\newtheorem{theorem}{Theorem}[section]
\newtheorem{prop}[theorem]{Proposition}
\newtheorem{lemma}[theorem]{Lemma}
\newtheorem{remark}[theorem]{Remark}
\newtheorem{definition}[theorem]{Definition}
\newcommand{\cC}{\mathcal{C}}
\newcommand{\V}{\mathcal{V}}
\newcommand{\E}{\mathcal{E}}
\newcommand{\R}{\mathbb{R}}
\newcommand{\ep}{\epsilon}
\newcommand{\PP}{\mathbb{P}}
\begin{document}

\title{Symplectic blowing down in dimension Six}

\author{Tian-Jun Li \& Yongbin Ruan \& Weiyi Zhang}
\address{School  of Mathematics\\  University of Minnesota\\ Minneapolis, MN 55455}
\email{tjli@math.umn.edu}
\address{Institute for Advanced Study in Mathematics\\East No.7 Building, Zhejiang University Zijingang Campus\\ Hangzhou, China}
\email{ruanyb@zju.edu.cn}
\address{Mathematics Institute\\ University of Warwick\\Coventry,
CV4 7AL, UK}\email{weiyi.zhang@warwick.ac.uk}

\maketitle

\begin{abstract} We establish a blowing down criterion in the context of birational symplectic geometry in dimension 6.
\end{abstract}

\tableofcontents
\section{Introduction}\label{contraction}

Symplectic blowing down is a   fundamental operation in birational symplectic geometry
(\cite{GS}, \cite{HLR}).
Two symplectic manifolds are birationally cobordant equivalent if 
one can be obtained from the other via a sequence of
symplectic blow ups, symplectic blow downs and integral deformations. 

Symplectic blowing up can always be performed along any symplectic submanifold  to get a new symplectic manifold with an exceptional  divisor (\cite{Gr}, \cite{GS}, \cite{McS}). On the other hand, except in dimension 4 (\cite{Mc}), 
it is poorly understood when blowing down  can be performed. In this note
we will attempt to  find     blowing down criteria in dimension 6. 
Blowing up in dimension 6 gives rise to either a
symplectic $\mathbb P^2$ with normal degree $-1$ or a symplectic
$\mathbb P^1-$bundle with normal degree $-1$ along the $\mathbb P^1-$fibers.  
The geometry of these  symplectic 4-manifolds are well understood (\cite{Mc}, \cite{LL1}, \cite{OO}, \cite{LM}). 
Our focus is whether such a symplectic  divisor always arises from a
symplectic blowing up. 

The case of $\mathbb P^2$ is simpler. We observe that  the uniqueness of  the symplectic structures by Gromov and Taubes, 
together with  the Weinstein neighborhood theorem, implies that  a
symplectic $\mathbb P^2$ divisor with normal degree $-1$ in a symplectic $6-$manifold can always be blown down
just as in the case of $\mathbb P^1$ with self-intersection $-1$ in
a symplectic $4$--manifold.

 The case of a $\mathbb P^1-$bundle over a
Riemann surface $\Sigma$ is   subtler. 
Topologically,  blowing down  can always be performed since it is the same as topologically  fiber summing  with the  
triple of a linear $\mathbb P^2-$bundle with  a $\mathbb P^1-$subbundle
over $\Sigma$ with opposite normal bundle and a complementary section.
However, unlike the $\mathbb P^2$ case, it is not clear that blowing down can always be performed symplectically in this case.
Symplectic blowing up generally involves a small neighborhood of $\Sigma$ which means that the resulting $\mathbb P^1-$bundle divisor has a small fiber area. 

We investigate this problem in the context of birational symplectic geometry in dimension 6 (which has been studied in  \cite{R1}, \cite{R2}, \cite{Hu}, \cite{tLR}, \cite{LR}, \cite{Vo}, \cite{Ti} etc). Specifically, we study  whether we can symplectically blow down 
up to an integral deformation to obtain a simpler birationally equivalent symplectic manifold. 
A nice feature is that cohomologous symplectic forms on such $4-$manifolds are also isotopic in this case. 
 Moreover, as every symplectic structure on such a
4--manifold is K\"ahler, we can apply algebro-geometric techniques
to solve this problem.

In Section 2 we investigate this problem in a general situation. 
Topologically the blowing up construction gives rise to a fibred codimension 2 submanifold as in the following definition.

\begin{definition}\label{smooth blowup}
Let  $D^{2n}\subset (M^{2n+2}, \omega)$  be a codimension 2 symplectic submanifold which admits  
 a linear $\mathbb P^{k}-$bundle structure $\pi:D^{2n}\to Y^{2n-2k}$ 
over an oriented $(2n-2k)-$manifold $Y$. $(D, \pi)$ is called a topological  exceptional divisor if (i) the  normal line bundle  $N_D$ is the tautological line bundle when restricted to the projective space fibers of $\pi$, and (ii) $\omega|_D$ is almost standard. 
\end{definition}

Here a linear $\PP^{k}-$bundle refers to  a projective bundle from a complex vector bundle of dimension $k+1$. So the structure group
is the linear group $GL(k+1, \mathbb C)$. And the fibers of such a bundle 
come with a homotopy class of almost complex structures and hence 
have a natural complex orientation and a line class $l$ in $H_2$. We use $l$ to specify the forward cone 
$$\cC_{\pi}(D)=\{u\in H^2(D, \R)| u^n>0, \langle u, l\rangle >0\}.$$
A fibred symplectic form on  a linear projective bundle  is called standard if it arises from the Sternberg-Weinstein universal construction as described in Section 2.1.
In particular, a standard form restrict to a multiple of the Fubini-Study form on each fiber. 
A fibred symplectic form is said to be almost standard  if it is 
 deformation 
 to a standard form via fibred forms.

We describe  the blowing up construction  in  several ways, via the $U(k)$ universal construction, birational cobordism and symplectic cut. From Definition \ref{blowup} we see that   symplectic exceptional divisor from blowing up   is a  topological  exceptional divisor.

The  symplectic cut description is useful to  study whether a topological exceptional divisor can be blown down up to deformation. 
Specifically it  introduces an auxiliary  $S^1-$equivariant linear symplectic projective space bundle triple (see Lemma \ref{triple}), which lies behind the following definition.

\begin{definition}\label{matching triple}
Given a topological  exceptional divisor $\pi: D^{2n}\to Y^{2n-2k}$ of $(M^{2n+2}, \omega)$, 
we define a matching  triple $(X, D', S; \Omega)$ to be a  linear  $(\mathbb P^{k+1}, \mathbb P^{k}, \mathbb P^0)$ bundle triple $(X, D', S)$ over $Y$ with $\Omega$  a symplectic form on $X$, satisfying the following conditions. 
\begin{enumerate}

\item $D'$ is diffeomorphic to $D$;

\item $e(N_{D'})=-e(N_{D})$;

\item  $\Omega|_{D'}$ is almost standard and matches with $\omega|_{D}$;

\item $\Omega$ is $S^1-$invariant with respect to a semi-free $S^1-$action with $D'$ and $S$ as the only fixed point sets.
\end{enumerate}
The triple is called weak  if  we assume the weaker condition 

\begin{center}
(3') $\Omega|_{D'}$ is almost standard and  $[\Omega|_{D'}]=[\omega|_{D}]$.
\end{center}
\end{definition}

Here is the main result in Section 2. 

\begin{prop} \label{fiber sum} Suppose $(M, \omega)$ has a topological  exceptional divisor $D$. 
Up to integral deformation, we can symplectically blow down $(M, \omega)$ along $D$ if  there is  a matching  triple $(X, D', S; \Omega)$. 
\end{prop}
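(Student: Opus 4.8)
My plan is to realize the blow-down as a symplectic fiber sum (normal connect sum) of $(M,\omega)$ with the matching triple $(X,\Omega)$ along the pair of diffeomorphic codimension-2 loci $D$ and $D'$, and then to use the semi-free $S^1$-action to certify that the resulting manifold is genuinely the symplectic blow-down, with $(M,\omega)$ recovered from it by a symplectic blow-up up to integral deformation.

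First I would set up the Gompf-type symplectic fiber sum. Conditions (1) and (2) of the matching triple supply a diffeomorphism $D\cong D'$ together with an orientation-reversing identification of the normal $S^1$-bundles $\partial\nu(D)\cong\partial\nu(D')$, since $e(N_{D'})=-e(N_D)$. Condition (3) provides the matching of the restricted symplectic forms, which is exactly what is needed to interpolate the two Weinstein-type neighborhood models across the gluing collar. Performing the normal connect sum then yields a closed symplectic manifold
\[
\hat M \;=\; \bigl(M\setminus\nu(D)\bigr)\;\cup_{\partial}\;\bigl(X\setminus\nu(D')\bigr),\qquad \hat\omega .
\]
Next I would identify the topology of $\hat M$. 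Since $X$ is a linear $\PP^{k+1}$-bundle over $Y$ and $D'$ is the $\PP^{k}$-subbundle playing the role of the hyperplane at infinity (one of the two $S^1$-fixed loci, the other being the section $S$), the complement $X\setminus\nu(D')$ is fiberwise $\PP^{k+1}\setminus\nu(\PP^k)$, a $\mathbb C^{k+1}$-disk bundle over $Y$ whose core section is $S\cong Y$. Hence $\hat M$ is obtained from $M$ by excising the exceptional $\PP^k$-bundle neighborhood of $D$ and filling in a disk-bundle neighborhood of $S$. Topologically this is precisely the blow-down of $M$ along $D$, with $S\cong Y$ as the resulting center.

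The key role of condition (4) is to upgrade this topological statement to a symplectic one. Via the symplectic cut picture underlying Lemma \ref{triple}, the semi-free $S^1$-action with fixed loci exactly $D'$ and $S$ exhibits $X$ as a symplectic birational cobordism interpolating between a standard neighborhood of $D'$ and a standard symplectic disk-bundle neighborhood of $S$. This forces the piece glued into $M$ to carry the standard blow-down normal form, so that $S=Y$ sits inside $\hat M$ as a symplectic submanifold with the correct symplectic normal bundle, and so that re-applying the blow-up construction of Definition \ref{blowup} to $(\hat M,\hat\omega)$ along $S$ recovers $(M,\omega)$ up to deformation. The phrase ``up to integral deformation'' enters here: the fiber Fubini--Study area dictated by the standard form on $X$ near $D'$ need not coincide with the blow-up weight of $\omega$ on the fibers of $D$, so I would use the almost-standard flexibility, namely deformation through fibred forms within the prescribed cohomology class, to absorb the mismatch before gluing.

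The step I expect to be the main obstacle is precisely this last one: controlling the symplectic form throughout the gluing collar so that the fiber sum produces the standard symplectic blow-down model rather than merely some arbitrary symplectic sum. Concretely this amounts to matching the Fubini--Study fiber areas on the two sides and then propagating that match across the entire cobordism using the $S^1$-reduction of Lemma \ref{triple}, and it is exactly what necessitates passing to an integral deformation of $\omega$.
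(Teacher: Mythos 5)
Your overall strategy---fiber summing $(M,\omega)$ with the triple along $D=D'$ and then using the semi-free $S^1$-action to certify that the result is the blow-down up to integral deformation---is exactly the paper's, and your first two steps (the Gompf sum using conditions (1)--(3), and the topological identification of $\hat M$ as the blow-down with center $S$) are correct. But your third step, which carries the entire content of the proposition, is asserted rather than proved: ``the semi-free $S^1$-action \dots forces the piece glued into $M$ to carry the standard blow-down normal form, so that \dots re-applying the blow-up construction \dots recovers $(M,\omega)$ up to deformation'' is precisely the statement requiring an argument. The paper supplies it in three concrete moves that are absent from your proposal: (a) it puts a Hamiltonian $S^1$-action on a tubular neighborhood $W$ of $D$ \emph{inside} $M$ via the universal construction, and glues it, using the equivariant Darboux--Weinstein theorem, to the given action on $X\setminus\nu(D')$, producing a Hamiltonian $S^1$-action with moment map $H$ on the open piece $X_0=W\#_{D=D'}X$ of $\hat M$, with $H(S)=0$ and the gluing locus at level $\tau$; (b) it observes that $(M,\omega)$ \emph{is} the symplectic cut of $\hat M$ at level $\tau$, while by Lemma \ref{cut=blowup} the cut at a small level $\epsilon<\tau$ is a genuine $\epsilon$-blow-up of $\hat M$ along $S$; (c) since the glued action is semi-free with $S$ as its only fixed locus in $X_0$, there are no critical values in $[\epsilon,\tau]$, so Lemma \ref{deformation} shows the two cuts are integral-deformation equivalent. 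Note in particular that your argument never extends the $S^1$-action into $M$ at all---condition (4) only provides an action on $X$---yet without that extension there is no moment map near the gluing region of $\hat M$ and no way to run any cut or deformation argument there.

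Separately, your diagnosis of where ``up to integral deformation'' enters is incorrect. You attribute it to a mismatch between the Fubini--Study fiber area of the standard form on $X$ near $D'$ and ``the blow-up weight of $\omega$ on the fibers of $D$''; but condition (3) of Definition \ref{matching triple} requires $\Omega|_{D'}$ to match $\omega|_{D}$, so the fiber areas on the two sides of the gluing agree by hypothesis and there is nothing of this kind to absorb. The deformation is needed for a different reason: $M$ is the cut of $\hat M$ at the level $\tau$ determined by the (possibly large, merely almost-standard) neighborhood of $D$, whereas the blow-up construction of Definition \ref{blowup} only produces cuts at small levels $\epsilon$ near $S$; sliding the cut level from $\tau$ down to $\epsilon$ through the free part of the action is exactly the integral deformation furnished by Lemma \ref{deformation}.
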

Thus we are led to search for such projective space bundle triples which match with $(M, D, \omega)$. 
 For  blowing up a surface in arbitrary dimension,
the auxiliary linear symplectic projective space bundle triple also  leads to a  ratio constraint.

\begin{theorem} \label{main} 
Suppose a symplectic divisor $\pi:D^{2n}\to \Sigma$ of $ (M^{2n+2}, \omega)$ arises from  symplectically    blowing up a symplectic surface $\Sigma$ 
in a symplectic manifold. Then

\begin{enumerate}[label=(\roman*)]
\item   
$c_1(N_D)$ satisfies
$$ \int_D (-1)^n c_1(N_D)^n=-\deg(N_\Sigma),$$

\item  the symplectic form $ \omega|_D$   satisfies the ratio bound
\begin{equation}\label {constraint}  
    \rho_{\pi} ([\omega|_D]) >
     \left\{ \begin{array}{ll} -\deg(N_{\Sigma}),  & \hbox{if $g(\Sigma)>0$}, \\
\max\{-\deg(N_{\Sigma}), \quad t_n(-\deg (N_{\Sigma}))\},       &\hbox{if $g(\Sigma)=0$}.\\
\end{array}
\right.
\end{equation} 
\end{enumerate}
\end{theorem}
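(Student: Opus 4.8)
My plan splits along the two parts, which are of quite different natures: (i) is a characteristic-number identity using only the projective bundle structure of $D$, whereas (ii) is a positivity constraint that I would extract from the almost-standardness of $\omega|_D$, in the spirit of the auxiliary $S^1$-equivariant bundle of Lemma~\ref{triple}.

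For (i), first I would identify $D$ with $\mathbb{P}(N_\Sigma)$, where $N_\Sigma$ is the rank-$n$ normal bundle of $\Sigma$. Condition (i) of Definition~\ref{smooth blowup} then reads $N_D=\mathcal{O}_{\mathbb{P}(N_\Sigma)}(-1)$, so that $c_1(N_D)=-\zeta$ with $\zeta=c_1(\mathcal{O}(1))$, and hence $(-1)^n c_1(N_D)^n=\zeta^n$. Since $\Sigma$ is a surface, $c_j(N_\Sigma)=0$ for $j\ge 2$, so the Grothendieck relation degenerates to $\zeta^n=-\pi^*c_1(N_\Sigma)\,\zeta^{n-1}$. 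Pushing forward by $\pi$ and using $\pi_*\zeta^{n-1}=1$ gives $\int_D\zeta^n=-\int_\Sigma c_1(N_\Sigma)=-\deg N_\Sigma$, which is (i). This step is routine.

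For (ii) the plan is to argue that the blow up forces $\omega|_D$ to be almost standard, and then to read the ratio bound off the Sternberg--Weinstein positivity. Writing $[\omega|_D]=a\zeta+b\,\pi^*f$ with $f$ the positive generator of $H^2(\Sigma)$, membership in $\cC_\pi(D)$ gives $a=\langle[\omega|_D],l\rangle>0$, and $\rho_\pi([\omega|_D])$ is the ratio $b/a$ in the normalization of Section~2. An almost standard form is a fibred deformation of a minimal-coupling form $\pi^*\omega_\Sigma+a\,\Phi$ for some unitary connection on $N_\Sigma$; its non-degeneracy forces the horizontal form $\pi^*\omega_\Sigma+a\,R$ to stay positive on $\Sigma$, where $R$ is the associated curvature term with $\int_\Sigma R=\deg N_\Sigma$ (the same degree appearing in (i)). Integrating this positivity over $\Sigma$ and dividing by the fibre area $a$ yields exactly $\rho_\pi([\omega|_D])>-\deg N_\Sigma$. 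This is the universal lower bound, valid for every genus.

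The sharpening in the genus-zero case is where I expect the real work to lie. Positivity of $\pi^*\omega_\Sigma+a\,R$ is a \emph{pointwise} condition, strictly stronger than its integral; the integral bound is attained exactly when $R$ can be made central, which happens precisely for polystable $N_\Sigma$ via a Hermitian--Einstein connection. For $g(\Sigma)>0$ such bundles exist, so no clean universal improvement of $-\deg N_\Sigma$ is available and that is the best one can state. For $g(\Sigma)=0$, however, every $N_\Sigma$ splits as $\bigoplus_i\mathcal{O}(a_i)$ with $\sum_i a_i=\deg N_\Sigma$ by Grothendieck, no rank-$\ge 2$ bundle is stable, and the curvature can never be balanced; the pointwise positivity over $\mathbb{P}^1$ then feels the most negative summands, and minimizing the resulting obstruction over the admissible balanced splittings produces the strictly larger toric threshold $t_n(-\deg N_\Sigma)$, giving the bound $\max\{-\deg N_\Sigma,\ t_n(-\deg N_\Sigma)\}$ of \eqref{constraint}. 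The principal obstacles are therefore the $g=0$ toric positivity computation establishing $t_n$ as the sharp constant, and checking that the minimal-coupling normalization indeed presents $a$ as the fibre area and $b/a$ as $\rho_\pi$, so that the positivity inequality transcribes verbatim into the stated ratio bound.
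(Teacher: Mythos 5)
Your part (i) is correct and coincides with the paper's computation: both come down to the defining relation of Chern classes for a projective bundle over a surface (the paper routes it through the cut space $\overline X^+$ and Lemmas \ref{Xi} and \ref{cohomology ring}; you use the Grothendieck relation on the projectivized normal bundle directly — the content is identical).

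Part (ii) has a genuine, structural gap. You try to extract the bound $\rho_{\pi}([\omega|_D])>-\deg N_\Sigma$ from almost-standardness of $\omega|_D$ alone, via non-degeneracy of a minimal-coupling form. This cannot work: by the paper's Proposition \ref{lower bound}, the infimum of $\rho_\pi$ over \emph{all} almost standard classes on $D$ is $0$ when $g(\Sigma)>0$ and $t_n(D)$ when $g(\Sigma)=0$, and these can be strictly smaller than $-\deg N_\Sigma$. Indeed, when $\deg N_\Sigma<0$ (e.g.\ blowing up a sphere with normal bundle $\mathcal O(-1)\oplus\mathcal O(-1)$, the flop example in the introduction, where $-\deg N_\Sigma=2$ but $t_2(D)=0$), there exist almost standard — even K\"ahler — forms on $D$ whose ratio violates the claimed bound. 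So no argument intrinsic to $(D,\omega|_D)$ can prove (ii); the inequality is a constraint imposed by the ambient blow-up structure. This is exactly what the paper's proof uses: by the symplectic cut description, $D=Z^-\cong Z^+\subset \overline X^+=\mathbb P_s(N_\Sigma\oplus\mathbb C)$, and the complementary \emph{symplectic section} $S=\mathbb P_s(0\oplus\mathbb C)$ of Lemma \ref{triple} represents the class $\eta$ of $Z^+$; positivity of $\omega^+$ on $S$ forces $\langle[\omega|_D],\eta\rangle>0$, and Lemma \ref{cubic} converts this pairing into $\rho_\pi([\omega|_D])>-\deg N_\Sigma$. You invoke Lemma \ref{triple} ``in spirit'' at the outset, but your actual argument never uses the section $S$, which is the whole point.

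Two further problems with the analytic mechanism you propose. First, non-degeneracy of a coupling form is a pointwise condition on the pairing of the moment map with the curvature, so it is governed by the extremal eigenvalue of the curvature, not by its trace; the quantity your positivity ``integrates to'' is therefore not $\deg N_\Sigma$ (this is precisely why, in the paper's Section 3, extremal rays of the curve cone are detected by quotient line bundles of minimal degree rather than by the total degree), and your normalization $\rho_\pi=b/a$ is also inconsistent with Lemma \ref{cubic}, which gives $\rho_\pi(a\tau+bF)=-\deg N_\Sigma+n\,b/a$. Second, an almost standard form is only a deformation of a standard one through fibred forms; such deformations keep fiberwise positivity but destroy any pointwise horizontal curvature positivity, so no such pointwise inequality passes to the almost standard class — and for the same reason your proposed genus-zero ``toric positivity'' sharpening would not survive deformation. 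The paper instead obtains the $t_n$ bound by a deformation-invariant argument: almost standard forms are deformation equivalent to K\"ahler ones (Lemma \ref{almost standard}), the section class $\eta$ of the model $\mathbb C^{k}\oplus\mathbb C(-1)^{n-k}$ over $\mathbb P^1$ has nonzero Gromov--Witten invariant (Lemma \ref{GW'}, via Qin--Ruan), hence every almost standard form pairs positively with $\eta$ (Lemma \ref{symplectic GW}), and Lemma \ref{cubic} again yields the ratio bound.
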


What is  essentially new is the ratio inequality \eqref{constraint}. We now explain the terms in this theorem. 
Here $t_n$ is the    function $$t_n:\mathbb Z\to \{0, 1,\cdots, n-1\}$$ by requiring $t_n(w)= w\pmod n.$ 
The degree function $\deg(E)$ of  a  symplectic vector bundle $E$ over $\Sigma$  is $\deg(E)=\int_\Sigma c_1(E)$. 
The ratio function $\rho_{\pi}$ on the forward cone $\cC_{\pi}(D)$ for a linear projective space bundle $\pi: D^{2n}\to \Sigma$ is defined by
\begin{equation}\label{ratio of a class} \rho_{\pi}(u)=\frac{\int_D u^n}{ \langle u, l\rangle ^n}\in \mathbb R^+.\end{equation}   
Note that the ratio $\rho_{\pi}(u)\in \mathbb R^+$   determines the ray of the class $u$ since $\rho_{\pi}(u)$  is scale invariant  and the rank of $H^2(D;\mathbb R)$ is $2$ in this case.

We introduce the following definition.

\begin{definition}\label{con blowup}
A topological  exceptional divisor $\pi:D\to \Sigma$ of $(M, \omega)$  is called  admissible  if  the  symplectic form $\omega|_D$ on $D$ 
satisfies the ratio bound  
\begin{equation} \label {ratio constraint} 
    \rho_{\pi} ([\omega|_D]) >
     \left\{ \begin{array}{ll}\alpha_{D, M},  & \hbox{if $g(\Sigma)>0$}, \\
\max\{\alpha_{D, M}, \quad t_n(\alpha_{D, M})\},      &\hbox{if $g(\Sigma)=0$}.\\
\end{array}
\right.
\end{equation} 
where $\alpha_{D, M}=\int_D (-1)^n c_1(N_D)^n$.
\end{definition}

A  symplectic exceptional divisor from blowing up a surface  is an admissible topological  exceptional divisor by Theorem \ref{main}.

\begin{theorem} \label{coh  matching}
Suppose $\pi:D\to \Sigma$  is an admissible topological  exceptional divisor of $(M, \omega)$.  Then there exists a weak matching $S^1-$equivariant triple. 
\end{theorem}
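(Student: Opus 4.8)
The plan is to construct the weak matching triple explicitly as a projectivized bundle equipped with a fibred $S^1$-invariant K\"ahler form, and to show that admissibility is exactly the positivity condition guaranteeing that such a form exists. First I would fix a complex structure on $\Sigma$ and realize the topological exceptional divisor holomorphically: since $\pi:D\to\Sigma$ is a linear $\mathbb P^{n-1}$-bundle whose normal bundle restricts to the tautological bundle on fibers, write $D\cong\mathbb P(V)$ for a rank-$n$ bundle $V$, so that $N_D=\mathcal O_{\mathbb P(V)}(-1)$ up to a pullback twist and $\zeta:=c_1(\mathcal O_{\mathbb P(V)}(1))$ satisfies $\int_D\zeta^{\,n}=-\deg V=\alpha_{D,M}$. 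Since $\omega|_D$ is almost standard, after a fibred deformation I may take it to be K\"ahler of class $a\zeta+b\,\pi^*\beta$ with $\beta$ the positive generator of $H^2(\Sigma)$; a direct computation then gives $\rho_\pi([\omega|_D])=\alpha_{D,M}+n\,b/a$, so the admissibility bound becomes a positivity statement about $b/a$.

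Next I would construct the ambient $\mathbb P^n$-bundle. Choosing a line bundle $L$ on $\Sigma$, set $X=\mathbb P(V\oplus L)$ with $D'=\mathbb P(V)$ the subbundle and $S=\mathbb P(L)$ the complementary section; the fibrewise $S^1$-action scaling the $L$-summand is semi-free with fixed loci exactly $D'$ and $S$, so conditions (1) and (4) hold automatically. The computation $N_{D'}=\mathcal O_{\mathbb P(V)}(1)\otimes\pi^*L$ shows that condition (2) relates $\deg L$ to the normal data, which over $\mathbb P^1$ is arranged using the freedom to replace $D'$ by any diffeomorphic model $\mathbb P(W)$ with $\deg W\equiv\deg V\pmod n$. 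I would then apply the Sternberg-Weinstein universal construction of Section 2.1 to produce an $S^1$-invariant fibred K\"ahler form $\Omega$ on $X$ whose moment map attains its extrema along $D'$ and $S$, with $[\Omega]=a\,\eta+b\,p^*\beta$ where $\eta=c_1(\mathcal O_X(1))$ and $p:X\to\Sigma$. Because $\eta|_{D'}=\zeta$, this gives $[\Omega|_{D'}]=a\zeta+b\,\pi^*\beta=[\omega|_D]$, which is (3').

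The crux is to show that the class $a\,\eta+b\,p^*\beta$ is genuinely K\"ahler---equivalently, that the universal construction yields a nondegenerate $\Omega$---precisely when admissibility holds, which reduces to a positivity analysis on the ruled variety $X\to\Sigma$. For $g(\Sigma)>0$ the bound $\rho_\pi([\omega|_D])>\alpha_{D,M}$ is equivalent to $b/a>0$, and a suitable choice of $L$ together with the flexibility in $V$ renders the relevant twisted bundle positive, so the class is K\"ahler. The delicate case is $g(\Sigma)=0$, where $V\oplus L$ splits into line bundles: ampleness of $a\,\eta+b\,p^*\beta$ forces positivity on every section, and since $\deg L$ must be an integer the achievable threshold is not $\alpha_{D,M}$ itself but the nearest admissible value controlled by $\alpha_{D,M}\bmod n$. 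Optimizing over the integral twists yields exactly the bound $\max\{\alpha_{D,M},\,t_n(\alpha_{D,M})\}$. I expect this integrality and optimization step over $\mathbb P^1$ to be the main obstacle, since it is where the ratio inequality must be matched sharply to the boundary of the K\"ahler cone rather than merely exploited as an open condition.
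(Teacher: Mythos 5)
Your overall architecture is the same as the paper's: model $D$ holomorphically, take the triple $K=\mathbb P_s(V\oplus\mathbb C)$, $D'=\mathbb P_s(V)$, $S=\mathbb P_s(\mathbb C)$ with the fibrewise semi-free $S^1$-action fixing $D'$ and $S$, and reduce everything to a K\"ahler-cone positivity statement with the genus dichotomy and the $t_n$ correction over $\mathbb P^1$. However, two of your steps have genuine gaps, and they are precisely the steps carrying the weight of the proof. First, you propose to produce the invariant form by "applying the Sternberg--Weinstein universal construction" in the prescribed class $a\eta+b\,p^*\beta$. This would fail: the universal construction is nondegenerate only when the fiber moment image lies in the small neighborhood $\mathcal W_A$ of Lemma \ref{W}, i.e.\ it only reaches classes whose fiber size is small relative to the base --- equivalently, classes of \emph{large} ratio. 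The classes one must realize here have ratio arbitrarily close to the admissibility threshold, exactly the regime the universal construction cannot reach, and your suggested equivalence "the universal construction yields a nondegenerate $\Omega$ iff the class is K\"ahler" is unjustified. The paper instead produces a genuinely K\"ahler form via Kleiman's criterion (Proposition \ref{relative ratio}) and then obtains $S^1$-invariance \emph{a posteriori} by averaging over the holomorphic $S^1$-action induced by the splitting $\mathcal V\oplus\mathcal O$, using convexity of the space of K\"ahler forms in a fixed class; almost-standardness of the restriction is then Lemma \ref{almost standard}.

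Second, the positivity analysis itself --- which you compress into "a suitable choice of $L$ together with the flexibility in $V$ renders the relevant twisted bundle positive" for $g>0$, and "positivity on every section" for $g=0$ --- is the actual content and cannot go through as stated. The bundle $\E=\mathcal V\oplus\mathcal O$ always has the degree-zero quotient $\mathcal O$, so $\mathcal O_{\mathbb P(\E)}(1)$ is never ample and no twist makes it so; what must be proved is that the curve cone of $\mathbb P(\E)$ is spanned by the line class and the section $S$, and by Lemmas \ref{multisection} and \ref{m-section pairing} the curve cone is governed by quotient line bundles of \emph{all} symmetric powers $s^m\E$, not just by sections. Controlling these requires the real inputs of the paper: for $g>0$ with $\alpha_{D,M}\le 0$, the existence of semi-stable bundles of any rank and degree (Seshadri, Atiyah) plus Hartshorne's theorem that symmetric powers of semi-stable bundles are semi-stable (Lemma \ref{sum}); for $\alpha_{D,M}\ge 0$ (any genus), a completely decomposable model with non-negative summands (Lemma \ref{completedec}, Lemma \ref{normal p>0 blow-down}); and for $g=0$ with $\alpha_{D,M}<0$, the model $\mathcal O(q)^{n-t}\oplus\mathcal O(q+1)^t$ (Lemma \ref{normal p<0 blow-down}). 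Note in particular that the $g>0$ case splits by the sign of $\alpha_{D,M}$ into two different constructions, a distinction absent from your outline; and contrary to your closing expectation, the $\mathbb P^1$ integrality step is handled by the same decomposable-bundle computation, while the semi-stability machinery for positive genus is at least as substantial an obstacle.
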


 To prove this result we need to understand the
symplectic cone of  a linear  $(\mathbb P^{k+1}, \mathbb P^{k}, \mathbb P^0)$ bundle triple over $\Sigma$. This is possible in the K\"ahler setting by the 
Kleiman's criterion.
  We are able to determine
the restricted K\"ahler cone for various complex structures coming
from holomorphic bundles of the form $\mathcal V\oplus \mathcal O$ over $\Sigma$ with $\mathcal V$ either semi-stable or decomposable.  
Another  nice feature in the K\"ahler setting is that we always have the required $S^1-$symmetry.

Along the way we determine the symplectic cone of an arbitrary linear projective bundle over a surface. 
\begin{prop} \label{almost standard cone}
For  a linear $\mathbb P^{n}$ bundle over a surface, 
\begin{enumerate}[label=(\roman*)]
\item  
 every almost standard symplectic form is cohomologous to a K\"ahler form,

\item  the symplectic cone is equal to the positive cone   when $n$ is odd and the surface is of positive genus. 
\end{enumerate}
\end{prop}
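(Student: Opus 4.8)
The plan is to reduce both statements to a determination of the Kähler cone of a projective bundle $\mathbb P(E)\to\Sigma$ and then to choose the holomorphic structure $E$ adaptively. Write $H^2(D;\R)=\langle \xi, f\rangle$, where $\xi$ restricts to the hyperplane class on the $\mathbb P^n$-fibres and $f=\pi^*[\mathrm{pt}]$, so that $\langle\xi,l\rangle=1$, $\langle f,l\rangle=0$, $f^2=0$, and the Grothendieck relation over a surface collapses to $\xi^{\,n+1}=d\,\xi^n f$ with $d=\deg E$ and $\int_D\xi^n f=1$. For $u=a\xi+bf$ one then computes $\int_D u^{\,n+1}=a^{\,n}\big(ad+(n+1)b\big)$, so the forward cone is $\cC_{\pi}(D)=\{a>0,\ ad+(n+1)b>0\}$ and $\rho_{\pi}(u)=d+(n+1)b/a$.

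For part (i) I would first locate the class of an almost standard form: nondegeneracy gives $\int_D[\omega]^{\,n+1}>0$ and positivity on the fibres gives $\langle[\omega],l\rangle>0$, so $[\omega]\in\cC_{\pi}(D)$, i.e. $\rho_{\pi}([\omega])>0$. I would then realise this class as a Kähler class by computing the restricted Kähler cone of $\mathbb P(E)$ by Kleiman's criterion: over a curve $\mathcal O_{\mathbb P(E)}(1)\otimes\pi^*L$ is nef iff $E\otimes L$ is nef iff $\deg L\ge -\mu_{\min}(E)$, whence the Kähler cone is $\{a>0,\ b/a>-\mu_{\min}(E)\}$, that is $\rho_{\pi}>(n+1)\big(\mu(E)-\mu_{\min}(E)\big)$. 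Taking $E$ semistable when $g(\Sigma)\ge 1$ (such bundles exist in every degree) makes the threshold $0$, so the Kähler cone is the entire forward cone and in particular contains $[\omega]$; when $g(\Sigma)=0$ one uses the balanced split bundle $\bigoplus\mathcal O(a_i)$, whose threshold is exactly $t_{n+1}(d)=d\bmod(n+1)$.

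For part (ii) the inclusion of the symplectic cone in the positive cone $\{u^{\,n+1}>0\}$ is immediate from $\int_D[\omega]^{\,n+1}>0$. For the reverse inclusion, when $g(\Sigma)>0$ part (i) with a semistable $E$ already exhibits every class of the forward component $\cC_{\pi}(D)=\{a>0,\ \rho_{\pi}>0\}$ as Kähler, hence symplectic. Since $n$ is odd, $\dim_{\mathbb C}D=n+1$ is even, so $-\omega$ is again a symplectic form of the same orientation; applying $u\mapsto -u$ to the forward component produces the opposite component $-\cC_{\pi}(D)=\{a<0,\ ad+(n+1)b<0\}$, and for $n$ odd these two components together exhaust $\{u^{\,n+1}>0\}$. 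Hence the symplectic cone equals the positive cone.

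The main obstacle is the Kähler-cone determination that drives both parts: the identification of the nef threshold with $\mu_{\min}(E)$ and the verification that a semistable $E$ in the prescribed degree class (available once $g\ge 1$) yields the full forward cone. This is precisely the Kleiman/stability input the paper isolates. A secondary subtlety, relevant only to part (i) when $g(\Sigma)=0$ and $(n+1)\nmid d$, is to check that an almost standard class satisfies the sharper bound $\rho_{\pi}([\omega])>t_{n+1}(d)$, so that it still lies in the balanced bundle's Kähler cone; here the precise cohomological output of the Sternberg–Weinstein construction, rather than mere membership in $\cC_{\pi}(D)$, must be used.
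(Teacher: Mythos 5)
Your overall strategy coincides with the paper's: determine K\"ahler cones of $\mathbb P(E)$ by Kleiman's criterion together with Hartshorne's nefness theorem, take $E$ semistable when $g(\Sigma)\geq 1$ (Seshadri, Atiyah) so that the K\"ahler cone is the whole forward cone, take the balanced split bundle when $g(\Sigma)=0$, and deduce part (ii) from the observation that $-\omega$ is a symplectic form inducing the same orientation when $n$ is odd, together with the fact that the positive cone is the union of the two components $\pm\cC_{\pi}(D)$. For $g(\Sigma)\geq 1$, and for part (ii), your argument is complete and is essentially the paper's route through Proposition \ref{stablecubic}, Lemma \ref{completedec} and Proposition \ref{lower bound}.

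The genuine gap is exactly the point you defer in your final sentence, and the mechanism you propose there would not close it. When $g(\Sigma)=0$ and $(n+1)\nmid d$, membership of $[\omega]$ in $\cC_{\pi}(D)$ is strictly weaker than membership in the K\"ahler cone of the balanced bundle (indeed no complex structure has K\"ahler cone equal to the forward cone in this case), so you must prove $\rho_{\pi}([\omega])>t_{n+1}(d)$ for \emph{every} almost standard form. This cannot be extracted from ``the precise cohomological output of the Sternberg--Weinstein construction'': an almost standard form is only required to be \emph{deformation} equivalent, through fibred forms, to a standard one, and deformations do not preserve cohomology classes --- a priori the class could wander anywhere in the forward cone, so no amount of bookkeeping of the classes produced by the construction constrains it. What rules out small ratios is a symplectic invariant that survives deformation, and this is the paper's actual argument (Lemmas \ref{almost standard}, \ref{GW'} and \ref{symplectic GW}): the space of almost standard forms is path connected and contains the K\"ahler forms (via McDuff's connectedness result for fibred forms over a $2$-dimensional base); the section class $\eta$ of the model $\mathbb C^{k}\oplus\mathbb C(-1)^{n+1-k}$ has a nonvanishing Gromov--Witten invariant, computed via Qin--Ruan's calculation for Fano projective bundles; and deformation invariance of GW invariants then forces $\langle[\omega],\eta\rangle>0$ for every almost standard $\omega$, which by Lemma \ref{cubic} is precisely the bound $\rho_{\pi}([\omega])>t_{n+1}(d)$. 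Without this pseudoholomorphic-curve input, your part (i) remains unproved exactly in the rational-base, non-divisible-degree case.
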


\begin{remark}
This proposition generalizes the result of \cite{Mc0} in  the case of  $\mathbb P^1-$bundles.
Cascini-Panov \cite{CP} noted that   the generic K\"ahler cone is smaller than the  symplectic cone  for one point blowup when the genus is 1. 
\end{remark}

Finally, we state our blowing-down criterion in dimension 6. 
In dimension 6 a symplectic exceptional divisor arising from blowing up is a topological exceptional divisor $\pi:D\to Y$
as in Definition \ref{smooth blowup}, where $Y$ is either a point or a surface $\Sigma$. 

When $Y$ is a point,     $D$ is a symplectic $\mathbb P^2$
embedded in $(M, \omega)$ whose  normal bundle $N_D$ has degree $-1$.
As already mentioned, 
since cohomologous  symplectic forms on $\mathbb P^2$ are diffeomorphic (\cite{T}),
$\omega|_D$ is standard and 
a neighborhood of $(D,\omega)$
is the same as the standard symplectic ball near the boundary.
So it can be symplectically blown down.
When $Y=\Sigma$, $D$ is a ruled surface.
Since cohomologous symplectic forms are also diffeomorphic for ruled surfaces (\cite{LM}), 
we have  by Proposition \ref{fiber sum} and Theorem \ref{coh matching}, 

\begin{prop} \label{dim 6} Let $(M, \omega)$ be a $6-$dimensional symplectic manifold
and $\pi:D^4\to Y$ a topological exceptional divisor of $(M, \omega)$. 
When $Y$ is a point, $(M, \omega)$  can be  
blown down along $D^4$ to $(M', \omega')$. 

When $Y=\Sigma$ and    $\pi: D^4\to \Sigma$  is admissible then  a  weak matching triple is a matching triple and hence, 
     up to integral deformation,
  $(M, \omega)$   
 can be 
 blown down along $D^4$ to $(M', \omega')$. 
 
 \end{prop}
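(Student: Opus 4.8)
The plan is to treat the two cases separately, reducing each to the uniqueness of symplectic forms on the relevant $4$-manifold together with the machinery already assembled in Section~2.

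For the case where $Y$ is a point, the divisor $D$ is a symplectically embedded $\mathbb{P}^2$ with normal bundle the tautological $\mathcal{O}(-1)$, so of degree $-1$. First I would note that $\omega|_D$ is a symplectic form on $\mathbb{P}^2$, whence by the Gromov--Taubes uniqueness theorem (\cite{T}) it is carried by a diffeomorphism to a multiple of the Fubini--Study form; in particular $\omega|_D$ is standard after a diffeomorphism. Since $N_D=\mathcal{O}(-1)$, the Weinstein neighborhood theorem then identifies a neighborhood of $(D,\omega|_D)$ with the standard neighborhood of the exceptional $\mathbb{P}^2$ inside the total space of $\mathcal{O}(-1)$, which is precisely the local model for blowing up a point in $\mathbb{C}^3$. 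Excising a neighborhood of $D$ and gluing in the corresponding standard symplectic ball produces $(M',\omega')$; this is the direct six-dimensional analogue of contracting a $(-1)$-sphere in a symplectic $4$-manifold, and no deformation is required.

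For the case $Y=\Sigma$, the divisor $D$ is a ruled surface, that is, a $\mathbb{P}^1$-bundle over $\Sigma$. Since $\pi:D\to\Sigma$ is admissible, Theorem~\ref{coh matching} supplies a weak matching $S^1$-equivariant triple $(X,D',S;\Omega)$, in which $D'$ is the $\mathbb{P}^1$-subbundle of the linear $(\mathbb{P}^2,\mathbb{P}^1,\mathbb{P}^0)$ triple over $\Sigma$, hence itself a ruled surface diffeomorphic to $D$. The only gap between this weak triple and a genuine matching triple is the strengthening of condition (3') to condition (3): the cohomological identity $[\Omega|_{D'}]=[\omega|_D]$ must be promoted to an actual symplectomorphic matching. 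To do this, let $\phi:D'\to D$ be the diffeomorphism of condition (1), chosen so that $\phi^*[\omega|_D]=[\Omega|_{D'}]$ as afforded by (3'). Then $\phi^*(\omega|_D)$ and $\Omega|_{D'}$ are two cohomologous, almost standard symplectic forms on the ruled surface $D'$. Invoking the fact (\cite{LM}) that cohomologous symplectic forms on a ruled surface are isotopic, there is an isotopy $\psi_t$ with $\psi_0=\mathrm{id}_{D'}$ and $\psi_1^*\phi^*(\omega|_D)=\Omega|_{D'}$; replacing $\phi$ by $\phi\circ\psi_1$ yields a diffeomorphism carrying $\omega|_D$ to $\Omega|_{D'}$, which is exactly the matching demanded by (3). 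Since this modification alters only the identification of $D'$ with $D$ and leaves the triple $(X,D',S;\Omega)$ untouched, conditions (1), (2), and in particular the $S^1$-invariance (4), are preserved. Thus the weak matching triple is a matching triple, and Proposition~\ref{fiber sum} then yields the blow down of $(M,\omega)$ along $D$ up to integral deformation.

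The main obstacle, and the feature that confines this argument to dimension $6$, is precisely the isotopy statement used in the upgrade step: it is the low-dimensional phenomenon that cohomologous symplectic forms on a ruled surface are isotopic, rather than merely deformation equivalent, that rigidifies a weak, purely cohomological matching into the genuine symplectic matching required by Proposition~\ref{fiber sum}. In higher dimensions the restricted divisor is no longer a surface and no such uniqueness is available, which is why only the weak matching triple of Theorem~\ref{coh matching} can be produced in general.
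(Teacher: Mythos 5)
Your proof is correct and follows essentially the same route as the paper: the $Y=\mathrm{point}$ case via Gromov--Taubes uniqueness on $\mathbb{P}^2$ plus the Weinstein neighborhood theorem, and the $Y=\Sigma$ case by using the LaLonde--McDuff result (\cite{LM}) that cohomologous symplectic forms on a ruled surface agree up to diffeomorphism/isotopy to promote the weak matching triple of Theorem~\ref{coh matching} to a genuine matching triple, then invoking Proposition~\ref{fiber sum}. Your explicit isotopy argument for upgrading condition (3') to (3) is simply a more detailed write-up of the step the paper performs implicitly.
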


 It would be interesting to investigate, when $Y=\Sigma$,  whether integral deformation is not really needed. 
However, note that the integral deformation is a birational equivalence so  the symplectic manifolds $(M, \omega)$ and $(M', \omega')$ are birational. 
So this blowing-down criterion in dimension $6$ is suitable in the context of symplectic birational geometry.

 In the case of $Y=\Sigma$,  since any $\mathbb P^1-$bundle over $\Sigma$  is linear and any symplectic form is fibred (Section 6.2 in \cite {McS}),  we in fact have the following more explicit and stronger formulation.

 \begin{theorem} \label{ruled} Let $(M, \omega)$ be a $6-$dimensional symplectic manifold and $D$ a codimension $2$ symplectic submanifold.
 Suppose $D$ admits a  $\mathbb P^1-$bundle structure $\pi:D\to \Sigma$   over a surface $\Sigma$ with $\langle c_1(N_D), l\rangle =-1$. 
Let $\alpha_{D, M}=c_1(N_D)\cdot c_1(N_D)$ and $\rho=\rho_{\pi}([\omega|_D])$.  

Suppose  $(D, \omega|_D)$ arises from   blowing up a surface.  Then $\rho \ne 2$ if   $D= S^2\times S^2$ with $\alpha_{D, M}=2$,  
and $\rho>\alpha_{D, M}$  otherwise.

Conversely,  if  $\rho>\alpha_{D, M}$,   
$(M, \omega)$ can be  
blown down  along $D$ up to deformation.   In particular, this is  the case if $\alpha_{D, M}\leq  0$.
Moreover, when  $D=S^2\times S^2$ with $\alpha_{D, M}=2$, $(M, \omega)$ can be 
blown down  along $D$ up to deformation as long as $\rho\ne 2$.
 \end{theorem}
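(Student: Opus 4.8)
The plan is to obtain Theorem \ref{ruled} as the dimension-six ($n=2$) incarnation of Theorem \ref{main} for necessity and of the admissibility criterion of Proposition \ref{dim 6} for sufficiency, with the only genuinely new bookkeeping occurring on $S^2\times S^2$. First I would record the relevant translations: since $n=2$ we have $\alpha_{D,M}=\int_D c_1(N_D)^2 = c_1(N_D)\cdot c_1(N_D)$, and $t_n=t_2$ is reduction mod $2$. Because every symplectic form on a ruled surface is fibred (Section 6.2 of \cite{McS}) and hence almost standard, the hypothesis that $\omega|_D$ is symplectic together with $\langle c_1(N_D), l\rangle = -1$ already makes $\pi:D\to\Sigma$ a topological exceptional divisor in the sense of Definition \ref{smooth blowup}. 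When $g(\Sigma)=0$, Theorem \ref{main}(i) gives $\alpha_{D,M}=-\deg(N_\Sigma)$, so the diffeomorphism type of $D$ is pinned down by the parity of $\alpha_{D,M}$: one has $D\cong S^2\times S^2$ when $\alpha_{D,M}$ is even, and $D$ is the nontrivial $\mathbb P^1$-bundle (the one-point blow-up of $\mathbb P^2$) when $\alpha_{D,M}$ is odd.

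For the necessity statement I would feed Theorem \ref{main} into this dictionary, noting first that away from the symmetric case below the condition $\langle c_1(N_D), l\rangle=-1$ singles out a unique ruling, which must then be the exceptional one, so $\rho=\rho_\pi$ is exactly the ratio to which Theorem \ref{main} applies. For $g(\Sigma)>0$ the ruling is unique and the bound reads $\rho>\alpha_{D,M}$ verbatim. For $g(\Sigma)=0$ I would evaluate $\max\{\alpha_{D,M}, t_2(\alpha_{D,M})\}$: when $\alpha_{D,M}\ge 1$ the first term dominates and again $\rho>\alpha_{D,M}$, while when $\alpha_{D,M}\le 0$ the bound $\rho>t_2(\alpha_{D,M})\ge 0\ge\alpha_{D,M}$ still yields $\rho>\alpha_{D,M}$. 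The one case where the conclusion must be weakened is $D=S^2\times S^2$ with $\alpha_{D,M}=2$: here $c_1(N_D)$ is symmetric in the two $S^2$ factors, both rulings satisfy $\langle c_1(N_D), l\rangle=-1$, and the given $\pi$ need not be the exceptional one. Since the two rulings $\pi,\pi'$ of $S^2\times S^2$ satisfy $\rho_\pi\cdot\rho_{\pi'}=4$, Theorem \ref{main} applied to the exceptional ruling forces one of the two ratios to exceed $2$, hence $\rho_\pi\ne 2$, though not necessarily $\rho_\pi>2$.

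For the converse I would show that $\rho>\alpha_{D,M}$ implies admissibility in the sense of Definition \ref{con blowup} and then quote Proposition \ref{dim 6}. For $g(\Sigma)>0$ admissibility is literally $\rho>\alpha_{D,M}$. For $g(\Sigma)=0$ I must in addition check $\rho>t_2(\alpha_{D,M})$: when $\alpha_{D,M}$ is even this is automatic since $t_2=0<\rho$, and when $\alpha_{D,M}$ is odd, so that $D$ is the nontrivial bundle, the symplectic cone computation (compare Proposition \ref{almost standard cone}) forces $\rho>1\ge t_2(\alpha_{D,M})$ for every class in the open symplectic cone, so the mod-$2$ term never binds. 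Thus $\rho>\alpha_{D,M}$ yields admissibility in every case, and Proposition \ref{dim 6} blows $D$ down up to deformation. The clause $\alpha_{D,M}\le 0$ then follows because $\rho>0\ge\alpha_{D,M}$ always holds.

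Finally, for the moreover clause ($D=S^2\times S^2$, $\alpha_{D,M}=2$, $\rho\ne 2$) I would again exploit the two rulings. If $\rho=\rho_\pi>2=\alpha_{D,M}$ we are in the case just treated. If instead $\rho_\pi<2$, then the partner ruling $\pi'$ obtained by swapping the factors again has $\langle c_1(N_D), l'\rangle=-1$ and satisfies $\rho_{\pi'}=4/\rho_\pi>2=\alpha_{D,M}$; since any symplectic form on $D$ is fibred with respect to $\pi'$ as well, $\pi'$ is an admissible topological exceptional divisor and Proposition \ref{dim 6} blows $D$ down along $\pi'$. I expect the main obstacle to be precisely this $S^2\times S^2$ bookkeeping, namely verifying that the normal-bundle symmetry at $\alpha_{D,M}=2$ is the only source of a second compatible ruling and that the open-cone inequality $\rho>1$ (which makes the $t_2$ term vacuous) holds, rather than any new hard analysis, since the heavy lifting is carried out by Theorem \ref{main}, Proposition \ref{dim 6}, and the cone description of Proposition \ref{almost standard cone}.
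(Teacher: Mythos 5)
Your proposal is correct and follows essentially the same route as the paper's proof: verify that $(D,\omega|_D)$ is a topological exceptional divisor via fibredness of forms on ruled surfaces, apply Theorem \ref{main} for necessity and admissibility plus Proposition \ref{dim 6} for sufficiency, and handle $S^2\times S^2$ with $\alpha_{D,M}=2$ by playing the two rulings against each other. Your identity $\rho_\pi\cdot\rho_{\pi'}=4$ is just a repackaging of the paper's explicit area computation ($2y/x$ versus $2x/y$), so the arguments coincide in substance.
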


When $D=S^2\times S^2$ and $\alpha_{D, M}=2$,   there are two rulings and 
the normal bundle $N_D$ has degree $-1$ along each ruling. 
The condition $\rho\ne 2$, just means  the symplectic areas of
the two rulings are not the same.  In this case,   up to deformation, $(M, \omega)$  can be blown down 
 along
the ruling  with smaller area. 
 This picture is consistent with
the flop operation for projective 3--folds. Let $D=S^2\times S^2$ be a divisor with normal $c_1=(-1, -1)$. If the fibers of the two rulings $S^2\times \{pt\}$ and $\{pt\} \times S^2$ have the same symplectic area and are not cohomologous, we can perturb the symplectic form such that the areas of two $S^2$ factors are different. A generic perturbation would work as the symplectic cone of the ambient manifold is open and we can perturb along any direction. Hence, in this situation, the divisor $D$ could also be blown down up to deformation. 
Simplest such example might be the toric blowup of the projective cone over $S^2 \times  S^2$ at the conic point.

In Section 2, we   review the blow up  process, establish the criterion Proposition \ref{fiber sum}.
In Section 3 we study the curve cone  and the K\"ahler cone of holomorphic projective bundles over a Riemann surface, and also prove Proposition \ref{almost standard cone}.
In Section 4 we   prove bound \eqref{constraint}, Theorem \ref{coh matching} and   Theorem \ref{ruled}. 

We thank Bob Gompf and Jianxun Hu for useful conversations. The first and second authors are grateful to the support of NSF, and the third author is grateful to the support of EPSRC during the preparation of the manuscript. 
We dedicate this paper to Professor Banghe Li on the occasion of his 80th birthday.

\section{Blowing up/down, symplectic cut and fiber sum}


\subsection{Blowing up via the $U(k)$ universal construction}

We first recall the Sternberg-Weinstein universal construction to obtain a canonical symplectic structures on the normal bundle 
of a symplectic submanifold. 
\subsubsection{The universal construction}
Let us first review the Sternberg-Weinstein universal construction.
Let $\pi:P\to X$ be a principal bundle with structure group $G$ over
a symplectic manifold $(X, \omega)$. If ${\mathfrak
g}$ denote the Lie algebra of $G$ and ${\mathfrak g}^*$ denotes the dual, the Sternberg-Weinstein universal construction produces a
$G-$invariant 
symplectic form on a neighborhood of $P\times \{0\}$ in $P\times {\mathfrak g}^*$.

The construction starts with   a connection on $P$, which is  a $G-$invariant $\mathfrak g-$valued $1-$form $A$ on $P$ corresponding to a $G-$invariant 
projection onto the vertical tangent bundle $VP$.  Equivalently, it is given by a  $G-$invariant complementary subbundle and, dually,   it induces an embedding of  $P\times {\mathfrak g}^*$ into
$T^*P$.
 Consider the  $1-$form on  $P\times {\mathfrak g}^*$ given by $\gamma\cdot A$ at $(p, \gamma)\in P\times {\mathfrak g}^*$, where
we use $\cdot$ to denote the pairing between $\mathfrak g$ and $\mathfrak g^*$.
Denote this $1-$form by $\gamma\cdot A$ as well. Notice that $d(\gamma\cdot A)$ is the
restriction of the canonical $2-$form on $T^*P$. Therefore
$d(\gamma\cdot A)$ is non-degenerate on the fibers of $P\times {\mathfrak
g}^*$.
The $2-$form  $$\omega_A=\pi^*\omega+d(\gamma\cdot A)$$ is 
called the coupling form of $A$. The $G-$action on $P\times \mathfrak g^*$
given by
$$g(p, \zeta)=(g^{-1}p, \hbox{Ad}(g)^*\zeta),$$
preserves $\gamma\cdot A$ and hence $\omega_A$.
Notice that at any point in $P\times \{0\}$, $\gamma=0$ and
$\omega_A$ is equal to $\pi^*\omega+d\gamma \wedge A$, hence it is
symplectic there.

\begin{lemma} \label{W} The form $\omega_A$ is
a symplectic form on $P\times {\mathcal W_A}$ for some $G-$invariant
neighborhood ${\mathcal W_A}$ of $0\in \mathfrak g^*$. The projection onto
${\mathcal W_A}$ is a moment map  on $P\times {\mathcal W_A}$.
\end{lemma}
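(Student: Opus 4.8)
The plan is to check that $\omega_A$ is closed and non-degenerate near the zero section, to extract a $G$-invariant neighborhood $\mathcal W_A$ by an openness argument, and then to verify the moment map identity by a Cartan-calculus computation. Closedness is immediate, since $d\omega_A=\pi^*d\omega+d^2(\gamma\cdot A)=0$ as $\omega$ is closed.

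For non-degeneracy I would argue pointwise along $P\times\{0\}$, where the coupling form reduces to $\pi^*\omega+d\gamma\wedge A$. Using the connection to split $T_pP=H_p\oplus V_p$ with $H_p\cong T_{\pi(p)}X$ and $V_p\cong\mathfrak g$, the tangent space is $T_{(p,0)}(P\times\mathfrak g^*)\cong T_{\pi(p)}X\oplus\mathfrak g\oplus\mathfrak g^*$. On $H_p$ the form restricts to $\omega$; on $V_p\oplus\mathfrak g^*$ the term $d\gamma\wedge A$ restricts to (minus) the canonical pairing of $\mathfrak g$ with $\mathfrak g^*$, since $A$ sends the fundamental field of $\mu\in\mathfrak g$ to $\mu$ and annihilates $H_p$, while $d\gamma$ reads off the $\mathfrak g^*$-coordinate. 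The mixed terms vanish — $\pi^*\omega$ kills vertical and $\mathfrak g^*$-directions, and $(d\gamma\wedge A)(h,\partial_\nu)=-\langle\nu,A(h)\rangle=0$ for horizontal $h$ — so $H_p$ is $\omega_A$-orthogonal to $V_p\oplus\mathfrak g^*$ and $\omega_A$ is non-degenerate at each point of the zero section.

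Since non-degeneracy is an open condition and $\omega_A$ is $G$-invariant with the zero section fixed by the action, I would produce $\mathcal W_A$ as follows. The symplectic locus is an open set $U\supset P\times\{0\}$; by compactness of $P$ (as $X$ and the structure group are compact in the cases of interest) the tube lemma yields a ball $B_\epsilon(0)\subset\mathfrak g^*$ with $P\times B_\epsilon(0)\subset U$. Choosing a $G$-invariant inner product on $\mathfrak g$ (available by averaging over the compact group) makes the metric balls in $\mathfrak g^*$ $\mathrm{Ad}^*$-invariant, so $\mathcal W_A=B_\epsilon(0)$ is a $G$-invariant neighborhood on which $\omega_A$ is symplectic.

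Finally, for the moment map I would show $\iota_{X_\xi}\omega_A=d\mu^\xi$ for each $\xi\in\mathfrak g$, where $X_\xi$ is the fundamental vector field and $\mu^\xi(p,\gamma)=\langle\gamma,\xi\rangle$ is the $\xi$-component of the projection. The $P$-component of $X_\xi$ is vertical, so $\pi_*X_\xi=0$ and $\iota_{X_\xi}\pi^*\omega=0$. For the coupling term, Cartan's formula together with the invariance $\mathcal L_{X_\xi}(\gamma\cdot A)=0$ gives $\iota_{X_\xi}d(\gamma\cdot A)=-d\bigl(\gamma\cdot A(X_\xi)\bigr)$, and since $A(X_\xi)=-\xi$ (with the $g^{-1}p$ convention of the action) this equals $d\langle\gamma,\xi\rangle=d\mu^\xi$; coadjoint equivariance is built into the definition $g\cdot(p,\zeta)=(g^{-1}p,\mathrm{Ad}(g)^*\zeta)$. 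I expect the only genuinely delicate points to be obtaining a uniform, $\mathrm{Ad}^*$-invariant $\mathcal W_A$ when $P$ fails to be compact and keeping the sign in $A(X_\xi)$ consistent with the action convention; both are bookkeeping rather than conceptual obstacles.
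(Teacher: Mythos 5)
Your proof is correct, but it takes a genuinely different route from the paper's. The paper does not verify anything by hand: it observes that the vertical bundle $VP$ is precisely the null distribution of the pre-symplectic form $\pi^*\omega$ on $P$, and then quotes the lemma as a special case of the coisotropic embedding theorem as explained in \cite{GS}. You instead give a direct, self-contained verification: closedness is immediate; non-degeneracy along $P\times\{0\}$ follows from the block decomposition $T_{\pi(p)}X\oplus\mathfrak g\oplus\mathfrak g^*$, where the mixed terms vanish because $A$ annihilates horizontal vectors and $d\gamma$ annihilates directions tangent to $P$; the product neighborhood comes from openness plus the tube lemma; and the moment map identity follows from Cartan's formula together with $G$-invariance of $\gamma\cdot A$ and $A(X_\xi)=-\xi$. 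What your computation buys is elementarity and transparency. What the paper's citation buys, beyond brevity, is the \emph{uniqueness} part of the coisotropic embedding theorem --- that $\omega_A$ near $P\times\{0\}$ is independent of the connection $A$ up to symplectomorphism --- a fact the paper invokes immediately after the lemma and which your pointwise verification does not produce. The two caveats you flag are real but harmless here: the uniform product neighborhood $P\times\mathcal W_A$ does require compactness of $P$ (otherwise the curvature term $\gamma\cdot dA$ can spoil non-degeneracy for arbitrarily small fixed $\gamma\neq 0$), and $\mathrm{Ad}^*$-invariance of $\mathcal W_A$ requires an invariant inner product; both hold in the paper's application, where $G=U(k)$ is compact and $P$ is a principal $U(k)$-bundle over a closed submanifold $Y$.
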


This lemma is well-known. Notice that the vertical bundle $VP$ is the
bundle of the null vectors of $\pi^*\omega$. So as explained in
\cite{GS}, the construction is just a special case of the
coisotropic embedding theorem. It follows from the uniqueness part
of the coisotropic embedding theorem that the symplectic structure
$\omega_A$ on $P\times {\mathcal W}_A$ near $P\times 0$ is
independent of $A$ up to symplectomorphisms.

More generally, if $(F, \omega_F)$ is a symplectic manifold with a
Hamiltonian $G$ action, we can form the associated   bundle
$P_F=P\times_G F$. Let $\mu_F:F\to \mathfrak g^*$ be a moment map.
Furthermore, assume that
\begin{equation} \label{W2}
\mu_F(F)\subset \mathcal W_A.
\end{equation}
Then there is a symplectic structure $\omega_{F, A}$ on $P_F$ which
restricts to $\omega_F$ on each fiber.

To construct  $\omega_{F, A}$ consider the $2-$form
$\omega_A+\omega_F$ on $P\times \mathfrak g^*\times F$. It is invariant
under the diagonal $G-$action and is symplectic on $P\times
{\mathcal W_A}\times F$.
The $G-$action is Hamiltonian with
$$\Gamma_{\mathcal W_A}=\pi_{\mathfrak g^*}+\mu_F:P\times {\mathcal W_A}\times F\to \mathfrak g^*$$
as a moment map.
Furthermore, by (\ref{W2}), for any $f\in F$, we have $\mu_F(f)\in
{\mathcal W_A}$, thus
$$\Gamma_{\mathcal W_A}^{-1}(0)=\{(p, -\mu_F(f), f)\}.$$
In particular, $\Gamma_{\mathcal W_A}^{-1}(0)$ is $G-$equivariantly
diffeomorphic to $P\times F$, and the symplectic reduction at $0$
yields the desired symplectic form $\omega_{F, A}$ on $P_F$.

In fact when $(F, \omega_F)=(TG^*, \omega_{can})$,
 then  $P\times \mathfrak g^*=P\times_G TG^*$.

\subsubsection{The local geometry of a submanifold}
We apply this construction to obtain canonical symplectic structures on small normal disk bundles 
of a symplectic submanifold. 
Suppose that $X$ is a closed symplectic manifold of dimension $2n$
and $Y\subset X$ is a symplectic submanifold of codimension $2k$.
The normal bundle $N_Y$ is a symplectic vector bundle, {\it i.e.} a bundle
with fiber $(\mathbb C^k, \tau)$. Here $\tau$ denotes the standard symplectic form on $\mathbb C^k$. Note that $\tau$ is $U(k)$ invariant. 
Picking a compatible
almost complex structure on $N_Y$, we then have an Hermitian bundle.
Let $P$ be the principal $U(k)$ bundle over $Y$.

Now pick a unitary connection $A$ for $P$, and let $\mathcal
W_A\subset u(k)^*$ be as in Lemma \ref{W}. Let
$D_{\epsilon_0}\subset \mathbb C^k$ be the closed
$\epsilon_0-$ball such that its image under  the moment map lies
inside $\mathcal W_A$.
Apply the universal construction to $P$ and  $D_{\epsilon_0}\subset
\mathbb C^k$, we get 
a $U(k)-$invariant  symplectic  form $\omega_{\epsilon_0, A}$ on
the disc bundle $N_Y(\epsilon_0)$ which restricts to $\tau$
on each fiber and restricts to $\omega|_Y$ on the zero section.

 By the symplectic neighborhood
theorem, and by possibly taking a smaller $\epsilon_0$, a tubular
neighborhood ${\mathcal N}_{\epsilon_0}(Y)$ of $Y$ in $X$ is
symplectomorphic to the disc bundle $N_Y(\epsilon_0)$ with the
symplectic form $\omega_{\epsilon_0, A}$. Let $\phi:({\mathcal
N}_{\epsilon_0}(Y), \omega)\to (N_Y(\epsilon_0),\omega_{\epsilon_0, A}) $ be such a symplectomorphism.

\subsubsection{Blowing up via the universal construction}
Let $i:BL\subset \mathbb P^{k-1}\times \mathbb C^k$ be the incidence relation. Then the projection $\alpha:BL\to \mathbb P^{k-1}$ makes $BL$ into the (holomorphic) tautological line 
bundle.  The map  $\beta:BL \to \mathbb C^{k}$ sending  each fiber of $\alpha$  into the corresponding  one dimensional subspace of $\mathbb C^{k}$ is a (holomorphic) bijection of the complement of
the zero section $BL_0$ of $BL$ with the complement of the origin in $\mathbb C^k$. 
In other words, $BL$ is the complex blowup of $\mathbb C^k$ at the origin. 

Let $\Omega$ be the standard $U(k)$ invariant symplectic form on $\mathbb P^{k-1}$ and recall that  $\tau$ is the standard symplectic form on $\mathbb C^k$. 
By Theorem 5.1 in \cite{GS}, for $\epsilon>0$, the form $\omega_{\epsilon}=i^*(\epsilon \, \hbox{pr}_1^*\Omega+\hbox{pr}_2^*\tau)$ defines    a $U(k)-$invariant symplectic structure  on $BL$. 
$(BL, \omega_{\epsilon})$ is the $\epsilon-$symplectic blowup of $(\mathbb C^k, \tau)$. 

Given $\epsilon_0>0$, let $BL_{\epsilon_0}$ be the inverse image of  the $\epsilon_0$ disk  in $\mathbb C^k$ under $\alpha$. 
Note that $(BL_{\epsilon_0}, \omega_{\epsilon})$ is a $U(k)$ symplectic manifold. 
Pick $\epsilon<\epsilon_0$ and apply the universal construction to the principal $U(k)$ bundle $P$ over $Y$ and $(F, \omega_F)=(BL_{\epsilon_0}, \omega_{\epsilon})$, the resulting symplectic manifold $\tilde N^{\epsilon}_Y(\epsilon_0)$ is the local $\epsilon-$symplectic blowup along $Y$. 
Note that the zero section $BL_0$ of $\alpha: BL\to \mathbb P^{k-1}$  is $U(k)-$invariant and so defines a codimension 2 submanifold $\tilde N^{\epsilon}_Y(0)\subset \tilde N^{\epsilon}_Y(\epsilon_0)$. 
By Theorem 5.4 in \cite{GS}, outside the zero section $BL_0$ of $BL$, the symplectic form $\omega_{\epsilon}$ is equivalent to the standard symplectic form $\tau$ on $\mathbb C^k-D_{\epsilon}$. 
In particular, $\tilde N^{\epsilon}_Y(\epsilon_0)\setminus \tilde N^{\epsilon}_Y(0)$ and $N_Y(\epsilon_0)\setminus Y$ are symplectomorphic.

Therefore we can extend the local $\epsilon-$symplectic blowup   via $\phi:({\mathcal
N}_{\epsilon_0}(Y), \omega)\to (N_Y(\epsilon_0),\omega_{\epsilon_0, A}) $
 to obtain the (global) $\epsilon-$symplectic blowup of $X$ along $Y$.

\begin{definition}\label{blowup}
Given $\epsilon, A, \phi$, we call 
$$\tilde X_{\epsilon, A, \phi}=(X-{\mathcal
N}_{\epsilon}(Y))\cup_{\phi}  \tilde N^{\epsilon}_Y(\epsilon_0) $$ 
the $(\epsilon, A, \phi)-$blowup of $X$ along $Y$.

The codimension $2$ divisor $\tilde N^{\epsilon}_Y(0) $ is called the (symplectic) exceptional divisor.
The exceptional divisor is a linear projective space bundle and the symplectic form on it is called a standard form. 
\end{definition}

\begin{remark} \label{standard} Notice that
the construction depends on $\epsilon$, the connection $A$ and the
symplectomorphism $\phi:{\mathcal N}_{\epsilon_0}(Y)\to
N_Y(\epsilon_0)$. However, as remarked on p. 250 in \cite{McS}, given
two different choices $A, \phi$ and $A', \phi'$, for sufficiently
small $\epsilon$, the resulting symplectic forms are isotopic.
We will often simply denote the blowup by $\tilde X$ ignoring various
choices.

\end{remark}

Observe that we can
define a map
\begin{equation}p:\tilde X\to X\label{p}
\end{equation}
 which is identity away from ${\mathcal N}_{\epsilon_0}(Y)$. Such
a map can be constructed by identifying ${\mathcal
N}_{\epsilon_0}(Y)-{\mathcal N}_{\epsilon}(Y)$ with the deleted
neighborhood ${\mathcal N}_{\epsilon_0}(Y)-Y$ using a diffeomorphism
from $(\epsilon, \epsilon_0)$ to $(0, \epsilon_0)$. Such a $p$ is
not unique, but the induced maps  $p_*$ and $p^*$ on homology and
cohomology are the same for different choices.

\subsection{Up to deformation via a simple birational cobordism}

Note that the symplectic blowing up construction is a local $U(k)$ equivariant construction. 
It is shown in \cite{GS} that there is an $S^1$ equivariant description. 

We first recall the notion of symplectic birational cobordism. This notion is based on symplectic cobordism introduced in \cite{GS} (also see \cite{HLR}). 
\begin{definition}
Two symplectic manifolds $(X, \omega)$ and $(X', \omega')$ are birational cobordant  if and only if there are finite number of symplectic manifolds $(X_i, \omega_i)$, $0\le i \le k$, with $(X_0, \omega_0)=(X, \omega)$ and $(X_{k+1}, \omega_{k+1})=(X', \omega')$, and for each $i$, $(X_i, \omega_i)$ and $(X_{i+1}, \omega_{i+1})$ are symplectomorphic to symplectic reductions of a semi-free Hamiltonian $S^1$ symplectic manifold $W_i$ (of two dimensions higher) at different regular  levels or  simple critical levels.
\end{definition}

Given a semi-free Hamiltonian $S^1$ manifold with moment map $\Phi$, 
a critical level of $\Phi$ is called simple if the corresponding set $W$ of critical points is connected and the signature of $\Phi$ is either $(2p, 2)$ or $(2, 2q)$ for some positive values of $p$ and $q$. It follows from Theorem 10.2 in \cite{GS} that the reduction at a simple critical value is still a smooth symplectic manifold. 
The change of the symplecto-diffeotype of  the reductions when passing through a simple critical value is called a simple symplectic birational cobordism. 
By Theorem 11.2 in \cite{GS}, the change of the symplecto-diffeotype of  the reductions when passing through a  critical value of signature $(2p, 2q)$  is the composition of two  simple symplectic birational cobordisms,  one of signature $(2p, 2)$ and the other of signature $(2, 2q)$. 

In Section 12 of \cite{GS},  for a symplectic submanifold $Y\subset (X, \omega)$, a semi-free Hamiltonian $S^1$ manifold $X_I$ with proper moment map $\Phi:X_I\to I$   is constructed,   where $0$ is a simple critical value of $\Phi$ with reduction $(X, \omega)$ and the reduction at small $\epsilon\geq 0$  is
the $\epsilon-$symplectic blowup of $(X, \omega)$ along $Y$ (including the case $\epsilon=0$). Further, Theorem 13.1 of \cite{GS} establishes the uniqueness of $X_I$ for a sufficiently small interval $I$ about $0$. 
In particular, for small $\epsilon$, symplectic blowing up/down is unique up to symplectomorphism.

Since the change of the symplecto-diffeotype of  the reductions when not passing through a  critical value is an integral deformation, we 
have the following conclusion: 
\begin{theorem} \cite{GS}
 Up to integral deformation, a simple birational cobordism is  the same as a symplectic blowing up/down. 
\end{theorem}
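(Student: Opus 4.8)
The plan is to prove the two implications separately and then absorb the choice of reduction level into the notion of integral deformation; throughout I assemble the structure theorems of \cite{GS} quoted above rather than reproving them.

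For the direction that a symplectic blow up (or blow down) is a simple birational cobordism, I would invoke directly the construction of Section 12 of \cite{GS}. Given a symplectic submanifold $Y\subset (X,\omega)$, that construction produces a semi-free Hamiltonian $S^1$-manifold $X_I$ with proper moment map $\Phi\colon X_I\to I$ for which $0$ is a simple critical value, the reduction $\Phi^{-1}(0)/S^1$ is $(X,\omega)$, and the reduction at a small level $\epsilon>0$ is the $\epsilon$-symplectic blow up of $(X,\omega)$ along $Y$ in the sense of Definition \ref{blowup}. Passing from the level $0$ to the level $\epsilon$ through the single simple critical value $0$ is by definition a simple birational cobordism whose two ends are $(X,\omega)$ and its $\epsilon$-blow up. Thus every symplectic blow up, and by reversing the moment map every symplectic blow down, is realized as a simple birational cobordism.

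For the converse, start from an abstract simple birational cobordism: a semi-free Hamiltonian $S^1$-manifold $W$ with a simple critical value $c$, connected critical set $F$, and Hessian signature $(2p,2)$ or $(2,2q)$. By the equivariant normal form near $F$, a neighborhood of $F$ in $W$ is $S^1$-equivariantly symplectomorphic to a neighborhood of the zero section in the total space of $E_+\oplus E_-\to F$, where $E_\pm$ are the weight $\pm1$ normal bundles and the simplicity hypothesis forces one of them to have complex rank $1$; up to the additive constant the moment map is $\Phi=c+\tfrac12(|v_+|^2-|v_-|^2)$. Reducing at the levels $c\pm\delta$ then yields an explicit local picture: on one side $F$ embeds as a symplectic submanifold whose symplectic normal bundle is the rank-$p$ (resp.\ rank-$q$) weight space, while on the other side $F$ is replaced by the projectivization of that same bundle, i.e.\ by a linear $\mathbb{P}^{p-1}$- (resp.\ $\mathbb{P}^{q-1}$-) bundle over $F$. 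Recognizing this replacement, together with its reduced symplectic form, as exactly the local $\epsilon$-symplectic blow up of Definition \ref{blowup}---with the blow up parameter $\epsilon$ determined by $\delta$---identifies the change of symplecto-diffeotype across $c$ with a symplectic blow up on one side and a symplectic blow down on the other. Theorem 13.1 of \cite{GS}, the uniqueness of the germ $X_I$ about a simple critical value, guarantees that this local identification is canonical and globalizes.

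It remains to insert the words ``up to integral deformation''. The reduced symplectic form at a regular level depends on the level, but as long as one does not cross a critical value the reductions are all diffeomorphic, and by Duistermaat--Heckman the reduced cohomology class varies affinely with derivative the (integral) Duistermaat--Heckman class; hence any two reductions on the same side of $c$ are related by an integral deformation. Consequently the blow up size $\epsilon$ above is well defined only up to integral deformation, and the two implications combine to the stated equivalence. I expect the main obstacle to be the identification in the third paragraph: verifying that the reduced symplectic form on the ``blown-up'' side of $c$ is genuinely the standard $\epsilon$-blow up form of Definition \ref{blowup}, and not merely a form on a diffeomorphic manifold. This amounts to matching the Sternberg--Weinstein coupling form used to define the symplectic blow up with the form inherited from the symplectic reduction of the local model $E_+\oplus E_-$, and tracking how the fiber scale of the exceptional $\mathbb{P}$-bundle depends on $\delta$; the parallel bookkeeping that converts ``not crossing a critical value'' into a controlled integral deformation is the other point that must be handled with care.
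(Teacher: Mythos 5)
Your proposal is correct and takes essentially the same route as the paper, which likewise establishes this theorem by assembling the same ingredients from \cite{GS}: the Section~12 construction of the model cobordism $X_I$ realizing the $\epsilon$-blowup as reductions near a simple critical value, the uniqueness Theorem~13.1 to identify an abstract simple birational cobordism with that model near its critical level, and the fact that changing the reduction level within a critical-value-free interval is an integral deformation. The only cosmetic differences are that you justify the integrality via Duistermaat--Heckman while the paper constructs the deformation explicitly (Lemma~\ref{deformation}), and that your worry about matching the reduced form with the standard blowup form is already dispatched by the very Theorem~13.1 you invoke.
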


This  motivates us to also describe a symplectic blowup up to integral deformation via symplectic cut. 

\subsection{Up to deformation from  symplectic cut} 
Now we apply the local $S^1$ equivariant symplectic cut construction by Lerman  to construct the blowup along $Y$. This observation was already mentioned in Remark 1.3 in \cite{Le}
when $(X, \omega)$ is globally Hamiltonian with $Y$ as the maximal submanifold. 

We provide some details in the general case to show that symplectic cutting a small neighborhood of $Y$ gives rise to a  simple birational cobordism. 
As pointed out in the introduction, this description  introduces a linear $(\mathbb P^k, \mathbb P^{k-1}, \mathbb P^0)-$bundle triple, which leads to a ratio constraint. 
Secondly, it leads to a  construction of the symplectic blowing down up to deformation  via normal connected sum.

\subsubsection{Symplectic cut}
 Suppose that $X_0\subset X$ is an open codimension
zero submanifold with a Hamiltonian $S^1-$action. Let $H:X_0\to
\mathbb R$ be a Hamiltonian function with a regular value $\epsilon$. If
$H^{-1}(\epsilon)$ is a separating hypersurface of $X_0$, then we obtain
two connected manifolds $X_0^{\pm}$ with boundary $\partial
X_0^{\pm}=H^{-1}(\epsilon)$, where the $+$ side corresponds to $H<\epsilon$.
Suppose further that $S^1$ acts freely on $H^{-1}(\epsilon)$. Then the
symplectic reduction $Z=H^{-1}(\epsilon)/S^1$ is canonically a symplectic
manifold of dimension $2$ less. Collapsing the $S^1-$action on
$\partial X^{\pm}=H^{-1}(\epsilon)$, we obtain closed smooth manifolds
$\overline{X}^{\pm} $ containing respectively real codimension $2$
submanifolds $Z^{\pm}=Z$ with opposite normal bundles. Furthermore
$\overline {X}^{\pm}$ admits a symplectic structure $\overline
\omega^{\pm}$ which agrees with the restriction of $\omega$ away
from $Z$, and whose restriction to $Z^{\pm}$ agrees with the
canonical symplectic structure $\omega_Z$ on $Z$ from symplectic
reduction. The pair of symplectic manifolds $(\overline {X}^{\pm},
\omega^{\pm})$ is called the symplectic cut of $X$ along
$H^{-1}(\epsilon)$ (\cite{Le}).

This is neatly shown by 
considering  the standard symplectic structure
$\sqrt {-1}dw\wedge d\bar w$  on  $\mathbb C$ and 
 two Hamiltonian actions of $S^1$ on 
 $X_0\times \mathbb C$ 
with  the  product symplectic structure, 
 where $S^1$ acts on $\mathbb C$ by complex
multiplication by $z$ and $z^{-1}$ respectivley.
The extended Hamiltonian is therefore $H_{\pm}=H \pm |w|^2$ and $(\overline {X}^{\pm}, \omega^{\pm})$ is the reduction at $\epsilon$ with respect to $H_{\pm}$.

\begin{lemma} \label{deformation}
Suppose the moment map image of the $S^1-$action on $X_0$ is $(c, d)$, $c< a< b<d$ and  there are no critical values in the interval $I=[a, b]$. 
Let $(K^t, \omega_t)$ be the symplectic cut on the $-$ side at  $t\in [a, b]$. 
Then  $(K^a, \omega_a)$ and $(K^b, \omega_b)$  are integral deformation equivalent.
\end{lemma}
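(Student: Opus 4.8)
The plan is to realize each cut space $K^t$ as a symplectic reduction of one fixed Hamiltonian $S^1$-manifold and then read off the conclusion from the Duistermaat--Heckman variation formula. Following the description preceding the lemma, form $(X_0\times\mathbb{C},\ \omega\oplus\sqrt{-1}\,dw\wedge d\bar w)$ with the anti-diagonal action $z\cdot(x,w)=(z\cdot x,\ z^{-1}w)$, whose moment map is $H_-=H-|w|^2$; then $(K^t,\omega_t)$ is precisely the reduction $(H_-)^{-1}(t)/S^1$. My first step is to verify that $[a,b]$ contains no critical value of $H_-$. Since $H_-$ is the moment map of a circle action, its critical points are the fixed points, and here the $S^1$-action on $\mathbb{C}\setminus\{0\}$ is free, so the fixed locus is $\mathrm{Fix}(X_0)\times\{0\}$; the corresponding critical values $H_-(x,0)=H(x)$ are exactly the critical values of $H$, none of which lie in $[a,b]$ by hypothesis.

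Next I would trivialize the family of level sets. Choosing an $S^1$-invariant metric on $X_0\times\mathbb{C}$ and renormalizing, so that $dH_-$ evaluates to $1$ on the resulting invariant gradient-like field, its time-$(t-a)$ flow carries $(H_-)^{-1}(a)$ equivariantly and diffeomorphically onto $(H_-)^{-1}(t)$ for all $t\in[a,b]$, precisely because there are no critical points of $H_-$ in this region. Passing to quotients yields diffeomorphisms $\Psi_t\colon K:=K^a\to K^t$, smooth in $t$, and hence a smooth family of symplectic forms $\widetilde\omega_t:=\Psi_t^*\omega_t$ on the fixed manifold $K$, with $\widetilde\omega_a=\omega_a$ and $\widetilde\omega_b$ symplectomorphic to $\omega_b$. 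Non-degeneracy of each $\widetilde\omega_t$ is automatic since every $t\in[a,b]$ is a regular value.

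The substantive input is then the Duistermaat--Heckman theorem, which says that the class varies linearly: $[\widetilde\omega_t]=[\omega_a]+(t-a)\,e$, where $e\in H^2(K;\mathbb{R})$ is the first Chern class of the principal $S^1$-bundle $(H_-)^{-1}(a)\to K$. As $e$ is the image of an integral class, the path $\{\widetilde\omega_t\}_{t\in[a,b]}$ is a deformation of symplectic forms whose cohomology class moves along the integral ray spanned by $e$ --- this is exactly what is meant by an integral deformation in the sense used before the lemma. Therefore $(K^a,\omega_a)$ and $(K^b,\omega_b)$ are integral deformation equivalent.

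I expect the only genuinely delicate points, beyond quoting the Duistermaat--Heckman formula, to be the hypotheses that make each $K^t$ a closed manifold: properness of $H$ on the relevant region, so that $(H_-)^{-1}(t)$ is compact and the flow in the previous step is complete, and freeness of the $S^1$-action on the level sets, so that the reductions are manifolds rather than orbifolds. Both are already built into the symplectic-cut setup of this section, so the remaining work is the standard invariant Moser/flow trivialization together with the computation of the class $[\widetilde\omega_t]$, which is where the integrality of $e$ enters.
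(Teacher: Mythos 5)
Your first step already breaks down in the situation the lemma is actually about. In this section the circle action is only \emph{local}: it is defined on the open set $X_0\subset X$ (a neighborhood of $Y$), not on all of $X$, while $K^t$ is the cut of the whole manifold $X$. The reduction $(H_-)^{-1}(t)/S^1$ of $X_0\times\mathbb{C}$ is only the cut of $X_0$; the genuine cut space $K^t$ is that piece glued to $X\setminus\{x\in X_0:\ H(x)<t\}$, and on the part of $K^t$ lying outside $X_0$ there is no circle action, no moment map, and hence no Duistermaat--Heckman theory. So ``realize each $K^t$ as a reduction of one fixed Hamiltonian $S^1$-manifold'' is only correct when $X_0=X$; that global case is exactly the one the paper disposes of in a single sentence ($\Phi^{-1}(I)/S^1$ is a cobordism with no critical levels), and your proposal is essentially a more detailed version of it. The content of the lemma is the local case.

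One could try to salvage your argument by running it on $X_0$ and extending by the identity over $X\setminus X_0$, but your trivialization does not permit this: the normalized gradient flow of $H_-$ displaces points throughout $X_0\times\mathbb{C}$, in particular near the frontier of $X_0$ in $X$, so the induced maps $\Psi_t$ are not the identity where $X_0$ meets the rest of $X$ and cannot be glued to the identity map to produce diffeomorphisms $K^a\to K^t$. Moreover, in the intended application $X_0$ is an open tubular neighborhood, so $(H_-)^{-1}(t)=\{(x,w):\ |w|^2=H(x)-t\}$ fibers over the non-compact set $\{H\ge t\}\subset X_0$ and is itself non-compact: completeness of your flow and the compact-setting Duistermaat--Heckman theorem are not ``built into the setup'' as you assert. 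The paper's proof is engineered around precisely these points: it uses the equivariant coisotropic embedding theorem to identify $H^{-1}([a-\epsilon,b])$ equivariantly with $P\times[a-\epsilon,b]$, chooses reparametrizations $\xi_t\colon[a-\epsilon,a]\to[a-\epsilon,t]$ equal to the identity near $a-\epsilon$, and thereby obtains $S^1$-equivariant diffeomorphisms $\Theta_t$ that are the identity outside $H^{-1}([a-\epsilon,d])$, descend to the cut spaces, and yield the deformation $\Theta_t^*\omega_t$ --- all supported near the cut hypersurface, so the untouched bulk $X\setminus X_0$ causes no trouble. Your Duistermaat--Heckman observation does capture why the deformation deserves to be called \emph{integral} (the class varies along the integral direction dual to the collapsed divisor), but it is applied to the wrong space and without the support control that makes the local case work.
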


\begin{proof}{proof}
In the special case that $(X, \omega)$ is   a global $S^1-$manifold, for $\Phi=H-|w|^2$, $\Phi^{-1}(I)/S^1$ is a cobordism with no critical levels. 
So we have an integral deformation. 

In the general local case, $H^{-1}([a-\ep, b])$ is equivariantly $P\times [a-\ep, b]$ by the equivariant version of
the coisotropic embedding theorem, where  $P$ is a principal $S^1-$bundle. 
Consider  a smooth family of diffeomorphisms  $\xi_t: [a-\ep, a]\to [a-\ep, t]$ for $t\in [a, b]$,
which is identity near $a-\ep$.

We use $\xi_t$ to define a family of $S^1-$equivariant diffeomorphisms $\Theta_t: X-H^{-1}((a, d))\to X-H^{-1}((t, d))$ such that 
(i) $\Theta_t$ is the  identity on $X- H^{-1}([a-\ep, d])$, and 
(ii)  $\Theta_t: (s, p)\mapsto (\xi_t(s), p)$ on $H^{-1}([a-\ep, a])$. 
Each  $\Theta_t$ descends to a  diffeomorphism from the $a-$cut to the $t-$cut, still denoted by $\Theta_t$. 
The family of symplectic forms $\Theta_t^* \omega_t$ is then the  desired deformation. 
\end{proof}

\subsubsection{Symplectic cutting  a standard neighborhood of $Y$}
The ${\epsilon_0}$ neighborhood ${\mathcal
N}_{\epsilon_0}(Y)$ of $Y$ carries a $U(k)$ action from the identification $\phi:{\mathcal N}_{\epsilon_0}(Y)\to
N_Y(\epsilon_0)$. 
Consider the Hamiltonian $S^1-$action on $$X_0={\mathcal
N}_{\epsilon_0}(Y)$$  which corresponds to  the complex multiplication on $N_Y(\epsilon_0)$.  The moment map
$H(u)=|\phi(u)|^2, u\in \mathcal N_{\epsilon_0}(Y),$
where $|\phi(u)|$ is the norm of $\phi(u)$ considered as a vector in
 a fiber of the Hermitian bundle $N_Y$. Here $X_0\times \mathbb C={\mathcal
N}_{\epsilon_0}(Y)\times \mathbb C$ is identified via $\phi$ with 
$${
N}_Y({\epsilon_0})\oplus {\mathbb C}.$$ 
Fix $\epsilon$ with
$0< \epsilon<\epsilon_0$ and  consider
 the hypersurface
$Q=H^{-1}(\epsilon)$ in $X_0$ corresponding to the sphere bundle of $N_Y$
with radius $\epsilon$. 
We cut $X$ along $Q$ to obtain two closed symplectic manifolds
$(\overline X^+, \omega^+)$ and $(\overline X^-, \omega^-)$, each containing a codimension 2 symplectic submanifold $Z^{\pm}=Q/S^1$. 

 $Z^{\pm}$ is diffeomorphic to the  projectivization $\mathbb P_s(N_Y)$ of $N_Y$. 
 It is important to remember that 
 $$(Z^+, \omega^+|_{Z^+})=(Z^-, \omega^-|_{Z^-}).$$

\subsubsection{The $\overline X^-$ side as blowup}
We apply the birational cobordism in \cite{GS} to  show that $\overline X^-$, which is constructed via  a local $S^1-$symmetry,  gives an alternative construction of  the blowup of $X$ along $Y$ (which  
uses a local $U(k)-$symmetry).

First of all,  $\overline X_0^-=H_-^{-1}(\epsilon)/S^1$ is the reduction at $\epsilon$ with respect to the Hamiltonian
  $$H_-(u, w)=|\phi(u)|^2-|w|^2.$$ 
Since  $H_-$ has only one simple critical value $0$ whose signature is $(2, 2k)$ which is connected when $Y$ is, we know $\overline X_0^-$ is the $\epsilon-$blowup of $X_0={\mathcal
N}_{\epsilon_0}(Y)$ along $Y$ by Theorem 13.1 of \cite{GS} (while the reduction at a small negative value is  ${\mathcal N}_{\epsilon_0}(Y)$ with a deformed form). 
The following observation  is essentially  pointed out in \cite{Le}. 

\begin{lemma} \label{cut=blowup} The symplectic manifold $(\overline X^-, \omega^-)$ via symplectic cut is the  $\epsilon_0-$blowup of $X$ along $Y$,
where  $Z^-$ is the exceptional divisor.
\end{lemma}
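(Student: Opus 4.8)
The plan is to exhibit an explicit symplectomorphism from $(\overline X^-, \omega^-)$ onto the blowup $\tilde X = \tilde X_{\epsilon, A, \phi}$ of Definition \ref{blowup} carrying the cut divisor $Z^-$ to the exceptional divisor $\tilde N^{\epsilon}_Y(0)$. The strategy is to observe that both manifolds coincide with $X$ outside the standard neighborhood $X_0 = \mathcal{N}_{\epsilon_0}(Y)$, to identify their modifications of $X_0$ using the local statement already established just above, and then to verify that these two identifications agree on the overlapping collar so that they patch to a global map.

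First I would record the local picture. The symplectic cut uses the Hamiltonian $S^1$-action, which lives only on $X_0$, so $\overline X^-$ differs from $X$ only inside $X_0$; restricting the cut to $X_0$ produces exactly $\overline X_0^-$, the $-$side cut of $X_0$. By Definition \ref{blowup} the blowup likewise leaves $X - \mathcal{N}_{\epsilon}(Y)$ untouched and replaces $\mathcal{N}_{\epsilon}(Y) \subset X_0$ by $\tilde N^{\epsilon}_Y(\epsilon_0)$. The paragraph preceding the lemma shows, via Theorem 13.1 of \cite{GS}, that $\overline X_0^-$ is the $\epsilon$-blowup of $X_0$ along $Y$, i.e. is symplectomorphic to the local blowup $\tilde N^{\epsilon}_Y(\epsilon_0)$ with $Z^-$ corresponding to $\tilde N^{\epsilon}_Y(0)$. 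Denote this symplectomorphism by $\Psi : \overline X_0^- \to \tilde N^{\epsilon}_Y(\epsilon_0)$.

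Next I would verify the gluing compatibility, which I expect to be the main obstacle. On $X - X_0$ both $\omega^-$ and the blowup form equal $\omega$, so nothing is needed there; the only issue is the collar $\mathcal{N}_{\epsilon_0}(Y) \setminus \mathcal{N}_{\epsilon}(Y)$ along which the local piece is attached. Away from the divisors this collar is, on the cut side, symplectomorphic to $(N_Y(\epsilon_0) \setminus N_Y(\epsilon), \omega_{\epsilon_0, A})$ because $\omega^-$ agrees with $\omega$ (hence with $\omega_{\epsilon_0, A}$ under $\phi$) off $Z^-$; and on the blowup side the same collar is identified with the same standard annular model, since $\tilde N^{\epsilon}_Y(\epsilon_0) \setminus \tilde N^{\epsilon}_Y(0)$ and $N_Y(\epsilon_0) \setminus Y$ are symplectomorphic. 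The difficulty is that the abstract symplectomorphism $\Psi$ furnished by Theorem 13.1 of \cite{GS} need not a priori equal the standard identification on this collar; one must normalize $\Psi$ so that, in the standard model under $\phi$, it is the identity near $\partial X_0 = H^{-1}(\epsilon_0)$ away from the divisor. I expect this normalization to follow from the uniqueness in the coisotropic (symplectic neighborhood) theorem together with a Moser argument on the collar, possibly after shrinking $\epsilon_0$.

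Finally, with $\Psi$ arranged to equal the identity on the collar, I would glue: define a global map equal to the identity on $X - X_0$ and equal to $\Psi$ on $\overline X_0^-$. The two definitions agree on the overlapping collar by the previous step, so they patch to a diffeomorphism $\overline X^- \to \tilde X$ that is symplectic on each member of the cover and sends $Z^-$ to $\tilde N^{\epsilon}_Y(0)$. Since the pulled-back form equals $\omega^-$ on each piece, the glued map is a symplectomorphism, proving that $(\overline X^-, \omega^-)$ is the blowup of $X$ along $Y$ with $Z^-$ as its exceptional divisor.
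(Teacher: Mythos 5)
Your overall strategy (both modifications live inside $X_0$, so identify the local pieces and glue) is natural, but it is not the paper's argument, and the step you yourself flag as ``the main obstacle'' is a genuine gap rather than a technicality. The symplectomorphism $\Psi:\overline X_0^-\to \tilde N_Y^{\epsilon}(\epsilon_0)$ coming from Theorem 13.1 of \cite{GS} is abstract: it comes from an equivariant identification of local cobordisms and carries no control whatsoever at the outer boundary $H^{-1}(\epsilon_0)$. The tools you invoke do not produce that control. The uniqueness part of the coisotropic embedding theorem governs germs of neighborhoods of the divisor (the \emph{inner} end), not behavior on the outer collar; and a Moser argument yields isotopies between cohomologous forms, not a way to replace a given symplectomorphism by one that is the identity on a prescribed open set. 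What you would actually need is to absorb the ``discrepancy'' map $\iota_2^{-1}\circ\Psi\circ\iota_1$ of the annulus bundle $\{\epsilon<H<\epsilon_0\}$ into a symplectomorphism of $\tilde N_Y^{\epsilon}(\epsilon_0)$; such a collar symplectomorphism need not extend over the divisor compactification, and nothing you cite rules this out. Without that, the two definitions of your global map need not agree on the overlap, and the gluing fails.

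The paper's proof is designed precisely to avoid this issue, and it glues one dimension up instead: it divides $X$ into $X-\mathcal N_{\epsilon'}(Y)$ and $X_0$, takes the trivial $S^1$-cobordism $I\times S^1\times (X-\mathcal N_{\epsilon'}(Y))$ over the first piece and the local symplectic-cut cobordism over $X_0$, and glues these \emph{cobordisms} along $I\times S^1\times(\mathcal N_{\epsilon_0}(Y)-\mathcal N_{\epsilon'}(Y))$ using the ``real blowing up'' trick of \cite{GS}. This produces a single global semi-free Hamiltonian $S^1$-manifold $(X_I,\Phi)$ with one simple critical value $0$, whose reductions at negative levels are $X$ and at positive levels are $\overline X^-$; the global uniqueness Theorem 13.1 of \cite{GS} then identifies $\overline X^-$ with the blowup all at once, with no collar-matching of symplectomorphisms ever required. (The paper's footnote --- that one is done immediately if $X_0=X$, as in Remark 1.3 of \cite{Le} --- is exactly an acknowledgment that the local-to-global passage is the whole content of the lemma.) If you want to salvage your direct approach, you would have to prove a rel-boundary version of the local identification, e.g.\ by showing that both compactifications of the annulus bundle are obtained by collapsing the $S^1$-orbits at the inner end, so that the identity of the annulus extends; that is a real argument, not a corollary of Moser plus coisotropic uniqueness.
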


\begin{proof}
We apply the construction in Section 12 in \cite{GS} to construct a global cobordism $(X_I, \Phi)$ with a simple critical value $0$. We divide $X$ as two open sets $X-\mathcal N_{\epsilon'}(Y)$ and $X_0$ with $\epsilon'<\epsilon_0$, construct birational cobordisms on each piece and glue them together. For the part $X-\mathcal N_{\epsilon'}(Y)$, we choose the trivial cobordism $I\times S^1\times (X-\mathcal N_{\epsilon'}(Y))$ with the circle action the translation on the $S^1$ factor. For the  $X_0$ piece, we use the   simple cobordism as  in the  paragraph above. 
\footnote{If $X_0$ can be taken to be $X$ then we are done. This is the case in Remark 1.3 in \cite{Le}. }

By applying the  ``real blowing up" trick  in \cite{GS} to our local symplectic cut cobordism over $X_0$,
 we can glue two birational cobordisms together along $I\times S^1\times (\mathcal N_{\epsilon_0}(Y)-\mathcal N_{\epsilon'}(Y))$ to get a global cobordism $(X_I, \Phi)$ with a simple critical value $0$, where reductions at negative values are $X$ and positive values are $\overline X^-$. Now the lemma follows from Theorem 13.1 of \cite{GS}. 
\end{proof}

\subsubsection{The $\overline X^+$ side}
Let us examine the $\overline X^+$ side. First of all, we have 
$\overline X^+=\overline X_0^+=H_+^{-1}(\epsilon)/S^1$ where $H_+(u, w)=|\phi(u)|^2+|w|^2$. Notice $H_-|_{{\mathcal N}_{\epsilon_0}(Y)\oplus 0}=H_+|_{{\mathcal N}_{\epsilon_0}(Y)\oplus 0}=H$.

\begin{lemma} \label{triple}
  $\overline X^+$ is diffeomorphic to the  projectivization $\mathbb P_s(N_Y\oplus {\mathbb C})$ of
$N_Y\oplus {\mathbb C}$. It has two symplectic submanifolds,  
 the codimension 2 submanifold $Z^+=\mathbb P_s(N_Y\oplus 0)$ and  a copy of  $Y$ which is the infinity  section ${\mathbb P}_s(0\oplus {\mathbb C})$.

$\bullet$ Via  the trivial $\mathbb C-$summand, there is an embedding $$N_Y\to \mathbb P_s(N_Y \oplus \mathbb C), \quad  v\to l(v, 1).$$
 Under this embedding,     the zero section $Y$ of $N_Y$ is mapped to $\mathbb P_s(0\oplus \mathbb C)=\{l(0, 1)\}$. 
Thus the normal bundle to the  section
$\mathbb P_s(\mathbb C)$ is  
$N_Y$. 

$\bullet$    $\overline X^+$ inherits a semi-free $S^1-$action  from $X$ with $Z^+$ and $Y$ as the fixed loci. 
This action lifts to the bundle $N_Y\oplus \mathbb C$, complex multiplication on $N_Y$ and trivial action on $\mathbb C$.

$\bullet$ The symplectic forms on $\overline X^+$ and the restriction to $Z^+$ are standard.
\end{lemma}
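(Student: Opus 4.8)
The plan is to carry out the reduction $H_+^{-1}(\epsilon)/S^1$ fiberwise over $Y$ and recognize the result as the linear projective bundle $\mathbb P_s(N_Y\oplus\mathbb C)$. First I would work in a single fiber: using the identification $\phi$, the fiber of $X_0\times\mathbb C$ over a point $y\in Y$ is $(\mathbb C^k,\tau)\oplus(\mathbb C,\sqrt{-1}dw\wedge d\bar w)$, on which the extended Hamiltonian reads $H_+(u,w)=|u|^2+|w|^2$ for the diagonal action $z\cdot(u,w)=(zu,zw)$ (complex multiplication on $N_Y$ with moment map $H$, and multiplication by $z$ on $\mathbb C$ with moment map $|w|^2$). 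The level set $\{|u|^2+|w|^2=\epsilon\}$ is $S^{2k+1}$ and its quotient by the Hopf action is $\mathbb P^k$, so each fiber reduces to $\mathbb P(\mathbb C^k\oplus\mathbb C)$. Because the cut neighborhood and the circle action are built $U(k)$-equivariantly out of $N_Y$ via the universal construction of Section 2.1, these fiberwise quotients assemble into $\mathbb P_s(N_Y\oplus\mathbb C)$ over $Y$, which gives the diffeomorphism.

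Second, I would identify the two distinguished submanifolds by reading off the coordinate loci in the reduction. The locus $\{w=0\}$ inside $H_+^{-1}(\epsilon)$ is exactly $Q=H^{-1}(\epsilon)$, so its quotient is $Z^+=Q/S^1=\mathbb P_s(N_Y\oplus 0)$, a codimension $2$ submanifold; the locus $\{u=0\}$ is $\{|w|^2=\epsilon\}$, whose quotient is the single point $\mathbb P_s(0\oplus\mathbb C)=\{l(0,1)\}$ in each fiber, hence a copy of $Y$. The affine chart $v\mapsto l(v,1)$ is the standard trivialization of $\mathbb P_s(N_Y\oplus\mathbb C)\setminus Z^+$ by $N_Y$; it is an $S^1$-equivariant diffeomorphism onto a neighborhood of the infinity section sending the zero section of $N_Y$ to $\{l(0,1)\}$, and differentiating along that section identifies the normal bundle of $\mathbb P_s(0\oplus\mathbb C)$ with $N_Y$.

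Third, for the symmetry I would use the residual circle: on $X_0\times\mathbb C$ the action $a\cdot(u,w)=(au,w)$, complex multiplication on $N_Y$ and trivial on $\mathbb C$, commutes with the diagonal reduction action and therefore descends to $\overline X^+$, acting by $l(u,w)\mapsto l(au,w)$. In the chart $l(u,1)$ this is $u\mapsto au$, which is free for $u\ne 0$ and trivial at $u=0$, while on $\{w=0\}$ it is trivial; hence the action is semi-free with fixed loci exactly $Z^+$ and the copy of $Y$, and it lifts to $N_Y\oplus\mathbb C$ as claimed.

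Finally, the reduced form is standard because it is the symplectic quotient of the coupling form of the Sternberg-Weinstein universal construction. Here I would identify the reduced structure with the universal-construction form $\omega_{F,A}$ for the fiber $(F,\omega_F)=(\mathbb P^k,\text{Fubini-Study})$ equipped with its Hamiltonian $U(k)$-action, so that it is fibred and restricts to a multiple of the Fubini-Study form on each fiber, and likewise on $Z^+=\mathbb P_s(N_Y)$. The main obstacle I anticipate is not the fiberwise reduction but this last point of global bookkeeping: verifying that the $U(k)$-equivariant reductions glue into the genuine linear bundle $\mathbb P_s(N_Y\oplus\mathbb C)$ carrying a truly fibred form, which is where the equivariance of the universal construction and the identification of the reduced form with a standard form must be used carefully.
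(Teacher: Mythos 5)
Your proposal is correct and coincides with what the paper intends: the paper's entire proof of this lemma is the single sentence ``All the statements are clear,'' and your argument --- fiberwise reduction of $H_+^{-1}(\epsilon)$ to $\mathbb P^k$, assembly of these quotients via the $U(k)$-equivariance of the universal construction, identification of the loci $\{w=0\}$ and $\{u=0\}$ with $Z^+$ and the infinity section, descent of the residual circle action, and recognition of the reduced form as the Sternberg--Weinstein form $\omega_{F,A}$ with fiber $(\mathbb P^k,\omega_{FS})$ --- is precisely the routine verification being suppressed. The ``global bookkeeping'' you flag as the main obstacle is handled exactly as you anticipate: since the fiberwise $S^1$ and the structure-group $U(k)$ actions commute, reduction in stages identifies the cut form with the universal-construction form, which is the paper's definition of standard.
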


All the statements are clear.

The symplectic section $\mathbb P_s(\mathbb C)=\mathbb P_s(0\oplus \mathbb C)$ imposes constraint on the possible symplectic structures on the $\mathbb P^{n-1}-$bundle $Z^-$. 
In the next section we search for constraints in the case that $Y$ is a $2-$manifold.

\subsubsection{Symplectic cutting a general neighborhood}
Suppose $W$ is a neighborhood of $Y$ containing the  $U(k)$ neighborhood 
${\mathcal
N}_{\epsilon_0}(Y)$ and $W$ has a Hamiltonian  $S^1$ action with  the Hamiltonian function $H$  extending $|\phi(u)|^2$. 

\begin{lemma} \label{bu deformation}
If  $\alpha\geq \epsilon_0>\epsilon$ is in the interval $H(W)$ and we cut the $\alpha$ neighborhood of $Y$, then 

$\bullet$ $(\overline X^-, \omega^-)$ is deformation to the $\epsilon-$blowup.

$\bullet$  The symplectic form on $Z^{\pm}$ is almost standard. 

$\bullet$ $(\overline X^+, Z^+, \mathbb P_s(\mathbb C); \omega^+)$ is a matching triple of $(\overline X^-, Z^-)$.

\end{lemma}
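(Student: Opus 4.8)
The plan is to treat the three bullets in order, the first two carrying the real content and the third being an assembly of structures already recorded in Lemma \ref{triple} and in the symplectic cut construction. For the first bullet, I would observe that the extended Hamiltonian $H$ on $W$ is semi-free with $Y$ as its only fixed locus, sitting at the level $H=0$; hence $0$ is the only critical value and the whole interval $[\epsilon,\alpha]$ lies in the free, regular region $(0,\max H)$. The equivariant coisotropic embedding theorem applied on a slightly enlarged interval $[\epsilon-\delta,\alpha]$ then furnishes the product structure $H^{-1}([\epsilon-\delta,\alpha])\cong P\times[\epsilon-\delta,\alpha]$ demanded by Lemma \ref{deformation}. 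That lemma shows the $\alpha$-cut and the $\epsilon$-cut on the $-$ side are integral deformation equivalent; combining this with Lemma \ref{cut=blowup}, which identifies the $\epsilon$-cut on the $-$ side with a small blowup of $X$ along $Y$, and with Remark \ref{standard}, which says blowups with sufficiently small parameters are deformation equivalent, gives that $(\overline X^-,\omega^-)$ is a deformation of the $\epsilon$-blowup.

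For the second bullet, I would use the product identification just obtained to view all the reduced spaces $Z(t)=H^{-1}(t)/S^1$, $t\in[\epsilon,\alpha]$, as a single fixed manifold diffeomorphic to $\mathbb P_s(N_Y)$, and the reduced forms $\omega_t$ as a smooth family on it; by Duistermaat--Heckman the classes $[\omega_t]$ vary affinely in $t$. Because the $S^1$-action is the fiberwise complex multiplication near $Y$, the reduction respects the projection $\mathbb P_s(N_Y)\to Y$, so each $\omega_t$ restricts to a symplectic form on the $\mathbb P^{k-1}$-fibers and is therefore fibred. At $t=\epsilon\le\epsilon_0$, where $H=|\phi|^2$ coincides with the $U(k)$ universal model, $\omega_\epsilon$ is exactly the standard form on the exceptional divisor. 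Thus $\{\omega_t\}_{t\in[\epsilon,\alpha]}$ is a deformation through fibred forms from a standard form to $\omega_\alpha$, which is precisely the definition of almost standard; the same holds on $Z^+$ in view of the recorded identity $(Z^+,\omega^+|_{Z^+})=(Z^-,\omega^-|_{Z^-})$.

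For the third bullet, I would verify the four conditions of Definition \ref{matching triple} for $(\overline X^+,Z^+,\mathbb P_s(\mathbb C);\omega^+)$ against $(\overline X^-,Z^-)$. Conditions (1) and (2)---that $Z^+\cong Z^-\cong\mathbb P_s(N_Y)$ and that their normal Euler classes are opposite---are immediate from the symplectic cut, where $Z^\pm$ are the same reduced space carrying opposite normal bundles. Condition (3) follows from the identity $(Z^+,\omega^+|_{Z^+})=(Z^-,\omega^-|_{Z^-})$ together with the almost-standardness established above. Condition (4) is the content of Lemma \ref{triple}, whose description of $\overline X^+\cong\mathbb P_s(N_Y\oplus\mathbb C)$ as carrying a semi-free $S^1$-action with fixed loci exactly $Z^+=\mathbb P_s(N_Y\oplus 0)$ and the infinity section $\mathbb P_s(\mathbb C)$ is independent of the cut level in the regular range, with $\omega^+$ being $S^1$-invariant throughout.

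The step I expect to be the main obstacle is the second bullet. For cut levels $t>\epsilon_0$ we leave the $U(k)$-standard neighborhood $\mathcal N_{\epsilon_0}(Y)$, where only the weaker local $S^1$-symmetry rather than the full $U(k)$ universal model is available, so one must argue that the reduction still yields a fibred form and that the family $\{\omega_t\}$ does not leave the class of fibred forms as $t$ crosses $\epsilon_0$. The resolution is to note that freeness of the $S^1$-action throughout $(0,\max H)$ keeps the projection to $Y$ intact under reduction, and that the fibers stay symplectic by continuity and the absence of critical values on $[\epsilon,\alpha]$; once this is in hand, almost-standardness is immediate from the definition and the remaining conditions are bookkeeping.
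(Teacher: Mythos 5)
Your proposal follows the paper's own proof: the first bullet is exactly Lemma \ref{deformation} (combined with Lemma \ref{cut=blowup} to identify the $\epsilon$-cut with the $\epsilon$-blowup), and the remaining bullets amount to checking the conditions of Definition \ref{matching triple} against Lemma \ref{triple}, which is precisely what the paper does. Your treatment of the second bullet (the family of reduced forms $\omega_t$ interpolating from the standard form at level $\epsilon$) is in fact more detailed than the paper's, which asserts these conditions hold ``clearly''; the one soft spot --- that ``continuity'' gives fibredness of $\omega_t$ only for $t$ near $\epsilon$ rather than on all of $[\epsilon,\alpha]$ --- is a point the paper glosses over as well, so it does not distinguish your argument from the original.
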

\begin{proof}
It follows from Lemma \ref{deformation} that $\overline X^-$ is deformation to the $\epsilon-$blowup.

Recall   from Definition \ref{matching triple} that 
a matching   triple of $(\overline X^-, Z^-)$ is a linear   $(\mathbb P^{k}, \mathbb P^{k-1}, \mathbb P^0)-$bundle triple  $(K, D', S; \Omega)$ over $Y$ satisfying 
\begin{enumerate}

\item $D'$ is diffeomorphic to $Z^-$;

\item $e(N_{D'})=-e(N_{Z^-})$;

\item $\Omega|_{D'}$ is almost standard and matches with $\omega^-|_{Z^-}$.

\item $\Omega$ is $S^1-$invariant with respect to a semi-free  $S^1-$action, where $D'$ and $S$ are  exactly the fixed loci. 
\end{enumerate}
By Lemma \ref{triple} the triple $(\overline X^+, Z^+, \mathbb P_s(\mathbb C); \omega^+)$  clearly satisfies these conditions. 
\end{proof}

\subsection{Blowing down  via the $S^1-$equivariant  fiber sum}
Blowing  down is the inverse operation of blowing up. 
Suppose $(M, \omega)$ has a topological  exceptional divisor $\pi:D\to Y$. 
We will state a criterion to   blow down $D$ symplectically up to deformation using normal connected sum.

Given two symplectic manifolds containing symplectomorphic
codimension 2 symplectic submanifolds with opposite normal bundles,
the normal connected sum operation in \cite{G} and \cite{MW}  produces a new symplectic
manifold by identifying the tubular neighborhoods. As pointed out in \cite{Le}, the normal connected sum operation  or the
fiber sum operation is the inverse operation of the symplectic cut.

Notice that we can apply the normal connected sum operation to the
pairs
$(\overline
X^{\pm}, \omega^{\pm}|Z^{\pm})$ to recover 
$$(X, \omega)=(\overline
X^+, \omega^+)\#_{Z^+=Z^-}(\overline X^-, 
\omega^-).$$
A matching triple of $(\overline
X^{-}, Z^{-})$ also satisfy  
the conditions to perform a symplectic sum with $(\overline
X^{-}, Z^{-})$. 
Topologically, the new manifold obtained is the blow down of $M$. Moreover, we have the following counterpart of Lemma \ref{bu deformation}.

\begin{prop} \label{fiber sum'} Suppose $(M, \omega)$ has a topological  exceptional divisor $D$. 
We can symplectically blow down $(M, \omega)$ along $D$ up to integral deformation if there is  a matching  triple $(K, D', S; \Omega)$. 
\end{prop}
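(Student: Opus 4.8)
The plan is to realize the symplectic blow-down as a normal connected sum and then identify the result, up to integral deformation, with the $\epsilon$-symplectic blow-down of $(M,\omega)$ along $D$. The starting observation is that a matching triple $(K, D', S; \Omega)$ supplies exactly the gluing data required by the normal connected sum construction of \cite{G}, \cite{MW}: condition (1) gives a diffeomorphism $D' \cong D$, condition (2) guarantees that the normal bundles $N_{D'}$ and $N_D$ are opposite, and condition (3) ensures that the restricted symplectic forms $\Omega|_{D'}$ and $\omega|_D$ match, so that after rescaling the normal directions the Weinstein neighborhoods of $D'$ in $K$ and of $D$ in $M$ can be glued along their boundaries. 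First I would therefore form the symplectic normal connected sum
\begin{equation*}
(M', \omega') = (M, \omega)\, \#_{D = D'}\, (K, \Omega),
\end{equation*}
which is a closed symplectic manifold by the cited construction. Topologically $M'$ is the blow-down of $M$ along $D$, as already remarked in the excerpt.

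The content is then to check that $(M', \omega')$ is, up to integral deformation, genuinely the symplectic blow-down. My approach is to run the symplectic cut / birational cobordism machinery of Section 2.3 in reverse, exploiting condition (4). Because $\Omega$ is $S^1$-invariant for a semi-free action with $D'$ and $S$ as the only fixed loci, the glued manifold $M'$ carries a \emph{local} $S^1$-action near the sum region, and the normal connected sum is precisely the inverse of a symplectic cut, as noted following Lemma \ref{triple} (citing \cite{Le}). I would thus assemble a semi-free Hamiltonian $S^1$-manifold $(W, \Phi)$, of two dimensions higher, by gluing the cobordism coming from the $K$-side to the trivial cobordism over the complement of a neighborhood of $D$ in $M$, using the ``real blowing up'' gluing trick exactly as in the proof of Lemma \ref{cut=blowup}. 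By construction $\Phi$ has a single simple critical value whose reduction on one side recovers $(M, \omega)$ with $D$ as exceptional divisor and on the other side recovers $(M', \omega')$.

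Having built $(W, \Phi)$, I would invoke the results already available: the reduction across a simple critical value realizes a simple symplectic birational cobordism, and by the theorem of \cite{GS} quoted in the excerpt, up to integral deformation a simple birational cobordism is the same as a symplectic blowing up/down. Combined with Lemma \ref{deformation}, which lets me move between nearby regular reduction levels at the cost of only an integral deformation, this identifies $(M', \omega')$ with the $\epsilon$-blow-down of $(M, \omega)$ along $D$ up to integral deformation. This is essentially the statement to be proved; indeed the whole argument is the formal mirror image of Lemma \ref{bu deformation}, read in the blow-down direction, so once the cobordism $W$ is in place the conclusion is immediate.

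The main obstacle I anticipate is the gluing step that produces the global cobordism $W$ with a single simple critical value. The matching triple only provides a \emph{local} semi-free $S^1$-symmetry near $D'$ in $K$, while on the bulk of $M$ away from $D$ there is no symmetry at all; reconciling these requires matching the $S^1$-equivariant collar of the cut hypersurface on the $K$-side with a trivial cobordism over $M \setminus \mathcal{N}(D)$ and checking that the resulting $\Phi$ is proper with exactly one simple critical level. Condition (4) (the semi-freeness and the precise identification of the fixed loci as $D'$ and $S$) is what makes the critical value simple, with signature of the expected form, so that Theorem 13.1 of \cite{GS} applies; verifying that these hypotheses survive the gluing, and that the matching of $\Omega|_{D'}$ with $\omega|_D$ is compatible with the equivariant collar structure, is the delicate point. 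Everything downstream of the construction of $W$ is a direct appeal to the cited uniqueness and deformation results.
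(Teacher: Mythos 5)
Your proposal is correct and is essentially the paper's own argument: both form the normal connected sum $M\#_{D=D'}K$, use condition (4) to obtain a Hamiltonian $S^1$-structure on the glued region (the paper does this on the open piece $X_0=W\#_{D=D'}K$ via the equivariant Darboux-Weinstein theorem), recognize $(M,\omega)$ as the symplectic cut at the level $\tau=H(D')$ and the genuine $\epsilon$-blowup of $(X,S)$ as the cut at a small level near $S$, and conclude with Lemma \ref{deformation}. The only difference is packaging: you rebuild the global $S^1$-cobordism with the real-blowing-up gluing trick inline, whereas the paper simply cites Lemma \ref{cut=blowup} for that step and phrases everything as cuts of $X$ at the two levels.
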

\begin{proof}
Let $X=M\#_{D=D'} K$ be the normal connected  sum of $(M, \omega)$ and $(K, \Omega)$ along $D=D'$. 

By the universal construction we can  choose a tubular neighborhood $W=\mathcal N_{\epsilon}(D)$ of the symplectic submanifold $D$ in $(M, \omega)$
with a semi-free Hamiltonian $S^1-$action, which is  complex multiplication when identifying $W$ with a normal disk bundle over $D$. In particular, $D$ is fixed by this $S^1-$action. 
By the equivariant Darboux-Weinstein Theorem, 
we can glue the two Hamiltonian $S^1-$actions on $K$ and $W$ to get an $S^1-$action  on the open piece $X_0=W\#_{D=D'} K$ of $X$.
Let $H$ be the moment map on $K$ with value $0$ at $S$ on $K$ and $\tau=H(D')$. 
We extend $H$ to $X_0$.

 Pick  a standard $\epsilon_0$-neighborhood of $S$ in $K$, which of course lies in $X$.  
For some $\epsilon <\epsilon_0$, perform the symplectic cut of $X$  along $H^{-1}(\epsilon)$ to get $(\tilde X_1, \tilde D_1)$. Then $(\tilde X_1, \tilde D_1)$ is the symplectic $\epsilon-$blowup of  $(X, S)$ by Lemma \ref{cut=blowup}. 
Note that $M$ is the cut at $\tau$ and $\tilde X_1$ is the cut at $\epsilon$. 
By Lemma \ref{deformation}, $M$ and $\tilde X_1$ are (integral) deformation from the $S^1$-action on $X_0$. 
\end{proof}

When $Y$ is a $2-$manifold, we will show that a necessary condition for the existence of a matching triple is that $D$ being admissible. 
And when $D$ is admissible, we will construct a  weak matching triple using K\"ahler geometry. 
Moreover, in dimension 6, weak matching triples are actually matching triples.

\section{Symplectic  geometry of projective bundles over a surface}
In this section let $\Sigma$ be a closed, oriented $2-$manifold.

\subsection{Topology of  linear projective bundles $\mathbb P_s(V)$ over a surface}

We introduce the topological type and the normal type for a  linear $\mathbb P^{n-1}-$bundle over $\Sigma$, which is the projectivization 
of a  rank $n$ bundle $V$. 
Over $\Sigma$ the 1st Chern class of a complex vector  bundle $V$ can be identified as an integer, the degree $\deg(V)=\int_{\Sigma} c_1(V)$. 
Another feature is that   $V$ can be decomposed as the direct sum of line bundles. 
Especially, every bundle admits a holomorphic structure.

We use $\mathbb C(i)$ to denote the  topological  complex line bundle with degree $i$. 
Up to  twisting by a line bundle,  every rank $n$ complex vector bundle is of the form 
$V= \mathbb C^{k} \oplus \mathbb C(-1)^{n-k}, 1\leq k\leq n$.
\subsubsection{Topological and normal types}


First observe that  $$\pi_1(PGL(n, \mathbb C))=\pi_1(PU(n))=\mathbb Z_n$$ since   $SU(n)$ is   $n-$fold cover of   $PU(n)$. 
So there are $n$   topologically  distinct  linear $\mathbb P^{n-1}-$bundles over $\Sigma$. 
Since tensoring a line bundle does not change the
projective bundle, we see topological  linear $\mathbb P^{n-1}-$bundles are classified by  $-\deg(V) \pmod n$.

\begin{definition} \label{type}
Suppose $D_V=\mathbb P_s(V)$ is a  linear $\mathbb P^{n-1}-$bundle modeled on a rank $n$ complex vector bundle $V$. 
The topological type of $D_V$   is $$t_n(D_V)=t_n(-\deg V) \in \{0, \cdots, n-1\}$$ i.e. 
 the smallest non-negative integer  satisfying
$$t_n(D_V)\equiv -\deg(V) \pmod n.$$ 
We also have the integer valued  `normal type' of $D_V$, 
$$N_n(D_V)=-\deg(V)\in \mathbb Z.$$
\end{definition}

\subsubsection{Cohomology and homology of $D_V$} 

Suppose $D_V$ is a   linear $\mathbb P^{n-1}-$bundle modeled on $V$.
Consider  the tautological line subbundle  $\Xi\subset \pi^*V$ over $D_V$ and its dual bundle $\Xi^*$.  Here are a few properties of $\Xi$. 

\begin{lemma} \label{Xi}
 $\Xi$ depends on $V$. If we change $V$ by a line bundle $L$ over $\Sigma$, then $\Xi$ is changed to $\Xi\otimes \pi^*L$. 

\begin{enumerate}

\item If $V$ is a line bundle, then  $D_V=\Sigma$ and $\Xi=V$. 

\item  For a subbundle $T\subset V$, $\Xi|_{\mathbb P_s(T)}\subset \pi^*T|_{P_s(T)}$ is the tautological line subbundle over $P_s(T)$. 

\item  $\Xi^*$ is  the normal bundle of $D_V=\mathbb P_s(V)$ in $\mathbb P_s(V\oplus \mathbb C)$. In fact, $\mathbb P_s(V\oplus \mathbb C) \setminus \mathbb P_s(\mathbb C)$
is the total space of $\Xi^*$. Its fiberwise picture is just $\mathbb P^n-{0}$ is biholomorphic to the total space of $\mathcal O(1)$ over $\mathbb P^{n-1}$.

\end{enumerate}
\end{lemma}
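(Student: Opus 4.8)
The plan is to treat the preamble and the three enumerated items in turn, since each is essentially a direct unwinding of the definition of the tautological line subbundle $\Xi$, whose fiber at a point $(x,\ell)\in\mathbb{P}_s(V)$ --- with $\ell\subset V_x$ a line --- is the line $\ell$ itself, viewed inside $(\pi^*V)_{(x,\ell)}=V_x$. For the preamble I would use the canonical identification $\mathbb{P}_s(V\otimes L)=\mathbb{P}_s(V)$ given by $(x,\ell)\mapsto(x,\ell\otimes L_x)$, which holds because tensoring by a line bundle is a fiberwise linear isomorphism and hence carries lines to lines. Under this identification the tautological fiber of $V\otimes L$ at $(x,\ell\otimes L_x)$ is $\ell\otimes L_x$, which is precisely $\Xi_{(x,\ell)}\otimes(\pi^*L)_{(x,\ell)}$; this gives $\Xi_{V\otimes L}=\Xi_V\otimes\pi^*L$.

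Item (1) is immediate: if $\mathrm{rank}\,V=1$ the only line in each fiber $V_x$ is $V_x$ itself, so $\pi$ is a diffeomorphism $D_V\to\Sigma$ and $\Xi_x=V_x$. Item (2) is equally immediate: $\mathbb{P}_s(T)\subset\mathbb{P}_s(V)$ consists of those $(x,\ell)$ with $\ell\subset T_x$, and for such a point the fiber of $\Xi$ is $\ell\subset T_x=(\pi^*T)_{(x,\ell)}$, which is by definition the tautological subbundle of $T$ over $\mathbb{P}_s(T)$.

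The substance is item (3), and I would obtain both of its assertions at once by exhibiting the promised biholomorphism. Consider the bundle projection $q:V\oplus\mathbb{C}\to V$. A line $L\subset V_x\oplus\mathbb{C}$ fails to inject into $V_x$ under $q$ exactly when $L=0\oplus\mathbb{C}$, i.e. when $(x,L)\in\mathbb{P}_s(\mathbb{C})$; away from $\mathbb{P}_s(\mathbb{C})$ the assignment $L\mapsto q(L)=:\ell$ is therefore well defined and sends $\mathbb{P}_s(V\oplus\mathbb{C})\setminus\mathbb{P}_s(\mathbb{C})$ to $D_V=\mathbb{P}_s(V)$. Over a fixed $(x,\ell)$, a line $L$ projecting onto $\ell$ is precisely the graph $\{(v,\phi(v)):v\in\ell\}$ of a unique homomorphism $\phi\in\mathrm{Hom}(\ell,\mathbb{C})=\ell^{*}=\Xi^{*}_{(x,\ell)}$. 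This graph construction is holomorphic and identifies $\mathbb{P}_s(V\oplus\mathbb{C})\setminus\mathbb{P}_s(\mathbb{C})$ with the total space of $\Xi^{*}$, the value $\phi=0$ (i.e. $L=\ell\oplus 0$) corresponding to the zero section, which is exactly $D_V=\mathbb{P}_s(V\oplus 0)$. Since the normal bundle of the zero section of a vector bundle is the bundle itself, the normal bundle of $D_V$ in $\mathbb{P}_s(V\oplus\mathbb{C})$ is $\Xi^{*}$; restricting to a single fiber recovers the model statement that $\mathbb{P}^n\setminus\{pt\}$ is the total space of $\mathcal{O}(1)$ over $\mathbb{P}^{n-1}$.

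As a consistency check I would also compute the normal bundle infinitesimally: the vertical tangent space of $\mathbb{P}_s(V\oplus\mathbb{C})$ at a line $\ell\subset V_x$ is $\mathrm{Hom}(\ell,(V_x\oplus\mathbb{C})/\ell)$, which contains the vertical tangent space $\mathrm{Hom}(\ell,V_x/\ell)$ of $\mathbb{P}_s(V)$, and the quotient is $\mathrm{Hom}(\ell,(V_x\oplus\mathbb{C})/V_x)=\mathrm{Hom}(\ell,\mathbb{C})=\Xi^{*}_{(x,\ell)}$; as $\mathbb{P}_s(V)$ and $\mathbb{P}_s(V\oplus\mathbb{C})$ share the same horizontal directions over $\Sigma$, the normal bundle is purely vertical and equals $\Xi^{*}$. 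None of these steps presents a genuine obstacle; the only point requiring care is the duality bookkeeping --- confirming that the graph description produces $\mathrm{Hom}(\ell,\mathbb{C})=\ell^{*}$ rather than $\ell$, so that the answer is $\Xi^{*}$ and the fiberwise model is $\mathcal{O}(1)$ rather than $\mathcal{O}(-1)$ --- together with checking that the fiberwise identifications assemble into global holomorphic isomorphisms, which holds because all the maps involved ($q$, the graph construction, and the tensoring isomorphism) are induced by bundle maps over $\Sigma$.
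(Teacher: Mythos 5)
Your proof is correct and follows essentially the route the paper intends: the paper states this lemma without proof as a collection of standard facts, with the only indicated argument for item (3) being precisely the identification of $\mathbb P_s(V\oplus \mathbb C)\setminus \mathbb P_s(\mathbb C)$ with the total space of $\Xi^*$ that you flesh out via the graph construction. Your careful handling of the duality (obtaining $\mathrm{Hom}(\ell,\mathbb C)=\ell^*$, hence $\Xi^*$ and the fiberwise $\mathcal O(1)$ rather than $\mathcal O(-1)$) is exactly the point the paper's sketch relies on.
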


The dual line bundle $\Xi^*$ is called the hyperplane line bundle.  Set $$\tau=c_1(\Xi^*).$$
Since $\tau$ restricts to a ring generator of
the de Rham cohomology of the fiber, 
by the Leray-Hirsch principle,  the de Rham cohomology group $H^*(D_V;\mathbb R)$  is the tensor product of 
$H^*(\mathbb P^{n-1};\mathbb R)$ and $H^*(\Sigma;\mathbb R)$. 
As an $H^*(\Sigma;\mathbb R)-$algebra, $H^*(D_V;\mathbb R)$ is generated by $\tau$  subject to  the  relation 
\begin{equation}\label{Bott-Tu}  \sum   \,\, \pi^*c_j(V)\, \tau^{n-j}=0.
\end{equation}
In fact, this is the defining relation of Chern classes (see {\it e.g.} \cite{BT}).

Let
$F$ denote the homology class of the fiber, as well as its Poincare dual in $H^2(D_V;\mathbb Z)$. 
Let $l$ be the homology class of a line in the fiber.   Then clearly 
$$\langle  F, l \rangle =0,  \quad 
\langle \tau, l\rangle =1.$$

\begin{lemma}\label{cohomology ring} 
Suppose $V$ is a rank $n\geq 2$ vector bundle over $\Sigma$ with degree $d$. Then $H^2(D_V;\mathbb R)$ is $2$ dimensional with $\{F, \tau\}$ as a basis. 
 And the  even cohomology ring structure is described by 
$$\tau^{n-1}\cdot F=(\tau|_F)^{n-1}=1, \quad F\cdot F=0, \quad  \tau^n=-\int_{\Sigma} c_1(V)=-d.$$
There exists a unique degree $2$ integral homology class  $\eta$ such that $\langle \tau, \eta \rangle=0$ and $\langle F, \eta\rangle =1$. 
In particular,    $l$ and $\eta$ are an integral  basis of $H_2(D_V; \mathbb Z)$. 

If $s$ is a section of a line subbundle $L$, then $$\langle \tau, s\rangle =-\deg(L).$$ 
\end{lemma}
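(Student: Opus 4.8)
The plan is to read everything off from the Leray--Hirsch decomposition $H^*(D_V)\cong H^*(\Sigma)\otimes H^*(\mathbb{P}^{n-1})$ together with the relation \eqref{Bott-Tu}. First I would settle the rank statement: since $\tau$ restricts to a generator of $H^2(\mathbb{P}^{n-1})$, the powers $1,\tau,\dots,\tau^{n-1}$ restrict to a basis of the fiber cohomology, so Leray--Hirsch makes $H^*(D_V;\mathbb{R})$ a free $H^*(\Sigma;\mathbb{R})$-module on these classes. Splitting off degree $2$ and using $H^1(\mathbb{P}^{n-1})=0$ kills the mixed term $H^1(\Sigma)\otimes H^1(\mathbb{P}^{n-1})$, leaving $H^2(D_V)=\bigl(H^2(\Sigma)\otimes 1\bigr)\oplus\bigl(H^0(\Sigma)\otimes \mathbb{R}\tau\bigr)$, which is two-dimensional with basis $\{F,\tau\}$, where $F=\pi^*[\mathrm{pt}]$ is the pullback of the positive generator of $H^2(\Sigma)$.

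For the ring structure I would compute the three products directly. Since $F=\pi^*[\mathrm{pt}]$ and $[\mathrm{pt}]^2=0$ in $H^4(\Sigma)=0$, naturality of the cup product gives $F\cdot F=\pi^*([\mathrm{pt}]^2)=0$. Because $F$ is Poincar\'e dual to a fiber $[\mathbb{P}^{n-1}]$, evaluating $\tau^{n-1}\cdot F$ on $[D_V]$ reduces to $\langle (\tau|_F)^{n-1},[\mathbb{P}^{n-1}]\rangle=1$, the hyperplane class to the top power. Finally, over a surface $c_j(V)=0$ for $j\geq 2$, so \eqref{Bott-Tu} collapses to $\tau^n=-\pi^*c_1(V)\cdot\tau^{n-1}=-d\,F\cdot\tau^{n-1}=-d$, using $\pi^*c_1(V)=d\,F$ and the previous computation.

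Next I would produce $\eta$ and show $\{l,\eta\}$ is an integral basis. The key is that Leray--Hirsch holds over $\mathbb{Z}$ here, the $\tau^i$ restricting to an integral basis of $H^*(\mathbb{P}^{n-1};\mathbb{Z})$; since $H^*(\Sigma;\mathbb{Z})$ and $H^*(\mathbb{P}^{n-1};\mathbb{Z})$ are torsion-free, so is $H^*(D_V;\mathbb{Z})$, whence $H^2(D_V;\mathbb{Z})=\mathbb{Z}F\oplus\mathbb{Z}\tau$ and, by the universal coefficient theorem, $H_2(D_V;\mathbb{Z})$ is free of rank $2$. The Kronecker pairing between $H^2$ and $H_2$ is then unimodular, so $\{\tau,F\}$ admits a dual integral basis $\{l,\eta\}$: the normalizations $\langle\tau,l\rangle=1,\ \langle F,l\rangle=0$ already single out $l$, and the complementary dual basis element is by construction the unique integral class $\eta$ with $\langle\tau,\eta\rangle=0$ and $\langle F,\eta\rangle=1$. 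This simultaneously yields existence, uniqueness, and the integral-basis claim. (Alternatively, one can take the class of any section, which meets $F$ once, and subtract a multiple of $l$ to kill its $\tau$-pairing; uniqueness then follows from torsion-freeness.)

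For the last assertion, write $s:\Sigma\hookrightarrow D_V$ for the section with image $\mathbb{P}_s(L)$. By parts (1) and (2) of Lemma \ref{Xi} the tautological subbundle $\Xi$ restricts to $L$ along $\mathbb{P}_s(L)$, so $s^*\Xi^*=L^*$, and therefore $\langle\tau,s\rangle=\langle s^*c_1(\Xi^*),[\Sigma]\rangle=\langle c_1(L^*),[\Sigma]\rangle=-\deg(L)$. I expect the only delicate point to be the integrality bookkeeping in the third paragraph --- confirming torsion-freeness and unimodularity so that $\{l,\eta\}$ is a genuinely integral (not merely rational) basis; the ring computations and the section formula are routine once Lemma \ref{Xi} and \eqref{Bott-Tu} are in hand.
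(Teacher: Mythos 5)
Your proposal is correct, and for most of the lemma it follows the same skeleton as the paper's proof: Leray--Hirsch together with the relation \eqref{Bott-Tu} (using $c_j(V)=0$ for $j\ge 2$ over a surface) gives the ring structure, and the section formula $\langle \tau, s\rangle=-\deg(L)$ comes from identifying $\Xi|_s$ with $L$ via Lemma \ref{Xi}, exactly as in the paper. The one place you genuinely diverge is the construction of $\eta$. The paper argues more economically: since $\tau^{n-1}\cdot F=1$, the class $F$ is primitive, so some integral class $\eta'$ pairs to $1$ with $F$, and replacing $\eta'$ by $\eta'-\langle\tau,\eta'\rangle\, l$ (legitimate because $\langle F,l\rangle=0$ and $\langle\tau,l\rangle=1$) produces $\eta$; no integral Leray--Hirsch is invoked. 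You instead upgrade Leray--Hirsch to $\mathbb{Z}$-coefficients, deduce torsion-freeness of $H^*(D_V;\mathbb{Z})$ and hence, via universal coefficients, of $H_2(D_V;\mathbb{Z})$, and then obtain $\{l,\eta\}$ as the dual basis of $\{\tau,F\}$ under the now-unimodular Kronecker pairing. The payoff of your heavier route is that it justifies precisely the parts of the statement the paper's proof leaves implicit: uniqueness of $\eta$ and the assertion that $\{l,\eta\}$ is an \emph{integral} basis of $H_2(D_V;\mathbb{Z})$ both require knowing $H_2$ is torsion-free, which the paper never verifies. So your write-up is more complete on the integrality bookkeeping, while the paper's is leaner; your parenthetical alternative (take the class of a section and correct by a multiple of $l$) is essentially the paper's argument, with ``a section exists'' playing the role of ``$F$ is primitive.''
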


\begin{proof}
The relation  $\tau^n=-\int_{Y} c_1(V)=-d$ follows from \eqref{Bott-Tu}  and  the vanishing of $c_j(V)$ for $j>1$.

To construct the class  $\eta$ observe that  $F$ is primitive since $\tau^{n-1}\cdot F=1$.  So there exists  a class  $\eta'$ whose paring with $F$ is $1$. Since $\langle F, l \rangle =0$
and 
$\langle \tau, l\rangle =1$, we get the desired $\eta$ from adjusting $\eta'$  
 by multiple
of $l$ to achieve trivial pairing  with $\tau$. 

The pairing $\langle \tau, s\rangle=-\deg(L)$ follows from the observation that,  under the natural identification of the section $s$ with $\Sigma$ via $\pi$, $\Xi$ restricted to $s$ is identified with $L$. 
A consequence is that  $\eta$ is geometrically represented by  a nowhere zero section  of a trivial line bundle, which always exists. 
\end{proof}

\begin{lemma}\label{cubic}
For a bundle $V$ with rank $n$ and degree $d$, a class $u=x\tau+yF$ is in the forward  cone $\{q\in H^2(D_V; \mathbb R)| q^n>0, \langle q, l\rangle>0\}$ if and only if
$x>0$ and $  \frac{y}{x}>\frac{d}{n}.$ 

The ratio of $u$ is given by $\rho_{\pi}(u)=-d +n \frac{y}{x}.$
If $\langle u, \eta\rangle>0$, then $\rho_n(u)> -d$. 
\end{lemma}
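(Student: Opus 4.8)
The plan is to reduce everything to a single computation in the cohomology ring of $D_V$ recorded in Lemma~\ref{cohomology ring}. Since $H^2(D_V;\R)$ is spanned by $\{F,\tau\}$, every class is $u=x\tau+yF$, and the only relations I will need are $\tau^n=-d$, $\tau^{n-1}\cdot F=1$, and $F\cdot F=0$, together with the pairings $\langle\tau,l\rangle=1$, $\langle F,l\rangle=0$.

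First I would expand $u^n=(x\tau+yF)^n$ by the binomial theorem. Because $F\cdot F=0$, every term containing $F^2$ or a higher power of $F$ vanishes, so only the $k=0$ and $k=1$ terms survive. Using $\tau^n=-d$ and $\tau^{n-1}\cdot F=1$ this gives
$$\int_D u^n = -d\,x^n + n\,x^{n-1}y = x^{n-1}(ny-dx).$$
Next, since $\langle\tau,l\rangle=1$ and $\langle F,l\rangle=0$, I have $\langle u,l\rangle=x$. These two formulas are the whole engine of the argument.

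For the forward cone characterization I would argue in both directions. The condition $\langle u,l\rangle>0$ is exactly $x>0$. Given $x>0$ one has $x^{n-1}>0$, so $u^n>0$ is equivalent to $ny-dx>0$, i.e. $y/x>d/n$; conversely, the two inequalities $x>0$ and $y/x>d/n$ force $ny-dx>0$ and hence place $u$ in the forward cone. For the ratio I would simply divide:
$$\rho_\pi(u)=\frac{\int_D u^n}{\langle u,l\rangle^n}=\frac{x^{n-1}(ny-dx)}{x^n}=-d+n\frac{y}{x},$$
which is well-defined precisely on the forward cone, where $x>0$. Finally, from the defining properties $\langle\tau,\eta\rangle=0$ and $\langle F,\eta\rangle=1$ of the class $\eta$ in Lemma~\ref{cohomology ring} I get $\langle u,\eta\rangle=y$; thus $\langle u,\eta\rangle>0$ means $y>0$, and combined with $x>0$ this yields $y/x>0$, whence $\rho_\pi(u)=-d+n(y/x)>-d$.

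There is essentially no hard step here: the statement is a bookkeeping consequence of the ring structure already established in Lemma~\ref{cohomology ring}. The only points requiring a moment of care are observing that $x>0$ forces $x^{n-1}>0$ for every $n$ (so that the sign of $u^n$ is governed by $ny-dx$ alone), and keeping the sign convention $\tau^n=-d$ straight throughout. I would also remark that the resulting ratio formula is manifestly scale-invariant, consistent with the observation following \eqref{ratio of a class} that $\rho_\pi$ determines the ray of $u$.
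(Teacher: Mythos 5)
Your proposal is correct and follows essentially the same route as the paper's proof: expand $u^n$ via the ring relations $\tau^n=-d$, $\tau^{n-1}\cdot F=1$, $F\cdot F=0$ from Lemma~\ref{cohomology ring}, pair with $l$ to characterize the forward cone, divide to get the ratio, and use $\langle u,\eta\rangle=y$ for the final inequality. Your explicit note that $x>0$ (from membership in the forward cone) is needed to conclude $y/x>0$ is a small point the paper leaves implicit, but otherwise the arguments coincide.
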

\begin{proof}
The description of the forward cone  follows from $$\langle u, l\rangle= \langle x\tau+yF, l\rangle =x>0,$$
$$u^n=(x\tau+yF)^n=x^n\tau^n+nx^{n-1}y(\tau^{n-1}\cdot F)=x^{n-1}(-dx+ny)>0.$$ 
It follows that the ratio of $u$ is given by
$$\rho_{\pi}(u)=\frac{u^n}{\langle u, l\rangle^n}=\frac{ x^{n-1}(-dx+ny)} {x^n}=     -d +n \frac{y}{x}.$$
Note that $\langle u, \eta\rangle =y$. If $\langle u, \eta\rangle >0$, then $\rho_{\pi}(u)=-d +n y/x> -d$. 
\end{proof}

We end this subsection with a remark on smooth, linear, symplectic  and holomorphic $\mathbb P^n-$bundles. 

\begin{remark} 
Every complex bundle over a Riemann surface splits smoothly into a direct sum of complex line bundles and hence can be made holomorphic. 
Therefore any linear $\mathbb P^n-$bundle over a Riemann surface is of the form $\mathbb P_s(\E)$ of a holomorphic vector bundle $\E$.
We will study the K\"ahler geometry of such bundles in the next subsection. 

Over an algebraic variety, a  holomorphic  $\mathbb P^n-$bundle arises from a holomorphic vector bundle   (cf. Exercise  7.10  in Hartshorne's
Algebraic Geometry).  
Therefore if a smooth $\mathbb P^n$ bundle over surface can be made  a holomorphic $\mathbb P^n$ bundle, then it is linear. 

A symplectic fibration with fibers $(F, \sigma)$  is a fibration with structure group $Symp(F, \sigma)$. 
Since the fibers of $D$ are compact, a  closed form $\omega$ on $D$ which is symplectic along fibers gives rise to a symplectic fibration structure (\cite{McS}). 

When $n=2$,  if a smooth $\mathbb P^n$ bundle can be made symplectic, then  the structure group is Symp$(\mathbb P^2, \omega_{FS})$. Since Symp$(\mathbb P^2, \omega_{FS})$ is homotopic to $PU(3)$, such a bundle is linear.

 When $n=1$, the classification of symplectic ruled surface shows that all smooth $\mathbb P^1$   bundles are linear.

\end{remark}


\subsection{Holomorphic projective  bundles  $\mathbb P(\E)$} 
\subsubsection{The projective bundle $\mathbb P(\E)$ of quotient line bundles} 

We switch to the quotient bundle convention in algebraic geometry. 
Let us begin with a general setting. Let $X$ be an algebraic variety, and $\E$  a holomorphic vector bundle of rank $r$ on $X$. 
We use the Grothendick convention of projectivization and 
 denote by $\pi: \mathbb P(\E)\rightarrow X$ the projective bundle of one-dimensional quotients of $\E$. More algebraically, $\mathbb P(\E)=\hbox{Proj}_{\mathcal O_X}(\hbox{Sym}(\E))$,  where $\hbox{Sym}(\E)=\oplus_{m>0} s^m(\E)$ is the symmetric algebra of $\E$.

The projective bundle $\mathbb P(\E)$ carries the Serre line bundle $\mathcal O_{\mathbb P(\E)}(1)$, which is the tautological quotient of $\pi^*\E$: $$\pi^*\E\rightarrow \mathcal O_{\mathbb P(\E)}(1)\rightarrow 0.$$ 
When restricted to each fiber,  $\mathcal O_{\mathbb P(\E)}(1)$  is the hyperplane  bundle $\mathcal O(1)$ on $\mathbb P^{n-1}$. 
$\mathcal O_{\mathbb P(\E)}(1)$ is the universal quotient bundle in the following sense. 
Let $p: Z\rightarrow X$ be any morphism. Then,  a morphism $f: Z\rightarrow \mathbb P(\E)$ over $X$ is equivalent to  a quotient line bundle $p^*\E\rightarrow \mathcal L\rightarrow 0.$ Under this correspondence, $\mathcal L=f^*\mathcal O_{\mathbb P(\E)}(1)$.

Since $X$ is projective, $\mathcal O_{\mathbb P(\E)}(1)$ is represented by a divisor and we write $\xi$ for its first Chern class. 
As an $H^*(X;\mathbb R)-$algebra, $H^*(\mathbb P(\mathcal E);\mathbb R)$ is generated by $\xi$  subject to  the Grothendick relation 
\begin{equation}\label{Grothendick}  \sum  (-1)^{j} \,\, \pi^*c_j(\mathcal E)\, \xi^{n-j}=0.
\end{equation}
We call $\E$  {\it ample} (resp. {\it nef}) if  $\mathcal O_{\mathbb P(\E)}(1)$ is so.

We  now assume $X=\Sigma$ is a Riemann surface. 
We need the following result of Hartshorne in \cite{nagoya}  later.

\begin{theorem}\label{Hartshorne}
For a vector bundle $\E$ on a Riemann surface $\Sigma$, 
   $\mathcal O_{\mathbb P(\E)}(1)$ is nef (resp. ample) if and only if $\E$ and every quotient bundle of $\E$ has non-negative (resp. positive) degree.
Especially, suppose $\E$ is semistable, then $\mathcal O_{\mathbb P(\E)}(1)$    is nef (resp. ample) if it has non-negative (resp. positive) degree.
\end{theorem}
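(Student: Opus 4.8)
The plan is to prove the stated equivalence for both the nef and the ample cases by combining the quotient--subvariety dictionary with the Nakai--Moishezon criterion, and then to read off the semistable statement as an immediate corollary. Throughout I would use the fact that, in the Grothendieck convention, a surjection $\E \twoheadrightarrow \mathcal{Q}$ onto a rank-$s$ bundle $\mathcal{Q}$ corresponds to a closed embedding $j:\mathbb P(\mathcal{Q}) \hookrightarrow \mathbb P(\E)$ over $\Sigma$ with $j^*\mathcal O_{\mathbb P(\E)}(1)=\mathcal O_{\mathbb P(\mathcal{Q})}(1)$, and that the Grothendieck relation \eqref{Grothendick}, together with the vanishing of $c_j$ for $j\ge 2$ on a curve, yields $\int_{\mathbb P(\mathcal{Q})}\xi^s=\deg\mathcal{Q}$ (the case $\mathcal{Q}=\E$ giving $\int_{\mathbb P(\E)}\xi^r=\deg\E$).

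For the necessity direction, suppose $\mathcal O_{\mathbb P(\E)}(1)$ is nef. Since nefness is inherited by restriction to a closed subvariety, $\mathcal O_{\mathbb P(\mathcal{Q})}(1)$ is nef for every quotient $\mathcal{Q}$, and a nef class has non-negative top self-intersection, so $\deg\mathcal{Q}=\int_{\mathbb P(\mathcal{Q})}\xi^s\ge 0$. The ample case is identical, using that ampleness restricts to closed subvarieties and that an ample class has strictly positive top self-intersection by Nakai--Moishezon; taking the trivial quotient $\mathcal{Q}=\E$ already forces $\deg\E\ge 0$ (resp. $>0$).

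The crux is the sufficiency direction, and here I would run the Nakai--Moishezon criterion on $\mathbb P(\E)$: $\mathcal O_{\mathbb P(\E)}(1)$ is ample iff $\int_W \xi^{\dim W}>0$ for every irreducible closed $W$. Subvarieties contained in a fiber are automatic, since $\xi$ restricts to the ample $\mathcal O(1)$ on $\mathbb P^{r-1}$, so all the content lies in the $W$ dominating $\Sigma$. For these I would induct on $\mathrm{rk}\,\E$, using the Harder--Narasimhan filtration (or a minimal-degree quotient line bundle) to pass to a quotient of smaller rank all of whose own quotients still have positive degree, and relating $\int_W\xi^{\dim W}$ to such degrees via the projection formula and the Euler sequence $0\to\mathcal K\to\pi^*\E\to\mathcal O(1)\to 0$. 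The main obstacle is precisely this step: a sub- or quotient bundle of a quotient of $\E$ need not itself be a quotient of $\E$, so the induction cannot be run naively on the base and must be organized on $\mathbb P(\E)$ via flag bundles or the universal quotient, and it is exactly in controlling $\int_W\xi^{\dim W}$ for all dominating $W$ that the hypothesis on \emph{every} quotient degree gets genuinely used. The nef statement then follows by the standard limiting argument: twisting $\E$ by a line bundle of positive degree $t$ replaces $\xi$ by $\xi+t\,F$ and makes all quotient degrees strictly positive, so the ample case applies for every $t>0$, and letting $t\to 0^+$ shows $\xi$ is nef since the nef cone is the closure of the ample cone.

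Finally, the semistable case is a one-line corollary of the main equivalence. If $\E$ is semistable of degree $\ge 0$ (resp. $>0$), then every quotient $\mathcal{Q}$ satisfies $\mu(\mathcal{Q})\ge\mu(\E)\ge 0$ (resp. $>0$), whence $\deg\mathcal{Q}=\mu(\mathcal{Q})\,\mathrm{rk}\,\mathcal{Q}\ge 0$ (resp. $>0$), and the equivalence just proved gives that $\mathcal O_{\mathbb P(\E)}(1)$ is nef (resp. ample).
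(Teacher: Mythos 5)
The first thing to note is that the paper does not prove this statement at all: Theorem \ref{Hartshorne} is quoted verbatim from Hartshorne's paper \cite{nagoya} (``We need the following result of Hartshorne\dots''), so your proposal has to stand entirely on its own. Its easy parts do stand: the necessity direction (restrict $\mathcal O_{\mathbb P(\mathcal E)}(1)$ to $\mathbb P(\mathcal Q)$ for each quotient $\mathcal E \twoheadrightarrow \mathcal Q$ and compute $\int_{\mathbb P(\mathcal Q)}\xi^{s}=\deg\mathcal Q$) and the semistable corollary are correct. But the sufficiency direction, which is the entire substance of the theorem, is never proved: you describe an induction on rank via Harder--Narasimhan filtrations, the Euler sequence and flag bundles, and then concede that ``the main obstacle is precisely this step.'' Concretely, what is missing is the following. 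By Kleiman's criterion (which applies here and which the paper itself invokes, so subvarieties of dimension $\ge 2$ are a red herring), everything reduces to irreducible curves $Z$ dominating $\Sigma$; if $Z$ has relative degree $m$, then $\xi\cdot Z$ is the degree of a quotient line bundle of $p^*\mathcal E$ on a degree-$m$ covering $p\colon \widetilde Z\to\Sigma$, equivalently (by the paper's Lemma \ref{multisection}) of a quotient line bundle of $s^m\mathcal E$ on $\Sigma$. Bounding this below requires that the minimal slope scales under finite pullback, i.e.\ that semistability is preserved by pullback along finite maps, equivalently by symmetric powers --- a genuinely nontrivial theorem in characteristic zero (it is exactly the other Hartshorne input the paper quotes in the proof of Lemma \ref{degree}, and it fails in positive characteristic, which is why this circle of results is delicate). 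Projection formula plus Euler sequence plus a naive induction will not manufacture this fact, and your sketch never confronts it.

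There is a second, independent error: the limiting argument deducing the nef case from the ample case. On the fixed curve $\Sigma$ line bundles have integer degree, so the only twists you can realize are $\xi+tF$ with $t\in\mathbb Z_{\ge 1}$; there is no ``letting $t\to 0^+$.'' What your argument actually yields is that $\xi+F$ is ample, and for an $m$-section $Z$ this only gives $\xi\cdot Z>-F\cdot Z=-m$, which is arbitrarily far from the nefness inequality $\xi\cdot Z\ge 0$. The standard repairs are either to verify nefness directly on curves via the multisection correspondence above (again needing the semistability-under-pullback input), or to pass to finite covers $\Sigma'\to\Sigma$ of large degree $d$, where twisting by a degree-one line bundle on $\Sigma'$ realizes a ``fractional'' twist of size $1/d$ of the pulled-back bundle; either way the argument must leave the fixed base curve, which your proposal does not do.
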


\subsubsection{Cohomology and homology of $\mathbb P(\E)$}
We still use $F, l, \eta$ for $\mathbb P(\E)$ and 
summarize the results on the real cohomology and integral homology of $\mathbb P(\E)$, $H^*(\mathbb P(\E))$ and $H_2(\mathbb P(\E);\mathbb Z)$.

\begin{lemma}\label{cubic-kahler}
Suppose $\E$ is  a holomorphic bundle over a Riemann surface $X$ with rank $n$ and degree $d=\deg(\E)$. 
Then 
\begin{enumerate}
\item 
$\mathbb P(\E)=\mathbb P_s(\E^*) \quad \hbox{and} \quad \deg(\E)=-\deg(\E^*).$
Under the identification  $\mathbb P(\E)=\mathbb P_s(\E^*)$, the $\xi$ class corresponds to the $ \tau$ class of $\mathbb P_s(\E^*)$.

\item $H^2(\mathbb P(\E);\mathbb Z)$ is generated by $\xi$ and the (Poincar\'e dual of) the fiber class $F$.

\item The  even cohomology ring structure of $\mathbb P(\E)$  is described by 
$$\xi^{n-1}\cdot F=(\xi_F)^{n-1}=1, \quad F\cdot F=0, \quad  \xi^n=d.$$

\item  $H_2(\mathbb P(\E);\mathbb Z)$ is generated by $l$ and $\eta$, with the pairing with $H^2(\mathbb P(\E);\mathbb Z)$ given by 
$$\langle  F, l \rangle =0,  \quad 
\langle \xi, l\rangle =1,\quad   \langle  F, \eta \rangle =1,  \quad 
\langle \xi, \eta\rangle =0       .$$

\item  For a class $u=x\xi+yF\in H^2(\mathbb P(\E); \mathbb R)$,
$$u^n=x^{n-1}(dx+ny) \quad  \hbox{and} \quad   \langle u, l\rangle=\langle x\xi+yF, l\rangle=x.$$
Hence $u$  is in the forward  cone $\{q\in H^2(\mathbb P(\E); \mathbb R)| q^n>0, \langle q, l\rangle>0\}$ if and only if
$$x>0\quad \hbox{and} \quad  \frac{y}{x}>-\frac{d}{n}.$$

\item   The ratio of $u=x\xi+yF$ in the forward cone is given by $$\rho_{\pi}(u)=d +n \frac{y}{x}.$$

\item  If $u$ is in the forward cone and $\langle u, \eta\rangle>0$, then $\rho_n(u)> d$. 
\end{enumerate}
\end{lemma}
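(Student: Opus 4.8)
The plan is to reduce everything to the subbundle picture already analyzed in Lemma \ref{cohomology ring} and Lemma \ref{cubic}, via the standard dictionary between the Grothendieck (quotient) convention and the subbundle convention; once part (1) is in place, the remaining items are either immediate transports of those lemmas or short direct computations.

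First I would prove part (1), which carries all the conceptual content. Dualizing the universal quotient sequence $\pi^*\E \to \mathcal{O}_{\mathbb{P}(\E)}(1) \to 0$ produces an inclusion $0 \to \mathcal{O}_{\mathbb{P}(\E)}(-1) \to \pi^*\E^*$; fiberwise this exhibits $\mathcal{O}(-1)$ as the tautological line subbundle of $\pi^*\E^*$, which is exactly the defining datum of $\mathbb{P}_s(\E^*)$ together with its tautological subbundle $\Xi$ from Lemma \ref{Xi}. Hence $\mathbb{P}(\E) = \mathbb{P}_s(\E^*)$ and $\Xi = \mathcal{O}_{\mathbb{P}(\E)}(-1)$, so that $\tau = c_1(\Xi^*) = c_1(\mathcal{O}_{\mathbb{P}(\E)}(1)) = \xi$. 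The degree identity $\deg(\E) = -\deg(\E^*)$ is just $c_1(\E^*) = -c_1(\E)$.

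With this dictionary, parts (2)--(4) follow by applying Lemma \ref{cohomology ring} to $V = \E^*$, whose degree is $-d$. The only point requiring care is the sign: Lemma \ref{cohomology ring} gives $\tau^n = -\deg(V) = -(-d) = d$, which is precisely $\xi^n = d$; the relations $\xi^{n-1}\cdot F = 1$ and $F\cdot F = 0$, the generation of $H^2$ by $\{\xi, F\}$, the integral basis $\{l, \eta\}$ of $H_2$, and the pairings $\langle F, l\rangle = 0$, $\langle \xi, l\rangle = 1$, $\langle F, \eta\rangle = 1$, $\langle \xi, \eta\rangle = 0$ all transport verbatim (the last because $\eta$ was defined by $\langle \tau, \eta\rangle = 0$). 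Alternatively one reads $\xi^n = d$ directly off the Grothendieck relation \eqref{Grothendick} using $c_j(\E) = 0$ for $j > 1$, which gives an independent check of the sign. Parts (5)--(7) are then formal: expanding $u = x\xi + yF$ and using $F^2 = 0$, $\xi^{n-1}F = 1$, $\xi^n = d$ yields $u^n = x^n\xi^n + nx^{n-1}y\,\xi^{n-1}F = x^{n-1}(dx+ny)$, while $\langle u, l\rangle = x$; positivity of both (with $x > 0 \Rightarrow x^{n-1} > 0$) gives the forward-cone description $x > 0$, $y/x > -d/n$. The ratio is $\rho_{\pi}(u) = u^n/\langle u, l\rangle^n = (dx+ny)/x = d + n\,y/x$, and since $\langle u, \eta\rangle = y$, the condition $\langle u, \eta\rangle > 0$ forces $y > 0$, hence $\rho_{\pi}(u) > d$. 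This is the exact mirror of Lemma \ref{cubic}, with $d$ replacing $-d$ because of the duality in part (1).

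I expect the only genuine obstacle to be the bookkeeping in part (1): tracking the quotient-versus-subbundle duality consistently so that $\xi$ matches $\tau$ (rather than $-\tau$) and the degree sign comes out as $\xi^n = +d$ (rather than $-d$). Everything downstream is mechanical, so getting this single convention right is what the whole lemma hinges on.
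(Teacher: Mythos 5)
Your proof is correct and takes essentially the same route as the paper: the paper also hinges everything on the duality in part (1) --- phrased there as ``a line subbundle $\mathcal L$ of $\E^*$ determines a hyperplane of $\E$, hence a quotient line bundle $\mathcal L^*$,'' which is exactly your dualized universal sequence --- and then deduces (2)--(4) from Lemma \ref{cohomology ring} and (5)--(7) from Lemma \ref{cubic}. Your sign bookkeeping ($\tau^n=-\deg(\E^*)=d$, so $\xi^n=d$) and the Grothendieck-relation cross-check are consistent with the paper's conventions, so nothing further is needed.
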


\begin{proof} For (1) just notice that 
a line subbundle $\mathcal L$ of $\E^*$ determines a hyperplane $\hat {\mathcal L}$ of $\E$, which in turn gives rise to a quotient line bundle $\mathcal L^*$ of $\E$. 

 (2), (3) and  (4) follow from (1) and Lemma \ref{cohomology ring}.

(5), (6) and (7) follow from (1) and Lemma \ref{cubic}. 
\end{proof}
\subsubsection{The curve cone, multi-sections and symmetric powers}

The curve cone $NE(\mathbb P(\E))\subset H_2(\mathbb P(\E))$  is spanned by the classes of all effective $1-$cycles. An irreducible curve is either contained in a fiber,  or it intersects the fiber divisor at zero-cycles. For the first case, the classes are generated by $l$ which is the generator of the second homology of a fiber.  

For the latter case, it is a multi-section $Z$. Suppose $Z$ is an $m-$section. Then $\langle F, [Z]\rangle =m$, and the restriction of the projection $\pi|_Z:Z\to X$ is a degree $m$ ramified covering  of $X$. Let $\iota: Z\rightarrow \mathbb P(\E)$ denote the inclusion.
By the universal property of  $\mathcal O_{\mathbb P(\E)}(1)$,     we have  the  quotient line bundle over $Z$, 
$$\pi|_Z^*\E\rightarrow \iota^*\mathcal O_{\mathbb P(\E)}(1)\rightarrow 0.$$

\begin{lemma} \label{multisection}
Let $p: Z\rightarrow X$ be a ramified covering of algebraic curves of degree $m$, and $\E$ a vector bundle over $X$. Then each quotient line bundle $p^*\E \rightarrow \mathcal L \rightarrow 0$ over $Z$ gives rise to a quotient line bundle $\mathcal M_{\mathcal L}$ of $s^m\E$ over $X$ with  degree $\deg \mathcal L$. 
\end{lemma}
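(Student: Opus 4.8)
The plan is to reinterpret the quotient line bundle on $Z$ as a multisection of $\mathbb{P}(\E)$, push it forward along $p$ to an $m$-valued section over $X$, and then \emph{multiply} the resulting $m$ quotient lines in each fibre to land in $\mathbb{P}(s^m\E)$.

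First I would use the universal property of $\mathcal{O}_{\mathbb{P}(\E)}(1)$ recalled above: the surjection $p^*\E\to\mathcal{L}$ is the same datum as a morphism $f\colon Z\to\mathbb{P}(\E)$ over $X$ with $f^*\mathcal{O}_{\mathbb{P}(\E)}(1)=\mathcal{L}$. Since $p$ is finite and flat of degree $m$, the scheme-theoretic fibre $p^{-1}(x)$ has length $m$ for every $x\in X$, so $x\mapsto f_*[p^{-1}(x)]=\sum_{z\mapsto x}\mathrm{mult}_z\,[f(z)]$ defines a flat family of effective $0$-cycles of degree $m$ in the fibres of $\mathbb{P}(\E)\to X$, i.e.\ a genuine morphism $\sigma\colon X\to\mathrm{Sym}^m_X\mathbb{P}(\E)$.

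Next I would use the classical multiplication morphism $\mu\colon\mathrm{Sym}^m_X\mathbb{P}(\E)\to\mathbb{P}(s^m\E)$, which on a fibre sends an unordered tuple of quotients $\E_x\to L_1,\dots,\E_x\to L_m$ to the product quotient $s^m\E_x\to L_1\otimes\cdots\otimes L_m$. The composite $s=\mu\circ\sigma\colon X\to\mathbb{P}(s^m\E)$ is a section, which by the universal property again is exactly a surjection $s^m\E\to\mathcal{M}_\mathcal{L}$ onto the line bundle $\mathcal{M}_\mathcal{L}:=s^*\mathcal{O}_{\mathbb{P}(s^m\E)}(1)$; phrasing the construction through a morphism to $\mathbb{P}(s^m\E)$ is what makes $\mathcal{M}_\mathcal{L}$ automatically a \emph{quotient} (surjectivity is built in), rather than merely the image of a sheaf map of possibly smaller degree. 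Unwinding the definition of $\mu$ along $\sigma$ identifies $\mathcal{M}_\mathcal{L}$, over the locus where $p$ is \'etale, with $\bigotimes_{z\mapsto x}\mathcal{L}_z$, i.e.\ with the norm $\mathrm{Nm}_{Z/X}(\mathcal{L})$; the degree statement then follows from the standard fact that the norm corresponds to the proper pushforward of divisors, so $\deg\mathcal{M}_\mathcal{L}=\deg\mathrm{Nm}_{Z/X}(\mathcal{L})=\deg_Z\mathcal{L}$.

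The main obstacle is the behaviour at the branch points of $p$, where the points $f(z)$ collide and the naive ``multiply the quotient maps'' recipe is not obviously well behaved. The projective formulation above is designed to absorb this: flatness of $p$ guarantees that $\sigma$ extends across the ramification locus as a morphism (the limiting $0$-cycle is just the degenerate length-$m$ configuration), and $\mu$ is a morphism defined on all of $\mathrm{Sym}^m_X\mathbb{P}(\E)$, so no indeterminacy arises. The one point that still requires care is checking that $s^*\mathcal{O}_{\mathbb{P}(s^m\E)}(1)$ equals $\mathrm{Nm}_{Z/X}(\mathcal{L})$ at the ramified fibres as well; I would verify this either by a local computation in a uniformizer, or simply by observing that both are line bundles on the smooth curve $X$ which agree on the dense \'etale locus of $p$ and hence everywhere.
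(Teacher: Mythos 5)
Your construction is, at its core, the same as the paper's: both produce $\mathcal M_{\mathcal L}$ by multiplying, over each $x\in X$, the $m$ quotient lines $\mathcal L|_{y_i}$, $y_i\in p^{-1}(x)$ (counted with multiplicity at branch points), to obtain a quotient line of $s^m\E|_x$. The difference is packaging. The paper does it by hand: holomorphy is checked by noting that the transition functions of the resulting bundle are polynomials in those of $\mathcal L$, and the degree is obtained by observing that a section $s$ of $\mathcal L$ induces a section $s'$ of $\mathcal M_{\mathcal L}$ whose zero divisor is exactly the pushforward of $\mathrm{div}(s)$, so the degree is preserved. You globalize instead: $f\colon Z\to\mathbb P(\E)$ from the universal property, $\sigma\colon X\to \mathrm{Sym}^m_X\mathbb P(\E)$ from flatness of $p$, the multiplication morphism $\mu$ into $\mathbb P(s^m\E)$, and the norm $\mathrm{Nm}_{Z/X}$ for the degree. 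What your route buys is that surjectivity and good behavior at ramification points are automatic, absorbed into the statement that $\sigma$ and $\mu$ are everywhere-defined morphisms; what it costs is reliance on heavier inputs (existence of the relative symmetric product and of $\mu$, and the norm--pushforward compatibility), whereas the paper's argument is elementary and self-contained --- indeed the paper's section argument \emph{is} the standard proof that $\deg\mathrm{Nm}_{Z/X}(\mathcal L)=\deg\mathcal L$.

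One caution: of your two suggested ways to identify $\mathcal M_{\mathcal L}$ with $\mathrm{Nm}_{Z/X}(\mathcal L)$ across the branch points, only the local computation is valid. Two line bundles on a smooth curve that are isomorphic over a dense open set need not be isomorphic (twist one by a divisor supported on the complement); what is true is that the isomorphism defined on the \'etale locus extends as a meromorphic map, and you must check that it has neither zero nor pole at the ramification points --- which is precisely the local computation in a uniformizer. Alternatively, drop the norm entirely and compute $\deg\mathcal M_{\mathcal L}$ as the paper does, by pushing forward the divisor of a meromorphic section of $\mathcal L$, which handles ramification points in the same stroke.
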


\begin{proof} First notice that any fiber $\mathcal L|_{y}$ could be viewed as a quotient subspace of $ \E|_{p(y)}$. For any $x\in X$, we could choose $m$ $1$-dimensional vector quotient spaces of $\E_x$ by taking  $\mathcal L|_{y_i}$  where $y_i\in p^{-1}(x)$. This may count with multiplicities if it is a ramified point.  Then the line Sym$(\mathcal L_{y_1}\otimes ...\otimes \mathcal L_{y_m})$ is
a $1$-dimensional quotient space in the symmetric power $s^m({\E})|_x$. Hence we obtain  a quotient line bundle of $s^m\E$, which is holomorphic since the transition functions are polynomials of that of $\mathcal L$. 

It is clear from the construction that a section $s$ of $\mathcal L$ would give rise to a section $s'$ of  $\mathcal M_{\mathcal L}$. Moreover,  the zero locus
of $s$  counted with multiplicity  would exactly  correspond to that of $s'$. Hence the degree of the quotient bundle is preserved.
\end{proof}

\begin{remark} Note that  a quotient line bundle  of $s^m({\mathcal  E})$ may not correspond to a $m-$section of ${\mathbb P(\E)}$. This is true if the rank of ${ \E}$ is 2. The simple algebraic fact is a symmetric tensor is not always decomposable, i.e. of the form sym$(v_1\otimes ...\otimes v_m)$. \end{remark}

By Lemma \ref{multisection} an $m-$section $Z$  induces a quotient line bundle  of $s^m\E$, which we denote by $\mathcal M_Z$.
The homology class  of $Z$ is
determined by  $\deg \mathcal M_Z$ as follows.

\begin{lemma} \label{m-section pairing}
Suppose $\iota:Z\to \mathbb P(\E)$ is an $m-$section with the associated quotient line bundle $\mathcal M_Z$ of $s^m\E$.  
Then  $\langle \xi, [Z]\rangle =\deg \mathcal M_Z$ and 
 hence 
 \begin{equation} 
 [Z]=   \deg(\mathcal M_Z) l +  m\eta.
 \end{equation}
 \end{lemma}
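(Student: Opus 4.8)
The plan is to reduce the whole statement to the single degree identity $\langle \xi, [Z]\rangle = \deg \mathcal M_Z$ and then read off the homology formula mechanically from the intersection data already recorded in Lemma \ref{cubic-kahler}(4). First I would write $[Z] = a\,l + b\,\eta$ with $a,b\in\mathbb Z$, which is legitimate because $\{l,\eta\}$ is an integral basis of $H_2(\mathbb P(\E);\mathbb Z)$. Pairing with $F$ and with $\xi$ and using $\langle F,l\rangle=0$, $\langle F,\eta\rangle=1$, $\langle \xi,l\rangle=1$, $\langle \xi,\eta\rangle=0$ gives $b=\langle F,[Z]\rangle$ and $a=\langle \xi,[Z]\rangle$. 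Since $Z$ is an $m$-section, the projection $\pi|_Z:Z\to X$ has degree $m$, so a generic fiber meets $Z$ in $m$ points and $\langle F,[Z]\rangle=m$; hence $b=m$. It then remains only to identify $a=\langle \xi,[Z]\rangle$ with $\deg\mathcal M_Z$, after which $[Z]=\deg(\mathcal M_Z)\,l+m\eta$ follows at once.

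For the degree identity I would argue as follows. Since $\xi=c_1(\mathcal O_{\mathbb P(\E)}(1))$, we have $\langle \xi,[Z]\rangle=\int_Z \iota^*c_1(\mathcal O_{\mathbb P(\E)}(1))=\deg\big(\iota^*\mathcal O_{\mathbb P(\E)}(1)\big)$. By the universal property of $\mathcal O_{\mathbb P(\E)}(1)$ recalled just before the lemma, the inclusion $\iota$ of the $m$-section corresponds exactly to the quotient line bundle $\pi|_Z^*\E\to \iota^*\mathcal O_{\mathbb P(\E)}(1)\to 0$ over $Z$; that is, the bundle $\mathcal L:=\iota^*\mathcal O_{\mathbb P(\E)}(1)$ is precisely the quotient line bundle out of which $\mathcal M_Z$ is built in Lemma \ref{multisection}. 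That lemma asserts $\deg\mathcal M_Z=\deg\mathcal L$, so $\langle \xi,[Z]\rangle=\deg\mathcal L=\deg\mathcal M_Z$, which is the required equality.

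The only point that genuinely needs care is the degree comparison $\deg\mathcal L=\deg\mathcal M_Z$ when $Z$ is singular or the covering $\pi|_Z$ is ramified, since then the correspondence between quotient fibers and the symmetric power must be taken with multiplicities. This is, however, exactly the content of Lemma \ref{multisection}, whose proof already matches the zero loci of sections with multiplicity so as to preserve degree; passing to the normalization if necessary does not change $\deg\mathcal L$. Everything else is a formal consequence of the universal property and the pairing table, so no further computation is needed.
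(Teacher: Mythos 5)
Your proposal is correct and follows essentially the same route as the paper: both reduce the lemma to the identity $\langle \xi, [Z]\rangle = \deg \iota^*\mathcal{O}_{\mathbb P(\E)}(1)$ computed via $c_1$, invoke Lemma \ref{multisection} for the degree comparison $\deg \mathcal M_Z = \deg \iota^*\mathcal{O}_{\mathbb P(\E)}(1)$, and then read off $[Z]=\deg(\mathcal M_Z)\,l+m\eta$ from the pairing table with $\langle F,[Z]\rangle=m$. Your write-up merely makes explicit the expansion $[Z]=a\,l+b\,\eta$ and the remark about ramification/multiplicity, both of which the paper leaves implicit.
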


 \begin{proof}
 By Lemma \ref{multisection},
 $$\deg \mathcal M_Z=\deg \iota^*\mathcal O_{\mathbb P(\E)}(1)=
 \int_Z c_1(\iota^*\mathcal O_{\mathbb P(\E)}(1))=\int_Z \iota^*\xi=\langle \xi, \iota_* [Z]\rangle.$$
  The last statement follows from the first and 
 $\langle F, [Z]\rangle =m$. 
\end{proof}

\subsubsection{Semi-stable bundles and direct sums}

Recall that the slope of a vector bundle $\E$  is the ratio $\mu(\E)=\deg(\E)/ \hbox{rank} (\E)$. 
$\E$ is called semistable if every subbundle $\mathcal F\subset \E$ satisfies $\mu(\mathcal F)\leq \mu(\E)$. 
Equivalently, $\E$ is  semistable if every quotient bundle $\mathcal G= \E/\mathcal F$ satisfies $\mu(\mathcal G)\geq \mu(\E)$.

\begin{lemma} \label{degree}
If $\V$ is a rank $r$ semi-stable bundle, then $s^m\V$ is also semi-stable with 
$\mu(s^m\V)=m \,\,\mu (\V).$
\end{lemma}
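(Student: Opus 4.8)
The plan is to prove the two assertions separately: first the slope identity $\mu(s^m\V)=m\,\mu(\V)$ by a Chern-class computation, and then the semistability of $s^m\V$ by realizing it as a direct summand of the $m$-fold tensor power $\V^{\otimes m}$ and invoking the fact that tensor products of semistable bundles on a curve are semistable.

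First I would compute the degree of $s^m\V$ via the splitting principle. Writing $a_1,\dots,a_r$ for the Chern roots of $\V$, the Chern roots of $s^m\V$ are the sums $k_1a_1+\cdots+k_ra_r$ indexed by the $N=\binom{m+r-1}{r-1}$ multi-indices $\mathbf k=(k_1,\dots,k_r)$ with $k_i\ge 0$ and $|\mathbf k|=m$, where $N=\mathrm{rank}(s^m\V)$. Summing these roots gives $c_1(s^m\V)=S\,c_1(\V)$, where $S=\sum_{|\mathbf k|=m}k_i$ is independent of $i$ by the symmetry of the index set under permuting coordinates. Summing the identity $\sum_i k_i=m$ over all $\mathbf k$ yields $rS=mN$, hence $S=mN/r$ and $\deg(s^m\V)=\frac{mN}{r}\deg(\V)$. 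Dividing by the rank $N$ gives $\mu(s^m\V)=\frac{m}{r}\deg(\V)=m\,\mu(\V)$. Since we are on a curve, only $c_1$ matters, so this computation is complete.

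For semistability I would use that we work over $\mathbb C$, so $\mathrm{char}=0$ and the symmetrization idempotent $\frac{1}{m!}\sum_{\sigma\in S_m}\sigma$ on $\V^{\otimes m}$ exhibits $s^m\V$ as a direct summand $\V^{\otimes m}=s^m\V\oplus R$. The key input is that $\V^{\otimes m}$ is semistable: since $\V$ is semistable on a compact Riemann surface, the tensor product of semistable bundles over a smooth projective curve in characteristic zero is again semistable, so $\V^{\otimes m}$ is semistable of slope $m\,\mu(\V)$. In the genus-zero case this is immediate, since a semistable bundle on $\mathbb P^1$ is a balanced sum $\mathcal O(a)^{\oplus r}$ and then $\V^{\otimes m}$ and $s^m\V$ are themselves balanced sums of a single line bundle; for $g\ge 1$ it follows from the Narasimhan--Seshadri correspondence, as after normalizing the slope semistable bundles arise from unitary representations of (a central extension of) $\pi_1(\Sigma)$, and tensor products of unitary representations remain unitary, hence semistable.

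Finally I would invoke the direct-summand principle: a direct summand $A$ of a semistable bundle $W$ with $\mu(A)=\mu(W)$ is itself semistable, since any subsheaf $A'\subset A$ is also a subsheaf of $W$, whence $\mu(A')\le\mu(W)=\mu(A)$. Applying this with $W=\V^{\otimes m}$ and $A=s^m\V$, and using $\mu(s^m\V)=m\,\mu(\V)=\mu(\V^{\otimes m})$ from the slope computation, we conclude that $s^m\V$ is semistable. The \emph{main obstacle} is the tensor-product theorem for semistable bundles; its purely algebraic proofs are delicate, and the cleanest route in this complex-analytic setting is the unitary-representation argument via Narasimhan--Seshadri, with the arbitrary-slope case handled either by the general form of that theorem or by twisting and pulling back along a finite cover to reduce to degree zero.
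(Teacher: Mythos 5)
Your proposal is correct. The slope identity is proved essentially as in the paper: the paper also uses the splitting principle, recording ${\rm rank}\, s^m\V={m+r-1 \choose m}$ and $c_1(s^m\V)={m+r-1 \choose m-1}c_1(\V)$, whose ratio is $m/r$; your bookkeeping with the symmetrized sum $S$ of multi-index entries produces the same multiplier $S=mN/r$, so the two computations coincide. Where you genuinely diverge is the semistability assertion. The paper disposes of it in one line by quoting Hartshorne's theorem from \cite{nagoya} (the same source as Theorem \ref{Hartshorne}): symmetric powers of a semistable bundle over a smooth curve are semistable. You instead realize $s^m\V$ as a direct summand of $\V^{\otimes m}$ via the symmetrizing idempotent (valid in characteristic zero), invoke the tensor-product theorem for semistable bundles on a curve, and descend semistability to the summand by the correct observation that a subsheaf of a summand is a subsheaf of the whole, combined with the slope equality $\mu(s^m\V)=m\,\mu(\V)=\mu(\V^{\otimes m})$ you already established. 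Both routes rest on a deep citable input of comparable weight: the paper's citation is tailored exactly to the statement needed (and is reused elsewhere, e.g.\ in Lemma \ref{sum}), while your reduction is more flexible, since it yields semistability of any Schur functor of $\V$, not just the symmetric powers, at the cost of a heavier black box. One imprecision in your sketch of that black box: Narasimhan--Seshadri attaches unitary representations to \emph{polystable} bundles, not to arbitrary semistable ones, so the $g\ge 1$ case needs an additional Jordan--H\"older/S-equivalence step (or simply a citation for the tensor-product theorem, e.g.\ to Narasimhan--Seshadri or Ramanan--Ramanathan). Since you explicitly flag the tensor-product theorem as the external input rather than claiming a complete proof of it, this does not affect the correctness of your reduction.
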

\begin{proof} 
A basic fact about semi-stable bundles over curves  is due to Hartshorne \cite{nagoya}: 
The symmetric powers of a semi-stable bundle over a smooth curve are semi-stable.

Since both bundles are semi-stable, the slope is determined by the rank and the degree of the
bundles themselves. 
For $m\geq 0$ one has that  $\pi_*\mathcal O_{\mathbb P(\V)}(m)=s^m\V.$
We note that   $${\rm rank} \,\,s^m \V= {m+r-1 \choose m}, $$
which is the number of $r-$variable monomials of degree $m$, and  $$c_1(  s^m\V) ={m+r-1 \choose m-1}c_1  (\V),$$
which can be verified by the splitting principle. 
By the formulae above, $$\mu(s^m\V)=\frac  { \deg (s^m\V)}   {  {\rm rank} \,\,s^m({\V})} = m \frac{\deg (\V)}{{\rm rank} \V}=      m \,\,\mu (\V).$$
\end{proof}

\begin{lemma}\label{slope estimate}
Let $\E=\V_1\oplus \cdots \oplus \V_k$ with 
$\V_1, \cdots, \V_k$  semi-stable bundles and  $\mu(\V_1)\ge \cdots \ge \mu(\V_k)$. Then 
\begin{enumerate}
\item If $\mathcal L$ is a line subbundle of $\E$, then $\deg \mathcal L\le \mu(\V_1)$. 
\item If $\mathcal L$ is a quotient line bundle, then $\deg \mathcal L\ge \mu(\V_k)$. 
\end{enumerate}
\end{lemma}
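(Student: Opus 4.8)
The plan is to prove (1) directly, by projecting the line subbundle onto the semistable summands, and then to deduce (2) from (1) by dualizing. The only elementary input I will use, besides the semistability hypothesis, is the following fact on a smooth curve: a nonzero sheaf morphism out of a line bundle into a torsion-free sheaf is injective, since a nonzero kernel would force the image to be a torsion subsheaf of a torsion-free sheaf. Consequently the image of such a morphism is a rank-one subsheaf isomorphic to the source, in particular of the same degree.

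For (1), let $\mathcal L \hookrightarrow \E = \V_1 \oplus \cdots \oplus \V_k$ be a line subbundle and let $p_i : \E \to \V_i$ be the projections. Since the inclusion is nonzero, at least one composite $f_i = p_i \circ (\mathcal L \hookrightarrow \E) : \mathcal L \to \V_i$ is nonzero; fix such an $i$. By the remark above $f_i$ is injective, so its image $\mathcal I \subset \V_i$ is a rank-one subsheaf with $\deg \mathcal I = \deg \mathcal L$. Now I invoke semistability of $\V_i$: every rank-one subsheaf of $\V_i$ has degree at most $\mu(\V_i)$, which follows from the subbundle form of the definition by passing to the saturation (a line subbundle of degree no smaller than $\mathcal I$). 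Hence $\deg \mathcal L = \deg \mathcal I \le \mu(\V_i) \le \mu(\V_1)$, using that $\V_1$ has the largest slope.

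For (2), I would dualize. A quotient line bundle $\E \twoheadrightarrow \mathcal L$ has a subbundle kernel $\mathcal K$ (its quotient $\mathcal L$ is locally free), so dualizing the short exact sequence $0 \to \mathcal K \to \E \to \mathcal L \to 0$ yields $0 \to \mathcal L^* \to \E^* \to \mathcal K^* \to 0$, exhibiting $\mathcal L^*$ as a line subbundle of $\E^* = \V_1^* \oplus \cdots \oplus \V_k^*$. Each $\V_i^*$ is again semistable with $\mu(\V_i^*) = -\mu(\V_i)$, and since the original slopes are decreasing, the largest slope among the duals is $\mu(\V_k^*) = -\mu(\V_k)$. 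Applying part (1) to $\mathcal L^* \hookrightarrow \E^*$ gives $\deg \mathcal L^* \le -\mu(\V_k)$, and since $\deg \mathcal L^* = -\deg \mathcal L$ this is exactly $\deg \mathcal L \ge \mu(\V_k)$.

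The argument is essentially routine; the only point requiring care is the bookkeeping between subsheaves and subbundles when invoking semistability, namely that the subsheaf form of the slope bound used in (1) reduces to the subbundle form of the definition by saturating. A reader preferring to avoid duality for (2) can argue symmetrically, composing the inclusions $\V_i \hookrightarrow \E$ with the quotient map $\E \twoheadrightarrow \mathcal L$ and using the quotient form of semistability (every quotient sheaf of $\V_i$ has slope at least $\mu(\V_i)$); the main obstacle there is again ensuring the relevant composite is nonzero and applying the slope bound to a possibly non-locally-free quotient, which is handled by the same saturation bookkeeping.
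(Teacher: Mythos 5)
Your proof is correct and takes essentially the same route as the paper: part (2) is proved by the identical dualization argument, and your part (1) --- projecting $\mathcal L$ onto the summands and obtaining a nonzero map $\mathcal L \to \V_i$ --- is the same argument as the paper's twist by $\mathcal L^{-1}$ followed by decomposing global sections, since $\mathrm{Hom}(\mathcal L, \V_i) \cong \Gamma(\V_i \otimes \mathcal L^{-1})$. Your explicit treatment of the subsheaf-versus-subbundle (saturation) point is a detail the paper leaves implicit, but it does not change the substance of the argument.
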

\begin{proof}
If $\mathcal L$ is a line subbundle, then  $\mathcal O$ is a line subbundle of   $$\E\otimes \mathcal L^{-1}=(\V_1\otimes \mathcal L^{-1})\oplus \cdots \oplus (\V_k\otimes \mathcal L^{-1}).$$ As $$\Gamma((\V_1\otimes \mathcal L^{-1})\oplus \cdots \oplus (\V_k\otimes \mathcal L^{-1}))=\Gamma(\V_1\otimes \mathcal L^{-1})\oplus \cdots \oplus \Gamma(\V_k\otimes \mathcal L^{-1}),$$ we know $\mathcal O$ is a subbundle of at least one of $\V_i\otimes \mathcal L^{-1}$. As each $\V_i\otimes \mathcal L^{-1}$ is semi-stable, we have $\deg (\V_i\otimes \mathcal L^{-1})\ge 0$ for at least one $\V_i$, {\it i.e.}, $$\deg \mathcal L\le \mu(\V_i)\le \mu(\V_1).$$
If $\mathcal L$ is a quotient line bundle, then the dual bundle $\mathcal L^*$ is a line subbundle of $\E^*=\V_1^*\oplus \cdots \oplus \V_k^*$. Each $\V_i^*$ is  a semi-stable bundle of slope $-\mu(\V_i)$ and  $\mu(\V_1^*)\le \cdots \le \mu(\V_k^*)$.  By the line subbundle case, $$\deg \mathcal L^*\le \mu(\V_k^*)=-\mu(\V_k),$$ which is $\deg \mathcal L\ge \mu(\V_k)$.
\end{proof}

When the base is $\mathbb P^1$,  we will need the following lemma.

\begin{lemma} \label{GW'}
Let  $\V= \mathcal O(1)^{k} \oplus \mathcal O(2)^{n-k}$ over $\mathbb P^1$ and $s^*$ the section from  the  quotient line bundle $\mathcal O(1)$. 
Then the GW invariant of $[s^*]$  is nonzero. 
\end{lemma}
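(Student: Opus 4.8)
The plan is to realize the Gromov--Witten invariant as a transverse classical count on an explicitly identified, unobstructed moduli space of sections. First I would pin down the class and the section family. Since $s^*$ is the genuine section (a $1$-section) attached to the quotient line bundle $\mathcal O(1)$, Lemma \ref{m-section pairing} with $m=1$ gives $[s^*]=\deg(\mathcal O(1))\,l+\eta=l+\eta$. By the universal property of $\mathcal O_{\mathbb P(\V)}(1)$, holomorphic sections in this class are the same as surjections $\V\to\mathcal O(1)\to 0$. Since $\mathrm{Hom}(\V,\mathcal O(1))=H^0(\mathbb P^1,\V^*\otimes\mathcal O(1))=H^0(\mathcal O^{k}\oplus\mathcal O(-1)^{n-k})=\mathbb C^{k}$, and such a map is surjective exactly when its restriction to the $\mathcal O(1)^{k}$ summand is nonzero, the space of these sections is precisely $\mathbb P^{k-1}=\mathbb P(\mathbb C^{k})$.

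Next I would argue that this $\mathbb P^{k-1}$ is the entire moduli space $\overline M_{0,0}(\mathbb P(\V),[s^*])$, so that no boundary or bubbling contribution can occur. A genus-$0$ stable map in class $l+\eta$ splits into a section component of class $(\deg\mathcal M)\,l+\eta$ together with fibre components that are multiples of $l$; here $\deg\mathcal M\ge 1$ because, by Lemma \ref{slope estimate}(2), the minimal-degree quotient line bundle of $\V$ is $\mathcal O(1)$. As the $\eta$-coefficient of $[s^*]$ equals $1$ there is a single section component, and the leftover fibre class is $(1-\deg\mathcal M)\,l$, which is effective only when $\deg\mathcal M=1$. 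This forbids bubbling and multisections, so the moduli space is exactly the smooth, compact $\mathbb P^{k-1}$ found above.

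Then I would check unobstructedness and evaluate a nonzero invariant. For the section $s^*$, whose corresponding kernel sub-bundle is $\mathcal S=\mathcal O(1)^{k-1}\oplus\mathcal O(2)^{n-k}$, the vertical normal bundle is $N_{s^*}=\mathcal S^*\otimes\mathcal O(1)=\mathcal O^{k-1}\oplus\mathcal O(-1)^{n-k}$; hence $f^*T\mathbb P(\V)\cong\mathcal O(2)\oplus N_{s^*}$ has vanishing $H^1$, and the same holds for every member of the family since each kernel is abstractly $\mathcal O(1)^{k-1}\oplus\mathcal O(2)^{n-k}$. Thus the moduli space is unobstructed of the expected dimension $k-1$ and its virtual class is the fundamental class of $\mathbb P^{k-1}$. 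I would then take the one-pointed invariant with insertion $\mathrm{ev}_1^*(\xi^{k-1}F)$, i.e. the marked point constrained to a generic linear $\Lambda\cong\mathbb P^{n-k}$ inside a fixed fibre $\mathbb P(\V_p)\cong\mathbb P^{n-1}$. Evaluation at $p$ embeds the family $\mathbb P^{k-1}$ \emph{linearly} into $\mathbb P^{n-1}$, and two generic linear subspaces of complementary dimensions $k-1$ and $n-k$ in $\mathbb P^{n-1}$ meet transversally in a single point, so this invariant equals $1\ne 0$.

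The crux is the second step: guaranteeing that the Gromov--Witten moduli space carries no excess or boundary strata, so that it coincides with the naive space of sections $\mathbb P^{k-1}$. This is precisely where the positivity of $\V$ enters, through the quotient--slope bound of Lemma \ref{slope estimate}, which prevents an effective curve in class $\eta$ and hence any bubbling. Once that is secured, the vanishing $H^1(f^*T\mathbb P(\V))=0$ reduces the invariant to the elementary intersection of two complementary linear subspaces in a projective space.
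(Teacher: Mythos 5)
Your proof is correct, and it is genuinely more self-contained than the paper's. The paper handles all the key points -- compactness of the moduli space, its identification with $\mathbb P^{k-1}$, and the vanishing of the obstruction -- by citing Qin--Ruan's computations for (Fano) projective bundles \cite{QR}: there $[s^*]$ is recognized as the extremal class $A_2$, the unparametrized moduli space is shown to be compact and equal to $\mathbb P^{k-1}$ (the trivial quotient line bundles of $\mathcal O(1)^k$), and the obstruction bundle is trivial by (5.7) of \cite{QR}. You prove each of these facts directly: the identification of sections in class $l+\eta$ with surjections $\V\to\mathcal O(1)$ via the universal property of $\mathcal O_{\mathbb P(\V)}(1)$ and the computation $\mathrm{Hom}(\V,\mathcal O(1))=\mathbb C^k$; compactness via the decomposition of a stable map into a section component plus fiber bubbles, where Lemma \ref{slope estimate} forces the section class to have $\xi$-degree at least $1$ so that no effective fiber class is left over (this replaces the extremality/Fano argument of \cite{QR}); unobstructedness via $H^1(N_{s^*})=H^1(\mathcal O^{k-1}\oplus\mathcal O(-1)^{n-k})=0$, with the splitting $f^*T\mathbb P(\V)\cong \mathcal O(2)\oplus N_{s^*}$ legitimately induced by $ds$ since $f$ is a section; and finally an explicit one-point invariant, with insertion $\mathrm{ev}_1^*(\xi^{k-1}F)$ Poincar\'e dual to a generic linear $\mathbb P^{n-k}$ in a fiber, evaluated to $1$ because $\mathrm{ev}$ at a fixed fiber embeds $\mathbb P^{k-1}$ linearly into $\mathbb P^{n-1}$. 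What the paper's route buys is brevity and a conceptual placement of $[s^*]$ as an extremal class of a Fano bundle; what yours buys is independence from \cite{QR}, reliance only on lemmas already established in the paper plus standard deformation theory on $\mathbb P^1$, and a sharper conclusion (the invariant equals $1$, not merely that some invariant is nonzero). The one point worth making explicit in your compactness step is that ghost components are excluded by stability in genus $0$ with no marked points, but this is routine.
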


\begin{proof}
Observe that  $\mathbb P(\V)$ is Fano so we 
 can apply the calculation in Sections 2 and  5 of  \cite{QR} here. 
By the paragraph above Lemma 2.2 in \cite{QR}, $[s^*]$ is the extremal class $A_2=\xi^{n-1}+(1-c_1)F\xi^{n-2}$ and the unparametrized moduli space $\mathcal M([s^*], 0)$ is compact. 
Since $$c_1=\int_Xc_1( \V) =k+2(n-k)=2n-k,$$ $\xi^n=c_1$ and 
 $-K_{\mathbb P(\V)}=(2-c_1)F+n\xi$, we have  
$$\langle -K_{\mathbb P(\V)}, s^*\rangle = (2-c_1) +(1-c_1)n+n\xi^n=2-c_1+n=2-(n-k).$$
 Thus, by (3.4) in \cite{QR},  the GW dimension of unparametrized moduli space is     $ 2-(n-k) +(n-3)(1-0)=k-1$. 
In fact, from the proof of Lemma 2.3 (ii) and (5.6) in  \cite{QR}   the moduli space $\mathcal M([s^*], 0)$ 
is  $\mathbb P^{k-1}$, which is identified with the space of trivial quotient line bundles of $\mathcal O(1)^k$. And the obstruction bundle is trivial by (5.7) in \cite{QR}. 
\end{proof}

\subsection{K\"ahler cone and restricted K\"ahler cone}

\subsubsection{The K\"ahler cone of $\mathbb P(\E)$}
Let $\E$ still be a holomorphic vector bundle over $\Sigma$. We determine the K\"ahler cone of $\mathbb P(\E)$ for various types of $\E$. In particular, we  establish a  criterion for the K\"ahler cone to be the forward cone. As our objects are projective bundle $\mathbb P(\E)$, and in particular they are projective, so the K\"ahler cone is the real extension of ample cone. Since $H^{2, 0}$ vanishes for $\mathbb P(\E)$, it follows from  Kleiman's cirterion that a  class in $H^2(\mathbb P(\E);\mathbb R)$ is K\"ahler  if and only if it is positive on the closure of the cone of curves.  

A K\"ahler class is in the forward cone since the line class $l$  is in the curve cone. 
Let $Z$ be an effective curve which is an $m-$section. Then $Z$ corresponds to a quotient line bundle $\mathcal M_Z$ of 
$s^m\E$,  and by Lemma \ref{m-section pairing}, 
$[Z]=al+m\eta$ with  $a=\deg \mathcal M_Z$.

\begin{lemma}\label{kah=pos}
Suppose  $u=x\xi+yF$ is in the forward cone.
For an $m-$section $Z$ with $[Z]=al+m\eta$,
$\langle u, [Z]\rangle>0$ if and only if $$\frac{y}{x}>-\frac{a}{m}.$$
Consequently,    $u$  is in the K\"ahler cone if  there is $\alpha>0$ such that every $m-$section $Z$  satisfies   $\frac{a}{m}\geq \alpha$ and 
$\rho_{\pi}(u)>\deg(\E)-{\rm rank}(\E) \alpha$.

In particular,  the  K\"ahler cone
of $\mathbb P({\E})$ is the forward cone if 
\begin{equation}\label{ah} \frac{a}{m}  \geq  \frac{\deg (\E) }{{\rm rank} (\E)}. 
\end{equation}
\end{lemma}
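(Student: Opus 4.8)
The plan is to split the lemma into its three assertions and prove them in order, using only Kleiman's criterion (recalled at the start of this subsection), the pairing data of Lemma~\ref{cubic-kahler}(4), and the ratio formula $\rho_{\pi}(u)=\deg(\E)+n\,y/x$ of Lemma~\ref{cubic-kahler}(6). Throughout I write $n=\mathrm{rank}(\E)$ and $d=\deg(\E)$.

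First I would dispatch the iff by a bare pairing computation. With $u=x\xi+yF$ and $[Z]=al+m\eta$, the relations $\langle\xi,l\rangle=1$, $\langle\xi,\eta\rangle=0$, $\langle F,l\rangle=0$, $\langle F,\eta\rangle=1$ give $\langle u,[Z]\rangle=ax+my$. Since $u$ lies in the forward cone we have $\langle u,l\rangle=x>0$, and an $m$-section has $m\geq 1$, so $ax+my=mx\bigl(\tfrac{a}{m}+\tfrac{y}{x}\bigr)$ is positive exactly when $\tfrac{y}{x}>-\tfrac{a}{m}$.

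For the sufficient condition the strategy is to trap the whole closed cone of curves inside one explicit two-dimensional cone on which positivity of $u$ is transparent. Every irreducible curve in $\mathbb P(\E)$ is either contained in a fibre, giving the class $l$, or is a multisection, so $NE(\mathbb P(\E))$ is generated by $l$ together with the multisection classes $[Z]=al+m\eta$. The hypothesis $\tfrac{a}{m}\geq\alpha$ for every multisection says precisely that each generator lies in the closed cone $C_{\alpha}=\{\,pl+q\eta:\ q\geq 0,\ p\geq\alpha q\,\}$, whence $\overline{NE}(\mathbb P(\E))\subseteq C_{\alpha}$; the cone $C_{\alpha}$ has exactly the two extremal rays spanned by $l$ and by $\alpha l+\eta$. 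I then evaluate $u$ on these two generators: $\langle u,l\rangle=x>0$ by the forward-cone condition, and $\langle u,\alpha l+\eta\rangle=\alpha x+y$, where the hypothesis $\rho_{\pi}(u)>d-n\alpha$ is, via $\rho_{\pi}(u)=d+n\,y/x$, equivalent to $y/x>-\alpha$, i.e.\ to $\alpha x+y>0$. Being strictly positive on both generators of $C_{\alpha}$, the class $u$ is positive on all of $C_{\alpha}\setminus\{0\}$, hence on $\overline{NE}(\mathbb P(\E))\setminus\{0\}$, so Kleiman's criterion makes $u$ K\"ahler.

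Finally, for the ``in particular'' statement I would run the same containment argument with $\alpha=d/n$: condition \eqref{ah} gives $\overline{NE}(\mathbb P(\E))\subseteq C_{d/n}$, and for any $u$ in the forward cone the two checks $x>0$ and $(d/n)x+y>0$ hold automatically (the latter being exactly the forward-cone inequality $y/x>-d/n$), so every forward-cone class is K\"ahler; since a K\"ahler class always lies in the forward cone (as $l\in\overline{NE}$ and $u^{n}>0$), the two cones coincide. The step I expect to be the crux is the passage to the closure $\overline{NE}$: Kleiman's criterion demands positivity on the closed cone, not merely on the countably many genuinely effective curve classes, and it is the \emph{uniform} slope bound $a/m\geq\alpha$ — rather than one holding only in the limit — that forces $\overline{NE}\subseteq C_{\alpha}$ and lets the strict positivity on the two generators of $C_{\alpha}$ propagate to the limiting extremal ray. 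Checking that no effective class escapes $C_{\alpha}$, i.e.\ that the generation of $NE(\mathbb P(\E))$ by $l$ and multisections together with the slope bound really pins down the closed cone, is where the geometric content sits.
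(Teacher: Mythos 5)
Your proposal is correct and takes essentially the same route as the paper: the identical pairing computation $\langle u,[Z]\rangle=ax+my=mx\bigl(\tfrac{a}{m}+\tfrac{y}{x}\bigr)$ for the equivalence, then Kleiman's criterion checked against the line class $l$ and the multisection classes, with $\alpha=d/n$ giving the final statement. The only difference is that you make explicit the passage to the \emph{closed} curve cone by trapping $\overline{NE}(\mathbb P(\E))$ inside the closed cone $C_\alpha$ spanned by $l$ and $\alpha l+\eta$ --- a point the paper's proof leaves implicit --- which is a welcome tightening rather than a different argument.
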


\begin{proof} 
Since the class   $u=x\xi+yF$ is in the forward  cone  we have $x>0$ and 
$dx+ny>0$    by Lemma \ref{cubic-kahler} (5).
The first claim follows from 
$$\langle u, [Z]\rangle=\langle x\xi+yF, al+m\eta\rangle=ax+my=mx(\frac{a}{m}+\frac{y}{x}).$$
Set $d=\deg(\E)$ and $n= {\rm rank} (\E)$.  By  Lemma \ref{cubic-kahler} (6), the ratio of $u=x\xi+yF$ is given by $$\rho_{\pi}(u)=d +n \frac{y}{x}.$$
Therefore, for an $m-$section $Z$ with $[Z]=al+m\eta$,
$$\langle u, [Z]\rangle=mx(\frac{a}{m}+\frac{y}{x})\, >0$$
if $\frac{a}{m}\geq \alpha$ and 
$\rho_{\pi}(u)>d-n \alpha$.

The last statement is clear by taking $\alpha=d/n$. 
\end{proof}

\begin{prop} \label {stablecubic}
The K\"ahler cone
of $\mathbb P({\mathcal V})$ is the forward cone if  ${\mathcal V}$ is a semi-stable rank $n$ bundle. 

In the rank $2$ case, the converse is also true. 
\end{prop}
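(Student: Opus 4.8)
The plan is to split the statement into its two implications and treat each through the curve-cone description supplied by Lemma \ref{kah=pos}. For the forward direction the whole problem reduces to verifying the sufficient condition \eqref{ah}, namely that every $m$-section $Z$ satisfies $\frac{a}{m}\geq \frac{\deg\V}{\mathrm{rank}\,\V}$, where $a=\deg\mathcal M_Z$ and $\mathcal M_Z$ is the quotient line bundle of $s^m\V$ attached to $Z$ by Lemma \ref{m-section pairing}.

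First I would invoke Lemma \ref{degree}: when $\V$ is semistable, $s^m\V$ is again semistable with slope $m\,\mu(\V)$. Since any line-bundle quotient of a semistable bundle has slope at least the slope of the bundle, this forces $a=\mu(\mathcal M_Z)\geq \mu(s^m\V)=m\,\mu(\V)$, that is $\frac{a}{m}\geq \mu(\V)=\frac{\deg\V}{\mathrm{rank}\,\V}$. This is precisely \eqref{ah}, so Lemma \ref{kah=pos} immediately yields that the K\"ahler cone coincides with the forward cone. This direction is essentially a bookkeeping assembly of the earlier lemmas and I expect no difficulty.

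For the converse in rank $2$ I would argue by contrapositive, exhibiting a forward-cone class that is not K\"ahler whenever $\V$ fails to be semistable. Non-semistability of a rank $2$ bundle produces a saturated line subbundle $\mathcal L\subset\V$ with $\deg\mathcal L>\mu(\V)$, whose quotient $\mathcal Q=\V/\mathcal L$ is a line bundle of degree $\deg\mathcal Q=\deg\V-\deg\mathcal L<\mu(\V)$. In the Grothendieck convention this quotient is exactly a holomorphic section $s:\Sigma\to \mathbb P(\V)$, hence an effective $1$-section $Z$ with $m=1$ and, by Lemma \ref{m-section pairing}, $a=\deg\mathcal Q$. By Lemma \ref{kah=pos} a forward-cone class $u=x\xi+yF$ has $\langle u,[Z]\rangle>0$ only when $\frac{y}{x}>-\deg\mathcal Q$, whereas membership in the forward cone requires merely $\frac{y}{x}>-\mu(\V)$ by Lemma \ref{cubic-kahler}. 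Since $\deg\mathcal Q<\mu(\V)$, the band $-\mu(\V)<\frac{y}{x}\leq-\deg\mathcal Q$ is nonempty and consists of forward-cone classes pairing non-positively with the effective curve $Z$; such classes are not K\"ahler by Kleiman's criterion, so the K\"ahler cone is strictly smaller than the forward cone.

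The main obstacle I anticipate is the converse rather than the forward implication: one must correctly translate the destabilizing subbundle into an \emph{effective} curve under the quotient convention and confirm that it genuinely shrinks the cone. The delicate points are the saturation step (passing to a sub-line-bundle so that $\mathcal Q$ is locally free and defines an honest section) and the numerical identification of $[Z]$ via Lemma \ref{m-section pairing}; once these are in place the strict inequality $\deg\mathcal Q<\mu(\V)$ does the rest.
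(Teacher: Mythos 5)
Your proposal is correct and takes essentially the same route as the paper: the forward direction is the identical combination of Lemma \ref{degree} (semistability of symmetric powers) with Lemma \ref{kah=pos}, and the rank-$2$ converse uses the same key idea of turning a destabilizing quotient line bundle $\mathcal Q$ into a section $Z$ with $[Z]=\deg(\mathcal Q)\,l+\eta$ via Lemma \ref{m-section pairing}. The only cosmetic difference is in exhibiting the violating class: the paper uses integrality of $\deg \mathcal Q$ to write down the explicit forward class $u=4\xi+(1-2d)F$ pairing negatively with $[Z]$, while you observe that the whole band of slopes $-\mu(\V)<\frac{y}{x}\leq -\deg\mathcal Q$ consists of forward classes that are not K\"ahler; both follow from the same inequality.
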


\begin{proof}
Since $\V$ is semi-stable, so are all symmetric powers $s^m(\V)$. Hence, by Lemma \ref{degree},  for any quotient line bundle $\mathcal M$ of $s^m\V$, we have 
$$\deg(\mathcal M)\geq \mu(s^m\V)= m\,\mu(\V)=m\frac{\deg (\V)}{n}.$$ 
Therefore if we set $d=\deg(\V)$ and apply it to $\mathcal M_Z$ with  $[Z]=al+m\eta$, then
$$a\, \geq \, \frac{md}{n}.$$
The first statement now follows from Lemma \ref{kah=pos}. 

In the rank $2$ case, if $\V$ is not semi-stable, then there is a quotient  line bundle with degree strictly less than $\frac{\deg (\V)}{2}$. Since degree is an integer, 
for the corresponding section $Z$ and $u=x\xi+yF$, we have 
$$\langle u, [Z]\rangle=\langle x\xi+yF, al+\eta\rangle=ax+y\, \leq  \frac{\deg (\V)-1}{2}x+y.$$
If we choose $u=4\xi+(1-2d)F$, then $u$ satisfies $u^2>0$ but pairs negatively with $[Z]$. 
\end{proof}

It is easy to see that   for a rank $r$ bundle 
$\V$, if the K\"ahler cone
of $\mathbb P({\mathcal V})$ is the forward cone  then there is no quotient  line bundle with degree strictly less than $\frac{\deg (\V)}{r}$.

\begin{lemma}\label{completedec}
Suppose $\V=\oplus_j \mathcal L_j $ with degrees $a_j$  and $a_1=\min a_i$.  If we view a line bundle  summand  also  as a quotient line bundle 
and let $C_i$ be the corresponding section, 
then the curve cone of $\mathbb P(\V)$ is bounded  by $[C_1]=a_1 l+ \eta$ and $l$.

The ratio of the K\"ahler cone is $\sum(a_j-a_1)$.
If  $a_1= 0$, then the ratio of the K\"ahler cone is  $\deg(\V)$. 
\end{lemma}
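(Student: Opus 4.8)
The plan is to compute the closed cone of curves $\overline{NE}(\mathbb{P}(\V))$, which lives in the two-dimensional space $H_2(\mathbb{P}(\V);\R)$ with integral basis $l,\eta$, and then pass to the K\"ahler cone by Kleiman duality. One boundary ray is visibly $\R_{\geq 0}\,l$, since a line in a fibre is effective. For the other boundary ray I would single out the section $C_1$ determined by the quotient line bundle $\mathcal L_1$ of smallest degree $a_1$: viewing the projection $\V\to\mathcal L_1$ as a quotient presents $C_1$ as a $1$-section with associated quotient bundle $\mathcal M_{C_1}=\mathcal L_1$ of degree $a_1$, so Lemma \ref{m-section pairing} gives $[C_1]=a_1 l+\eta$ directly.

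The heart of the argument is to show that every effective curve class lies in the closed cone spanned by $l$ and $[C_1]$. By the dichotomy recorded above Lemma \ref{multisection}, an irreducible curve is either a fibre line (a multiple of $l$) or an $m$-section $Z$ with $[Z]=al+m\eta$ and $a=\deg\mathcal M_Z$ for a quotient line bundle $\mathcal M_Z$ of $s^m\V$. In the coordinates $l=(1,0)$, $[C_1]=(a_1,1)$, membership in the cone on these two rays is exactly the inequality $a\geq m a_1$. To verify it I would decompose
$$s^m\V=\bigoplus_{k_1+\cdots+k_n=m}\mathcal L_1^{\otimes k_1}\otimes\cdots\otimes\mathcal L_n^{\otimes k_n},$$
a direct sum of line bundles (hence of semistable bundles) whose degrees $\sum_i k_i a_i$ attain their minimum $m a_1$ at the summand $\mathcal L_1^{\otimes m}$, using $a_1=\min_i a_i$. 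Lemma \ref{slope estimate}(2) then forces every quotient line bundle of $s^m\V$, in particular $\mathcal M_Z$, to have degree at least this minimal slope, so $a=\deg\mathcal M_Z\geq m a_1$. Since $C_1$ realises the extreme case $a/m=a_1$, the ray through $[C_1]$ is genuinely extremal, and $\overline{NE}(\mathbb{P}(\V))$ is precisely the closed cone on $l$ and $[C_1]$.

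With the curve cone in hand, Kleiman's criterion (applicable since $\mathbb{P}(\V)$ is projective and $H^{2,0}$ vanishes) identifies the K\"ahler cone as the classes $u=x\xi+yF$ pairing positively with both generators. Using the pairings of Lemma \ref{cubic-kahler}(4), one has $\langle u,l\rangle=x$ and $\langle u,[C_1]\rangle=a_1 x+y$, so the K\"ahler cone is cut out by $x>0$ and $y/x>-a_1$. Feeding this into the ratio formula $\rho_\pi(u)=d+n\,y/x$ of Lemma \ref{cubic-kahler}(6), with $d=\deg\V=\sum_j a_j$ and $n=\mathrm{rank}\,\V$, yields the sharp bound $\rho_\pi(u)>d-n a_1=\sum_j(a_j-a_1)$; the effectivity of $C_1$ shows no class with ratio $\leq\sum_j(a_j-a_1)$ can be K\"ahler, so this value is exactly the ratio of the K\"ahler cone. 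The final assertion is then the specialisation $a_1=0$, where $\sum_j(a_j-a_1)=\sum_j a_j=\deg\V$.

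The main obstacle is the middle step, namely pinning down the extremal ray by bounding the degree of the quotient line bundle attached to an \emph{arbitrary} multisection; the rest is bookkeeping with the pairings and the ratio formula. The only delicacy is that multisections correspond merely to those quotient line bundles of $s^m\V$ arising from decomposable symmetric tensors, but since Lemma \ref{slope estimate}(2) bounds \emph{all} quotient line bundles uniformly, this restriction causes no difficulty.
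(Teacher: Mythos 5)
Your proposal is correct and follows essentially the same route as the paper: bound the degree of the quotient line bundle $\mathcal M_Z$ attached to an $m$-section by decomposing $s^m\V$ into line bundles and applying Lemma \ref{slope estimate}, identify the extremal rays as $l$ and $[C_1]=a_1l+\eta$, and then read off the K\"ahler cone and its ratio via Kleiman's criterion and the formula $\rho_\pi(u)=d+n\,y/x$. Your closing remark about decomposable symmetric tensors correctly addresses the same subtlety the paper flags in the remark following Lemma \ref{multisection}, and it is handled the same way, by bounding all quotient line bundles uniformly.
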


\begin{proof}
If $Z \rightarrow X$ is of degree $m$, then $Z$ corresponds to a quotient line bundle $\mathcal M_Z$ of $s^m\V$.
Observe that  $s^m\V$ is a direct sum of line bundles and the minimal degree of these line bundles is  $ma_1$.
Since line bundles are stable,  we get by  Lemma \ref{slope estimate}  
that $\deg \mathcal M_Z\ge ma_1$.
This means that   $\langle \xi, [Z]\rangle  \ge ma_1$ if $\langle  F, \xi\rangle=m$. 
On the other hand, $C_1$ is an effective curve with $\langle F, [C_1]\rangle =1$ and $\langle \xi, [C_1]\rangle =a_1$. Hence the two extremal rays are $[C_1]$ and $l$.

Recall $\eta$ is the class with $\langle \xi, \eta\rangle =0$ and $\langle F, \eta \rangle=1$. 
Then $[C_1]=a_1l+\eta$. 
By   the Kleiman criterion, $u=x\xi+yF$ is in the  K\"ahler cone of $\mathbb P(\V)$ if and only if 
\begin{equation}\label{x}
\langle x\xi +yF,  l\rangle=x>0, \quad   \langle x\xi+yF, [C_1]\rangle=a_1x+y>0.
\end{equation}
Write 
$$\rho(u)=\frac{x^{n-1}((\sum a_j) x+ny)}{x^n}= \sum(a_j-a_1)+n(a_1x+y)/x.$$
By \eqref{x}, the ratio of the K\"ahler cone is $\sum(a_j-a_1)$.
In particular,    when   $a_1=0$,  the K\"ahler cone  has ratio 
$ \deg (\V).$
\end{proof}

The above discussion  follows more clearly from Hartshorne's  theorem by looking at the twisting $\E\otimes \mathcal L_1^{-1}$ and notice $\mathbb P(\E)=\mathbb P(\E\otimes \mathcal L_1^{-1})$. Abusing the notation, we denote the new bundle to be $\E$ as well. By Theorem \ref{Hartshorne}, $\E$ is nef. When we take $C$ corresponding to the line bundle quotient $\mathcal O$, $\langle \xi, [C]\rangle=0$. Hence the boundary of curve cone is spanned by $[C]$ and $l$.

\subsubsection{The ratio of the restricted K\"ahler cone}

Let $\V$ be a rank $n$  quotient of $\E$.  
We introduce  the ratio of             the restricted K\"ahler cone  $\rho( \V, \E)$,
\begin{equation}\nonumber
\rho (\V, \E)=\inf \{\rho(u|_{\mathbb P(\V)})| u \hbox{ is the  class of a K\"ahler  form on $\mathbb P(\E)$}\}.
\end{equation} 
Recall that $\mathbb P_s(\V)=\mathbb P(\V^*)$ so the normal type   of $\mathbb P(\V)$  is actually $\deg(\V)$ rather than $-\deg(\V)$. 
We have seen  that the K\"ahler cone of a semi-stable   bundle is maximal. For the direct sum of a semi-stable  bundle and a line bundle we have

\begin{lemma}\label{sum}  Let $\mathcal E=\mathcal V\oplus \mathcal L$, where $\mathcal V$ is a semi-stable  bundle with
$\mu(\mathcal L)\geq  \mu(\mathcal V)$. 
Then the restricted K\"ahler cone of $\mathbb P(\V)$ is the forward cone. 
\end{lemma}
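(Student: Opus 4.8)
The plan is to compute the full K\"ahler cone of $\PP(\E)$ and show that its image under restriction to $\PP(\V)$ fills out the entire forward cone. First I would record that $\PP(\V)$ sits inside $\PP(\E)$ as the subbundle cut out by the quotient $\E\to\V$ (projection off the $\mathcal L$ summand), and that under this inclusion the Serre class and fiber class restrict as $\xi_{\E}|_{\PP(\V)}=\xi_{\V}$ and $F_{\E}|_{\PP(\V)}=F_{\V}$. Consequently, writing classes in the basis $\{\xi,F\}$, the restriction map $H^2(\PP(\E);\R)\to H^2(\PP(\V);\R)$ is the identity on the coordinates $(x,y)$, so the restricted K\"ahler cone of $\PP(\V)$ is literally the K\"ahler cone of $\PP(\E)$ read in these coordinates. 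Since the restriction of a K\"ahler form is K\"ahler, and since the K\"ahler cone of $\PP(\V)$ is the forward cone by Proposition \ref{stablecubic} ($\V$ being semistable), one inclusion is immediate: the restricted K\"ahler cone is contained in the forward cone of $\PP(\V)$. Everything then reduces to the reverse inclusion.

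The crux is a uniform lower bound on the curves of $\PP(\E)$: every irreducible curve that is not a fiber line is an $m$-section $Z$ with $[Z]=al+m\eta$, and I claim $a/m\ge\mu(\V)$. To see this I would invoke Lemmas \ref{multisection} and \ref{m-section pairing} to identify $a=\deg\mathcal M_Z$ with the degree of a quotient line bundle $\mathcal M_Z$ of $s^m\E$, and then decompose
\[
s^m\E \;=\; \bigoplus_{i=0}^{m} s^i(\V)\otimes \mathcal L^{\otimes(m-i)}.
\]
By Lemma \ref{degree} each $s^i(\V)$ is semistable, hence so is each summand $s^i(\V)\otimes\mathcal L^{m-i}$, with slope $i\,\mu(\V)+(m-i)\,\mu(\mathcal L)$. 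Because $\mu(\mathcal L)\ge\mu(\V)$, this slope is a nonincreasing function of $i$ and is therefore minimized at $i=m$, where it equals $m\,\mu(\V)$. Lemma \ref{slope estimate}(2) then bounds the degree of any quotient line bundle of $s^m\E$ below by this minimal summand slope, giving $a=\deg\mathcal M_Z\ge m\,\mu(\V)$. This step, and in particular the observation that the hypothesis $\mu(\mathcal L)\ge\mu(\V)$ is exactly what forces the minimal slope to come from the top symmetric power $s^m(\V)$, is where I expect the real work to lie.

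With the bound in hand I would finish by Kleiman's criterion. Given any class $u=x\xi_{\V}+yF_{\V}$ in the forward cone of $\PP(\V)$, so that $x>0$ and $y/x>-\mu(\V)$ by Lemma \ref{cubic-kahler}, lift it to $\tilde u=x\xi_{\E}+yF_{\E}$ on $\PP(\E)$. On the fiber line $\langle\tilde u,l\rangle=x>0$, while on any $m$-section $Z$ one computes
\[
\langle \tilde u,[Z]\rangle = ax+my = mx\Bigl(\tfrac{a}{m}+\tfrac{y}{x}\Bigr)\ \ge\ mx\Bigl(\mu(\V)+\tfrac{y}{x}\Bigr)\ >\ 0,
\]
using $a/m\ge\mu(\V)$ together with $y/x>-\mu(\V)$. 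Thus $\tilde u$ is positive on every effective curve, hence on the closure of the cone of curves, and since $\PP(\E)$ is projective with $H^{2,0}=0$ it is a K\"ahler class by Kleiman's criterion. Its restriction is the prescribed class $u$, so the reverse inclusion holds and the restricted K\"ahler cone of $\PP(\V)$ equals the forward cone.
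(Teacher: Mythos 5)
Your proof is correct and follows essentially the same route as the paper's: the identical decomposition $s^m\E=\bigoplus_i s^{m-i}(\V)\otimes\mathcal L^{i}$, the same use of Lemmas \ref{degree} and \ref{slope estimate} to bound $\deg\mathcal M_Z\ge m\mu(\V)$ using $\mu(\mathcal L)\ge\mu(\V)$, and the same conclusion via positivity on all curves (the paper quotes Lemma \ref{kah=pos}, which you simply unwind into Kleiman's criterion). The extra details you supply --- the compatibility $\xi_\E|_{\PP(\V)}=\xi_\V$, $F_\E|_{\PP(\V)}=F_\V$ and the easy inclusion of the restricted cone into the forward cone --- are left implicit in the paper but do not constitute a different argument.
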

\begin{proof}  
 Let $Z$ be a multisection with $\langle F, [Z]\rangle=m$.  Then $Z$ corresponds to a quotient line bundle $\mathcal M_Z$ of $s^m(\E)$.  In our case, 
 \begin{equation} \label{symsum} s^m(\mathcal E)=s^m(\mathcal V\oplus \mathcal L)=s^m(\mathcal V)\oplus s^{m-1}(\mathcal V)\mathcal L\oplus \cdots\oplus s^{m-i}(\mathcal V)\mathcal L^i\oplus \cdots \oplus \mathcal L^m. 
\end{equation}
Since the symmetric powers of a semi-stable bundle over a curve are semi-stable, each summand in \eqref{symsum} is semi-stable. 
By Lemma \ref{degree},  we have 
$$\mu(s^{m-i}(\mathcal V)\mathcal L^i )=\mu(s^{m-i}(\mathcal V))+\mu(\mathcal L^i ) =(m-i)\mu(\mathcal V)+i\mu( \mathcal L)\geq m\mu(\V)$$
since $\mu(\mathcal L)\geq  \mu(\mathcal V)$. 
So  the minimal slope of the summands in \eqref{symsum} is achieved by $s^m(\V)$. Since each summand is semi-stable, by Lemma \ref{slope estimate}  the quotient line bundle $\mathcal M_Z$ of $s^m(\E)$ satisfies 
 $$\deg \mathcal M_Z\geq \mu (s^m \V)= m\mu(\V)=m\frac{\deg (\V)}{{\rm rank} (\V)}.$$
 By Lemma \ref{kah=pos}, $u=x\xi_{\E}+yF_{\E}$ is in the K\"ahler cone of $\mathbb P(\E)$ if 
 $y/x> -\deg(\V)/ {\rm rank} (\V)$.
 Since $u|_{\mathbb P(\V)}=x\xi_{\V}+yF_{\V}$,  it follows that the restricted K\"ahler cone of $\mathbb P(\V)$ is  the forward cone. 
\end{proof}

\begin{lemma}\label{normal p<0 blow-down}  
 Given the normal type $qn+t<0$ with $0\leq t\leq n-1$,  
 there exists $\V$ with  deg $\V=qn+t$ such that
 \begin{enumerate}[label=(\roman*)]
\item  
 $   \rho (\V, \V\oplus \mathcal O) =0$ if $g(\Sigma)>0$;
 
 \item    $\rho (\V, \V\oplus \mathcal O)=t $  if $g(\Sigma)=0$.
 \end{enumerate}
\end{lemma}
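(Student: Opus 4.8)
The plan is to reduce the whole statement to a single computation—the smallest slope $y/x$ permitted on the boundary of the K\"ahler cone of $\mathbb{P}(\E)$, where $\E=\V\oplus\mathcal{O}$—and then to exhibit an explicit $\V$ in each genus regime. The surjection $\E=\V\oplus\mathcal{O}\twoheadrightarrow\V$ embeds $\mathbb{P}(\V)\hookrightarrow\mathbb{P}(\E)$ as a sub-bundle, and under this inclusion the Serre class and the fiber class restrict compatibly, $\xi_{\E}|_{\mathbb{P}(\V)}=\xi_{\V}$ and $F_{\E}|_{\mathbb{P}(\V)}=F_{\V}$. Hence a class $u=x\xi_{\E}+yF_{\E}$ restricts to $x\xi_{\V}+yF_{\V}$, and applying Lemma \ref{cubic-kahler}(6) to the rank-$n$ bundle $\V$ gives $\rho_{\pi}(u|_{\mathbb{P}(\V)})=\deg(\V)+n\,\tfrac{y}{x}$. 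Since this is strictly increasing in $y/x$, the infimum defining $\rho(\V,\V\oplus\mathcal{O})$ is controlled entirely by the smallest value of $y/x$ allowed on the K\"ahler cone of $\mathbb{P}(\E)$:
\begin{equation*}
\rho(\V,\V\oplus\mathcal{O})=\deg(\V)+n\cdot\inf\Big\{\tfrac{y}{x}\ :\ x\xi_{\E}+yF_{\E}\ \text{is K\"ahler on}\ \mathbb{P}(\E)\Big\}.
\end{equation*}

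For $g(\Sigma)>0$ I would choose $\V$ to be a semistable bundle of rank $n$ and degree $qn+t$; such bundles exist on any curve of positive genus (for $g\ge 2$ even stable bundles of every rank and degree occur, and on an elliptic curve every indecomposable bundle is semistable and all invariants appear). Because $qn+t<0$ forces $\mu(\V)<0=\mu(\mathcal{O})$, Lemma \ref{sum} applies and the restricted K\"ahler cone of $\mathbb{P}(\V)$ is the entire forward cone. The forward cone of the rank-$n$ bundle $\V$ is cut out by $x>0$ and $y/x>-\deg(\V)/n$, so its infimal slope yields $\deg(\V)+n\bigl(-\deg(\V)/n\bigr)=0$. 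Thus $\rho(\V,\V\oplus\mathcal{O})=0$, giving (i).

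For $g(\Sigma)=0$, where every bundle splits, I would take the balanced bundle $\V=\mathcal{O}(q)^{\oplus(n-t)}\oplus\mathcal{O}(q+1)^{\oplus t}$, which has $\deg(\V)=(n-t)q+t(q+1)=qn+t$. Then $\E=\V\oplus\mathcal{O}$ is completely decomposable, and since $qn+t<0$ with $0\le t\le n-1$ forces $q\le -1$, the summand degrees satisfy $q<q+1\le 0$, so the minimal summand degree is $a_1=q$ (using $n-t\ge 1$). Lemma \ref{completedec} then identifies the boundary ray of the curve cone of $\mathbb{P}(\E)$ with $[C_1]=q\,l+\eta$, equivalently the K\"ahler cone of $\mathbb{P}(\E)$ is $\{x>0,\ y/x>-q\}$. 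Substituting the infimal slope $-q$ into the displayed formula gives $(qn+t)+n(-q)=t$, which is (ii). When $t=0$ this reduces to the semistable bundle $\mathcal{O}(q)^{\oplus n}$ and recovers $\rho=0$, consistent with (i).

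The genuinely nontrivial inputs are the two cone identifications—Lemma \ref{sum} for the positive-genus case and Lemma \ref{completedec} for the rational case—together with the classical existence of semistable bundles of prescribed rank and degree in positive genus; these are the steps I expect to require the most care, while the restriction formula and the final arithmetic are routine. The main point worth verifying carefully is that $\mathbb{P}(\V)\subset\mathbb{P}(\E)$ is the sub-bundle coming from the quotient $\E\to\V$ (not from a sub-line-bundle), so that the Serre classes restrict as claimed and the identity $\rho_{\pi}(u|_{\mathbb{P}(\V)})=\deg(\V)+n\,y/x$ is legitimate.
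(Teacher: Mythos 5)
Your proposal is correct and follows essentially the same route as the paper: for $g(\Sigma)>0$ it takes a semi-stable $\V$ of degree $qn+t$ (existence via Seshadri/Atiyah) and applies Lemma \ref{sum}, and for $g(\Sigma)=0$ it takes the same balanced bundle $\V=\mathcal O(q)^{n-t}\oplus\mathcal O(q+1)^{t}$ and applies Lemma \ref{completedec} to $\V\oplus\mathcal O$ using $q\leq -1$. Your explicit reduction formula $\rho(\V,\V\oplus\mathcal O)=\deg(\V)+n\inf\{y/x\}$ is just a cleaner packaging of the paper's identification of the restricted K\"ahler cone, and the verification that $\xi_{\E}$ restricts to $\xi_{\V}$ is the same fact the paper uses implicitly in Lemma \ref{sum}.
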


\begin{proof}
Suppose  $g(\Sigma)>0$.
Then there are semi-stable bundles of arbitrary rank $r\ge 2$ and degree $d$ over $\Sigma$ (\cite{Se} for $g\geq 2$ and  \cite{A} for $g=1$). 
Just pick $\V$ to be a semi-stable bundle with degree $qn+t\leq 0$ and apply Lemma \ref{sum}.

Now we assume $g(\Sigma)=0$ and apply Lemma \ref{completedec}. Consider the bundle $\V=\mathcal O(q)^{n-t}\oplus \mathcal O(q+1)^t$. 
By Lemma  \ref{completedec}, the extremal ray of $\PP(\V)$ is given by $\mathcal O(q)$ and $\rho(\V)=t$. 
Note that  $q\leq -1$, so the extremal ray of $\PP(\V\oplus \mathcal O)$ is also given by $\mathcal O(q)$. 
Hence the restricted K\"ahler cone is just the  K\"ahler cone. 
\end{proof}


\begin{lemma} \label{normal p>0 blow-down}
Given  the normal type  $(p-1)n+t\geq 0$ with $0\leq t\leq n-1$,   there is a completely decomposable $\V$ such that  
$\rho(\V)=t$ and $$
\rho(\V, \V\oplus \mathcal O)=\deg(\V)=(p-1)n+t.$$ 
\end{lemma}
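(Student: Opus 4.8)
The plan is to exhibit an explicit completely decomposable bundle and read off both ratios from Lemma~\ref{completedec}. Writing $d=(p-1)n+t$ for the given normal type, I first note that the constraints $d\ge 0$ and $0\le t\le n-1$ force $p-1\ge 0$: if we had $p-1\le -1$, then $d\le -n+(n-1)<0$. I would therefore take
$$\V=\mathcal O(p-1)^{n-t}\oplus \mathcal O(p)^t,$$
which exists over $\Sigma$ of any genus since line bundles of every degree do. Its degree is $(p-1)(n-t)+pt=(p-1)n+t=d$, as required.

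For the first equality I apply Lemma~\ref{completedec} to $\V$ directly. The minimal summand degree is $a_1=p-1$, and the sum of the remaining deviations is $\sum_j(a_j-a_1)=t\bigl(p-(p-1)\bigr)=t$; hence $\rho(\V)=t$. (When $t=0$ the bundle is $\mathcal O(p-1)^n$ and the ratio is trivially $0=t$.)

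For the restricted ratio I pass to $\E=\V\oplus\mathcal O=\mathcal O(p-1)^{n-t}\oplus\mathcal O(p)^t\oplus\mathcal O$. Because $p-1\ge 0$, the minimal summand degree of $\E$ is $0$, attained by the trivial factor $\mathcal O$. By Lemma~\ref{completedec} the curve cone of $\mathbb P(\E)$ is then bounded by $l$ and by the section class $[C]=0\cdot l+\eta=\eta$ attached to this $\mathcal O$ quotient. By the Kleiman criterion a class $u=x\xi+yF$ is K\"ahler on $\mathbb P(\E)$ precisely when $\langle u,l\rangle=x>0$ and $\langle u,\eta\rangle=y>0$. Restricting to $\mathbb P(\V)$ gives $u|_{\mathbb P(\V)}=x\xi_{\V}+yF_{\V}$, since $\xi_{\E}$ and $F_{\E}$ restrict to $\xi_{\V}$ and $F_{\V}$, and its ratio is $\rho_{\pi}(u|_{\mathbb P(\V)})=\deg(\V)+n\,(y/x)$ by Lemma~\ref{cubic-kahler}(6). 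As $u$ ranges over the K\"ahler cone, $y/x$ sweeps out $(0,\infty)$, so the infimum of this ratio is $\deg(\V)=(p-1)n+t$, giving $\rho(\V,\V\oplus\mathcal O)=\deg(\V)$.

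The computation is essentially mechanical once Lemma~\ref{completedec} is available; the only point that genuinely uses the hypothesis $d\ge 0$ is the inequality $p-1\ge 0$, which is exactly what guarantees that the $\mathcal O$ summand, and not one of the $\mathcal O(p-1)$ summands, supplies the extremal ray of the curve cone of $\mathbb P(\E)$. This is the crux of the argument: it drives the infimal ratio on the restricted cone down to $\deg(\V)$ rather than to the strictly larger value $\rho(\V)=t$ seen on $\mathbb P(\V)$ alone, and it is precisely where this lemma diverges from the negative normal-type case treated in Lemma~\ref{normal p<0 blow-down}.
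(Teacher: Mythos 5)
Your proof is correct and is essentially the paper's own argument: the paper takes the same bundle (written as $\V=\oplus_{i=1}^n\mathcal L_i\otimes\mathcal R$ with $\mathcal R$ of degree $p-1$ and $n-t$ trivial factors $\mathcal L_i$, i.e.\ exactly $\mathcal O(p-1)^{n-t}\oplus\mathcal O(p)^t$ up to choice of line bundles in positive genus), applies Lemma~\ref{completedec} to get $\rho(\V)=t$, and then uses $p-1\ge 0$ to see that the $\mathcal O$-section is the extremal ray of the curve cone of $\mathbb P(\V\oplus\mathcal O)$, so that the restricted ratio drops to $\deg(\V)$ via the ratio formula. Your only deviations are cosmetic — citing Lemma~\ref{cubic-kahler}(6) instead of Lemma~\ref{cubic} and spelling out the Kleiman-criterion inequalities explicitly.
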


\begin{proof} Consider a decomposable rank $n$ bundle 
$\V=\oplus_{i=1}^n \mathcal L_i\otimes \mathcal R$, where $\mathcal R$ has degree $(p-1)$, 
$\mathcal L_i=\mathcal O$ or has degree $1$. 
The number of $\mathcal O$ factors  is $n-t$. 
By Lemma \ref{completedec}, $\rho(\V)=t$.

For the completely decomposable bundle $\E=\V\oplus \mathcal O$,   the section  $s_{\mathcal O}$ is  extremal. Note this is just the
$\eta$ class of $\E$. 
Since the degree of $\mathcal L_i\otimes \mathcal R$ is $\geq 0$, by Lemma \ref{completedec}, the ratio $\rho(\V\oplus \mathcal O)$ is $\deg(\V\oplus \mathcal O)=\deg(\V)$. 
Since the $\eta$ class of $\V$ is the restriction of the $\eta$ class of $\V\oplus \mathcal O$, we apply Lemma \ref{cubic} to get the ratio $\rho(\V, \V\oplus \mathcal O)$ to be $\deg(\V)$.  
\end{proof}


\subsection{Almost standard symplectic forms}

Suppose $\pi:D\to \Sigma$ is a linear $\mathbb P^{n-1}$ bundle over a surface $\Sigma$.
Recall that  a fibred symplectic form on  $D$  is called standard if it arises from the Sternberg-Weinstein universal construction as described in Section 2.1.
In particular, a standard form restrict to a multiple of the Fubini-Study form on each fiber. 
And a  fibred symplectic form on $D$  is said to be almost standard  if it is 
 deformation 
 to a standard form via fibred forms. 
The  linear $\mathbb P^{n-1}$ bundle  $\pi:D\to \Sigma$ always has a holomorphic realization of the form $\mathbb P(\E)$. 
A K\"ahler form on $D$ refers to a K\"ahler form on any  holomorphic realization of this sort.

\begin{lemma} \label{almost standard}
Suppose $\pi:D\to \Sigma$ is a linear $\mathbb P^{n-1}$ bundle over a surface $\Sigma$.
Then the  space of almost standard forms on $D$ is path connected
and contains the   K\"ahler forms. 
\end{lemma}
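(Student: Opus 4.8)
The plan is to prove Lemma \ref{almost standard} in two parts, corresponding to its two assertions: that the space of almost standard forms is path connected, and that it contains the K\"ahler forms. I would begin by unwinding the definitions. A standard form arises from the Sternberg-Weinstein universal construction of Section 2.1, built from a choice of unitary connection $A$ on the principal bundle and a choice of fiber scaling $\epsilon$; an almost standard form is, by definition, one that is fibered-deformation equivalent to a standard form. Thus the space of standard forms is already a natural connected family, being parametrized by the contractible space of connections together with positive scaling parameters, and any two standard forms are joined by a path through standard (hence almost standard) forms. To establish path connectedness of the \emph{almost standard} forms, I would show that each almost standard form is joined by a fibered deformation path to some standard form, and then connect the target standard forms to one another within the standard family; concatenating these paths gives a path between any two almost standard forms lying entirely in the almost standard locus. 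The point to verify is that this concatenation stays fibered and symplectic throughout, which follows because being fibered is a condition preserved along the constituent paths and the symplectic condition is open along the deformation.

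For the second assertion, that K\"ahler forms are almost standard, I would fix a holomorphic realization $\mathbb P(\E)$ of $D$ (which exists since every complex bundle over a Riemann surface splits into line bundles and hence is holomorphic, as recorded in the earlier remark) and take an arbitrary K\"ahler form $\omega_K$ on it. The strategy is to produce a fibered deformation from $\omega_K$ to a standard form. Since $\mathbb P(\E) \to \Sigma$ is a holomorphic fiber bundle with fiber $\mathbb P^{n-1}$, the K\"ahler form restricts to a K\"ahler form on each fiber, which is cohomologous to a multiple of the Fubini-Study form; the uniqueness of the symplectic structure on $\mathbb P^{n-1}$ in a fixed cohomology class lets me fiberwise normalize $\omega_K$ toward the Fubini-Study model. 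More structurally, I would invoke the fact that a closed $2$-form which is symplectic along the fibers defines a symplectic fibration structure (the remark citing \cite{McS}), so $\omega_K$ is a fibered form, and the coupling/universal construction produces a standard representative in a neighborhood of its fiber cohomology class; the convex-combination path $t\omega_K + (1-t)\omega_{\mathrm{std}}$ restricts to a positive combination on each fiber and hence is fibered-symplectic for all $t\in[0,1]$, giving the required deformation.

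The step I expect to be the main obstacle is verifying that the interpolating path remains \emph{fibered symplectic} — that is, nondegenerate, not merely along the fibers but on all of $D$ — throughout the deformation, rather than only fiberwise symplectic. Nondegeneracy on the fibers is automatic from positivity of the coefficients, but global nondegeneracy of a fibered form also requires the horizontal contribution (coming from $\pi^*$ of a form on $\Sigma$ plus the coupling term) to cooperate. The standard remedy, which I would use, is that for a fibered form the symplectic condition on the total space is governed by the base form being sufficiently positive relative to the fiber scale; after first deforming the fiber scale to agree between $\omega_K$ and the standard model, one can arrange the base contributions to interpolate through the positive cone. Concretely I would split the deformation into a fiber-normalizing stage and a base-adjusting stage so that at no point does one have to control both degeneracy directions simultaneously. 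Once fiberedness and nondegeneracy are secured along the two-stage path, both conclusions of the lemma follow, and in particular every K\"ahler form lies in the path-connected almost standard locus.
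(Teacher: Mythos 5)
Your overall skeleton (standard forms form a connected family; almost standard forms reach standard ones by definition; K\"ahler forms are fibred and deform to standard ones) matches the paper's, but both halves of your argument have a genuine gap at the point where the paper does real work. For path connectedness, your claim that the standard forms are ``parametrized by the contractible space of connections together with positive scaling parameters'' is not correct as stated: a standard form also depends on the choice of area form on $\Sigma$, and, more seriously, the Sternberg--Weinstein construction produces a \emph{symplectic} form only when the scaled fiber moment image lies in the neighborhood $\mathcal W_A$ of Lemma \ref{W}. This nondegeneracy condition couples the connection, the base form and the scale, so the admissible parameter set is a proper open subset of the product, and its path connectedness is exactly the nontrivial point; a path of parameters chosen naively can pass through degenerate coupling forms (for a nontrivial bundle, a fixed base form does not admit arbitrarily large fiber scale). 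This is precisely what the paper outsources to Proposition 4.4 of \cite{McDuff96 def-iso}, which applies because $\dim \Sigma=2$, followed by scaling. Your argument is repairable (for fixed connection and base form the admissible scales form an interval accumulating at $0$, and a semicontinuity/compactness argument along a path of the other parameters lets one first dip down to a uniformly small scale), but as written the connectivity is asserted rather than proved.

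For the K\"ahler statement you take a genuinely different, and harder, route than the paper: you interpolate an arbitrary K\"ahler form $\omega_K$ with an arbitrary standard form and then must control nondegeneracy on the total space. Indeed your second paragraph's claim that the convex combination is ``fibered-symplectic for all $t$'' is false as stated---only fiberwise nondegeneracy is automatic---as you yourself recognize in the third paragraph. The Thurston-type remedy you sketch (add a large multiple of $\pi^*\sigma$, then interpolate) can be completed, but it requires two checks you never make: (i) that the restriction of $\omega_K$ to the $\omega_K$-orthogonal complement of the vertical tangent bundle pushes forward to a \emph{positively} oriented area form on $\Sigma$ (this holds because that complement is $J$-invariant and $d\pi$ is holomorphic; without it the path $\omega_K+s\pi^*\sigma$ could degenerate at some finite $s$), and (ii) a compactness argument producing a uniform constant along the whole interpolation. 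The paper's proof sidesteps all of this with one idea you are missing: there exist standard forms that are themselves K\"ahler on a holomorphic realization $\mathbb P(\E)$ (Proposition 3.18 of \cite{Voisin book}), K\"ahler forms are fibred because the fibers are holomorphic, and the space of K\"ahler forms on $\mathbb P(\E)$ is convex, hence path connected; so every K\"ahler form is joined to a standard one through K\"ahler---hence fibred and globally symplectic---forms, with no nondegeneracy issue to fight. Your proof becomes correct once (i) and (ii) are supplied, but the K\"ahler-standard-form shortcut is what makes the paper's argument short.
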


\begin{proof} 
Since the base $\Sigma$ has dimension 2, by Proposition 4.4  in \cite{McDuff96 def-iso}, the space of standard forms which restricts to the same multiple of
the Fubini-Study form is path-connected. Therefore the space of all standard forms is also path connected by scaling. 
It follows that  the  space of almost standard forms is path connected since any almost standard form is connected to a standard form via
a path of almost standard forms. 

Fix a holomorphic realization $\mathbb P(\E)$ of $D$. Since the fibers of $\mathbb P(\E)$ are holomorphic, the K\"ahler forms on $\mathbb P(\E)$ are fibred. 
Moreover, there exists  standard K\"ahler forms on $\mathbb P(\E)$
({\it c.f.} Proposition 3.18 in \cite{Voisin book}).
Since the space of K\"ahler forms on $\mathbb P(\E)$ is path connected, any K\"ahler form on $\mathbb P(\E)$ is almost standard. 
\end{proof}

\begin{remark}
There is a unique deformation class of K\"ahler structures for each topological type 
since
the moduli space of holomorphic bundles over a curve with the fixed rank and degree is connected (\cite{Mumford}). 
This implies that the subspace of K\"ahler forms is also path connected. 
\end{remark}

\begin{lemma} \label{symplectic GW}
Suppose $D$ is a linear $\mathbb P^{n-1}$ bundle over $\mathbb P^1$. 
Then $\rho_{\pi}([\omega])>t_n(D)$  for an   almost standard  form $\omega$ on $D$.   
\end{lemma}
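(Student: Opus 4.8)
The plan is to realize $D$ by a convenient Fano model, extract a nonzero Gromov--Witten invariant supported on the extremal section, and then propagate positivity of its symplectic area to every almost standard form by deformation invariance. Write $j=t_n(D)\in\{0,\dots,n-1\}$. Since linear $\mathbb P^{n-1}$-bundles over $\mathbb P^1$ are classified by their topological type, and since $\V=\mathcal O(1)^{\,n-j}\oplus\mathcal O(2)^{\,j}$ has $\deg\V=n+j$ and hence $t_n(\mathbb P(\V))=t_n(n+j)=j$, I would fix a diffeomorphism $D\cong \mathbb P(\V)$ and work with this holomorphic realization throughout, using the quotient convention and the cohomology conventions of Lemma \ref{cubic-kahler}.

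Next I would bring in the enumerative input. The quotient line bundle $\mathcal O(1)$ of $\V$ determines a section $s^*$, and by Lemma \ref{GW'} its genus-zero Gromov--Witten invariant is nonzero. By Lemma \ref{m-section pairing}, applied with $m=1$ and $\deg\mathcal M_{s^*}=\deg\mathcal O(1)=1$, its class is $[s^*]=l+\eta$. The Fano K\"ahler form $\omega_0$ on $\mathbb P(\V)$ is almost standard by Lemma \ref{almost standard}, and that same lemma shows the space of almost standard forms on $D$ is path connected and contains $\omega_0$. Since the Gromov--Witten invariant is unchanged along a path of symplectic forms, the invariant attached to $[s^*]$ remains nonzero for every almost standard $\omega$ on $D$.

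Finally I would convert this into the ratio bound. A nonzero invariant forces, for a generic $\omega$-compatible $J$, a $J$-holomorphic curve in the class $[s^*]$, whose $\omega$-area is therefore strictly positive: $\langle[\omega],[s^*]\rangle>0$. Writing $[\omega]=x\xi+yF$ in the forward cone, so that $x>0$, Lemma \ref{cubic-kahler}(4) gives $\langle[\omega],[s^*]\rangle=x+y$, whence $y/x>-1$. Then Lemma \ref{cubic-kahler}(6) yields
$$\rho_{\pi}([\omega])=\deg\V+n\,\frac{y}{x}>(n+j)-n=j=t_n(D),$$
which is the claim. (When $j=0$ the model degenerates to $\mathbb P^{n-1}\times\mathbb P^1$, and the bound $\rho_\pi>0$ is in any case automatic from the definition of the forward cone.)

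The step I expect to be the main obstacle is the propagation of the Gromov--Witten nonvanishing from the single Fano K\"ahler form to all almost standard forms: one must use that the almost standard forms constitute a connected family of symplectic forms (Lemma \ref{almost standard}) together with the deformation invariance of Gromov--Witten invariants, and that a nonzero invariant genuinely produces a pseudoholomorphic representative of positive area. Once these are secured, the remaining cohomological computation is routine.
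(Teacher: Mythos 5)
Your proposal is correct and follows essentially the same route as the paper's own proof: nonvanishing of the Gromov--Witten invariant of the extremal section class via Lemma \ref{GW'}, propagation to all almost standard forms by the connectedness in Lemma \ref{almost standard} and deformation invariance, and then the ratio computation. The only (cosmetic) difference is that you work directly in the quotient-convention Fano model $\mathbb P(\mathcal O(1)^{n-j}\oplus\mathcal O(2)^j)$, whereas the paper starts from the subspace-convention model $\mathbb P_s(\mathbb C^{k}\oplus\mathbb C(-1)^{n-k})$ and twists to reach the same bundle.
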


\begin{proof} 
Suppose $t_n(D)=n-k$. Model $D$  on $$V= \mathbb C^{k} \oplus \mathbb C(-1)^{n-k}, 1\leq k\leq n.$$ 
Let $S$ be the section $\mathbb P_s(\mathbb C)$.
We will show that  a  GW invariant of the curve class $\eta=[S]$  is nonzero. 
Then the inequality $\rho_{\pi}([\omega])>n-k= t_n(D)$ is a consequence of  Lemma \ref{cubic}.

Tensoring $V^*$ by $\mathbb C(1)$, we have $V^*\otimes \mathbb C(1)= \mathbb C(1)^{k} \oplus \mathbb C(2)^{n-k}$. 
Now $S$ corresponds to $S^*$, a quotient line bundle $\mathbb C(1)$ of $V^*\otimes \mathbb C(1)$. 
If $\omega$ is K\"ahler,  the GW invariant of $[S]$ is nonzero by   Lemma \ref{GW'}. 
By Lemma \ref{almost standard}  the space of almost standard forms on $D$ is path connected and contains K\"ahler forms, so $\omega$ is  deformation to a K\"ahler form  and has the same GW invariant. 
\end{proof}

\begin{definition}
Suppose $D$ is a linear $\mathbb P^{n-1}-$bundle over $\Sigma$.   
We define   
  the ratio $\rho_{\pi}(D)$ of the (almost standard) symplectic cone by 
  $$\rho_{\pi}(D)=\inf \{\rho_{\pi}(u)| \hbox{$u$ an almost standard symplectic class} \}.$$
If $D$ is a codimension $2$ submanifold of $M$ and $S\subset M$ is a submanifold disjoint from $D$,  we define the relative ratio 
$$\rho_{\pi} (D; M, S)$$ to be the infimum of $ \rho_{\pi}(u|_D)$ for $u$  a  class of a  symplectic form on $M$ that is  almost standard on $D$ and symplectic on $S$. 
\end{definition}

Clearly,  $\rho_{\pi}(D; M, S)\geq \rho_{\pi} (D)\geq 0.$


\begin{prop} \label{lower bound}
Suppose $D$ is  a linear $\mathbb P^{n-1}-$bundle  over $\Sigma$. Then 
\begin{equation} \label { } 
    \rho_{\pi} (D) = \left\{ \begin{array}{ll}0  & \hbox{if $g(\Sigma)>0$}, \\
t_n(D)       &\hbox{if $g(\Sigma)=0$}.\\
\end{array}
\right.
\end{equation} 
Moreover, any ratio can be realized by a K\"ahler form. 
\end{prop}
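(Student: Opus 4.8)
The plan is to bracket $\rho_\pi(D)$ between a lower bound and an upper bound. For the upper bound I would exhibit explicit holomorphic realizations $\mathbb{P}(\mathcal{E})$ of $D$ whose Kähler cones have already been computed; since by Lemma \ref{almost standard} every Kähler form on such a realization is almost standard, the Kähler cone sits inside the set of almost standard symplectic classes, and its infimal ratio is automatically an upper bound for $\rho_\pi(D)$. For the lower bound I would use the free inequality $\rho_\pi(D)\ge 0$ (recorded after the definition, coming from $u^n>0$ and $\langle u,l\rangle>0$ on the forward cone) when $g(\Sigma)>0$, and the Gromov--Witten input of Lemma \ref{symplectic GW} when $g(\Sigma)=0$.

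When $g(\Sigma)>0$ I would first realize $D$ by a semi-stable bundle. Since semi-stable bundles of rank $n$ and arbitrary degree exist (\cite{Se} for $g\ge 2$, \cite{A} for $g=1$), I can choose a semi-stable $\V$ of rank $n$ with $\deg\V\equiv t_n(D)\pmod n$, so that $\mathbb{P}(\V)$ is diffeomorphic to $D$. By Proposition \ref{stablecubic} its Kähler cone is the whole forward cone, on which $\rho_\pi$ takes every value in $(0,\infty)$ with infimum $0$. Hence $\rho_\pi(D)\le 0$, which together with $\rho_\pi(D)\ge 0$ gives $\rho_\pi(D)=0$; moreover every positive ratio is realized by a Kähler form in this realization.

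When $g(\Sigma)=0$ the lower bound $\rho_\pi(D)\ge t_n(D)$ is exactly Lemma \ref{symplectic GW}, which asserts $\rho_\pi([\omega])>t_n(D)$ for every almost standard form $\omega$. For the matching upper bound I would take the decomposable bundle $\V=\mathcal{O}^{\,n-t}\oplus\mathcal{O}(1)^{\,t}$ with $t=t_n(D)$. Then $\deg\V=t$, so $\mathbb{P}(\V)$ has the correct topological type $t_n(t)=t$, and since its minimal summand degree is $0$, Lemma \ref{completedec} shows the ratio of its Kähler cone equals $\deg\V=t_n(D)$, with every value in $(t_n(D),\infty)$ realized. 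This gives $\rho_\pi(D)\le t_n(D)$, and combined with the lower bound yields $\rho_\pi(D)=t_n(D)$. In both cases the realizations simultaneously prove the ``moreover'' clause, since the Kähler cones realize exactly the ratios in $(\rho_\pi(D),\infty)$.

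The genuinely substantive step is the lower bound in the genus-zero case. It must exclude almost standard symplectic classes whose ratio lies in the gap $(0,t_n(D)]$ --- classes that sit in the forward cone but outside the Kähler cone of every realization --- so no purely cohomological or Kähler-cone argument can suffice. This is precisely why Lemma \ref{symplectic GW} relies on a nonzero Gromov--Witten invariant of the section class $\eta$: being a symplectic deformation invariant, it constrains the entire path-connected family of almost standard forms rather than merely the Kähler ones, and thereby pins the infimum to $t_n(D)$ instead of $0$.
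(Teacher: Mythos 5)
Your proposal is correct and takes essentially the same route as the paper: for $g(\Sigma)>0$ it realizes $D$ by a semi-stable bundle and invokes Proposition \ref{stablecubic} together with the trivial bound $\rho_{\pi}(D)\geq 0$, and for $g(\Sigma)=0$ it combines the Gromov--Witten lower bound of Lemma \ref{symplectic GW} with the K\"ahler cone computation of Lemma \ref{completedec} for a decomposable bundle with minimal summand degree $0$. Your version is merely a bit more explicit than the paper's about the bookkeeping (existence of semi-stable bundles of the required degree, the topological-type identification via $\mathbb P(\V)=\mathbb P_s(\V^*)$, and the role of Lemma \ref{almost standard} in making K\"ahler classes count toward the infimum), all of which the paper leaves implicit.
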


\begin{proof}
When $g(\Sigma)>0$, 
by Proposition  \ref{stablecubic}, for  any  complex structure arising from a semi-stable  bundle, 
the ratio of the K\"ahler cone is $0$.

When the base is $\mathbb P^1$,     $\rho_{\pi}(D)\geq     t_n(D)$ by  Lemma \ref{symplectic GW}.
The reverse inequality relies on the K\"ahler cone computation in  Lemma  \ref{completedec}.
 $D$  can be modeled on a holomorphic bundle $\V$ as in Lemma  \ref{completedec} with $a_1=0$. 
 The ratio of the K\"ahler cone is then $\deg (\V)$  by Lemma  \ref{completedec}. So $\rho_{\pi}(D)\leq     \deg(\V)$. 
Notice that $\mathbb P(\V)=\mathbb P_s(\V^*)$ and  $\deg(\V)=-\deg(\V^*)$.
Recall the topological type for $\mathbb P_s(E)$ is  the smallest non-negative integer  congruent to $-\deg(E)$ modulo $n$. 
Therefore $\rho_{\pi}(D)=t_n(D)$ when the base is $\mathbb P^1$. 
\end{proof}

We are ready to confirm Proposition \ref{almost standard cone}. 

\begin{proof} [Proof of Proposition \ref{almost standard cone}]
Claim  (i)  follows directly from Lemma \ref{almost standard} and Proposition \ref{lower bound}. For Claim (ii) on the symplectic cone when $n$ is odd and $g(\Sigma)>0$, it follows from Proposition \ref{lower bound} once we  note that $-\omega$ is also a symplectic
form compatible with the orientation if $\omega$ is. 
\end{proof}

\begin{prop} \label{relative ratio}  Suppose $V$ has rank $n$ and degree $d$. Let  $K=\mathbb P_s(V\oplus \mathbb C)$, $D=\mathbb P_s(V)$ and $S=\mathbb P_s(\mathbb C)$.  Then 
\begin{equation} \label{rel ratio}
    \rho_{\pi}(D; K, S)=    \left\{ \begin{array}{ll}\max\{0, -\deg(V)\}  & \hbox{if $g(\Sigma)>0$}, \\
\max\{t_n(-\deg V),-\deg(V)\}      &\hbox{if $g(\Sigma)=0$}.\\
\end{array}
\right.
\end{equation} 
Moreover, the  symplectic form on $K$ could be chosen to be  $S^1-$invariant with respect to the natural semi-free $S^1-$action from the splitting $V\oplus \mathbb C$.
\end{prop}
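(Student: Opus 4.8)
The plan is to bracket the relative ratio $\rho_\pi(D;K,S)$ between a lower bound obtained from two purely cohomological constraints and an upper bound obtained from the explicit K\"ahler models of Lemmas \ref{normal p<0 blow-down} and \ref{normal p>0 blow-down}, and then to arrange $S^1$-invariance by averaging. First I would set up coordinates on $K=\mathbb P_s(V\oplus\mathbb C)$, a linear $\mathbb P^n$-bundle over $\Sigma$ with hyperplane class $\tau_K$ and fibre class $F_K$. By Lemma \ref{Xi}(2) the tautological subbundle restricts correctly, so $\tau_K|_D=\tau$ and $F_K|_D=F$; thus every class on $K$ has the form $u=x\tau_K+yF_K$ with $u|_D=x\tau+yF$. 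Applying the last statement of Lemma \ref{cohomology ring} to the degree-zero subbundle $L=\mathbb C$ gives $\langle\tau_K,[S]\rangle=0$, and since $S$ is a section $\langle F_K,[S]\rangle=1$; by uniqueness of $\eta$ this identifies $[S]=\eta_K$, whence $\langle u,[S]\rangle=y$.

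For the lower bound, let $\Omega$ be any symplectic form on $K$ that is almost standard on $D$ and symplectic on $S$, and put $u=[\Omega]$. Almost standardness forces $u|_D$ into the forward cone, so $x=\langle u|_D,l\rangle>0$, and Proposition \ref{lower bound} gives $\rho_\pi(u|_D)\ge\rho_\pi(D)$, which is $0$ when $g(\Sigma)>0$ and $t_n(-\deg V)$ when $g(\Sigma)=0$. Symplecticity on $S$ gives $y=\langle u,[S]\rangle>0$, and then Lemma \ref{cubic} yields $\rho_\pi(u|_D)=-\deg V+n\,y/x>-\deg V$. Hence every competitor satisfies $\rho_\pi(u|_D)>\max\{\rho_\pi(D),\,-\deg V\}$, so $\rho_\pi(D;K,S)$ is bounded below by the right-hand side of \eqref{rel ratio}. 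I regard this section constraint, which converts the positivity $\langle u,[S]\rangle>0$ into the sharp inequality $\rho_\pi>-\deg V$, as the genuinely new input.

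For the upper bound I would observe that K\"ahler forms are legitimate competitors: on any holomorphic model $D$ is a complex submanifold (so $\Omega|_D$ is K\"ahler, hence almost standard by Lemma \ref{almost standard}) and $S$ is a holomorphic section (so $\Omega|_S$ is symplectic). Passing to the quotient convention, $K=\mathbb P(V^*\oplus\mathcal O)$ and $D=\mathbb P(V^*)$ with normal type $\deg V^*=-\deg V$. Over a Riemann surface a smooth bundle is fixed by its rank and degree, so I am free to choose the holomorphic structure on $V^*$ within its smooth type, and every resulting K\"ahler class is a symplectic class on the fixed smooth $K$. When $-\deg V<0$ I invoke Lemma \ref{normal p<0 blow-down}, realizing $\rho(V^*,V^*\oplus\mathcal O)=0$ for $g>0$ and $t_n(-\deg V)$ for $g=0$; when $-\deg V\ge 0$ I invoke Lemma \ref{normal p>0 blow-down}, realizing $\rho(V^*,V^*\oplus\mathcal O)=-\deg V$. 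A routine case check shows each value equals $\max\{\rho_\pi(D),\,-\deg V\}$, giving $\rho_\pi(D;K,S)\le$ the right-hand side of \eqref{rel ratio}, and combined with the lower bound this forces equality.

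Finally, for the moreover clause I note that the natural semi-free $S^1$-action scaling $V$ and fixing $\mathbb C$ acts holomorphically on each model $\mathbb P(V^*\oplus\mathcal O)$, with $D$ and $S$ as its only fixed loci (cf. Lemma \ref{triple}). Averaging any K\"ahler form over the compact connected group $S^1$ produces an $S^1$-invariant K\"ahler form in the same cohomology class, so the competitors approaching the infimum may be taken $S^1$-invariant without changing their ratio. The main obstacle I anticipate is purely organizational: keeping the two projectivization conventions ($\mathbb P_s$ versus $\mathbb P$) consistent so that the normal type $-\deg V$, the section class $[S]=\eta_K$, and the target values all line up across the four cases. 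Once that bookkeeping is in place, the lower bound is the essential content and the upper bound is an assembly of the cone computations already established.
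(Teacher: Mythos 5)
Your proposal is correct and follows essentially the same route as the paper: the lower bound comes from identifying $[S]$ with the $\eta$ class so that symplecticity on $S$ forces positivity on $\eta_D$ (hence $\rho_\pi > -\deg V$ via Lemma \ref{cubic}), combined with $\rho_\pi(D;K,S)\ge\rho_\pi(D)$ and Proposition \ref{lower bound}; the upper bound comes from the K\"ahler models of Lemmas \ref{normal p<0 blow-down} and \ref{normal p>0 blow-down}; and $S^1$-invariance is obtained by averaging a K\"ahler form, using convexity of the space of K\"ahler forms. The only cosmetic difference is that you carry out the $\eta$-class identification in explicit $\{\tau_K, F_K\}$ coordinates, whereas the paper pushes $\eta_D$ forward under the inclusion $D\to K$.
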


\begin{proof} 
Let $\omega$ be  a  symplectic form on $K$ that is   almost standard on $D$ and symplectic on $S$. 
Since the  $\eta$ class $\eta_D$ of the linear $\mathbb P^{n-1}$ bundle  $D=\mathbb P_s(V)$ is sent to the $\eta$ class $\eta_K$  of the 
$\mathbb P^{n}$ bundle $K=\mathbb P_s(V\oplus \mathbb C)$ under the inclusion $D\to K$ and $\eta_K$   is represented by the symplectic section $S$,   $\omega|_D$ is positive on $\eta_D$. 

Therefore we have the inequality  $$\rho_{\pi}(D; K, S)\geq  -\deg(V)$$ by Lemma \ref{cubic}. 
The equality \eqref{rel ratio} then follows from the K\"ahler constructions on $\mathbb P(\mathcal V\oplus \mathcal O)$ in Lemmas \ref{normal p<0 blow-down} and \ref{normal p>0 blow-down}.

It remains to  verify the last statement. 
The holomorphic bundle $\mathbb P(\mathcal V\oplus \mathcal O)$ has a natural semi-free holomorphic $S^1-$action induced from the splitting $\mathcal V\oplus \mathcal O$ that fixes both $D$ and $S$.  
For each element $g$ of this $S^1$ automorphism of $\mathbb P(\mathcal V\oplus \mathcal O)$, 
$g^*\Omega$ is still  a K\"ahler form in the class $[\Omega]$. 
As the space of K\"ahler forms is convex, take average of $g^*\Omega$ with respect to  the $S^1-$action, we have a K\"ahler form $\Omega'$ in class $[\Omega]$ which is invariant under this $S^1-$action. 
\end{proof}

\section{Proof of theorems}
Finally, we prove Theorem \ref{main}, Theorem \ref{coh matching} and Theorem \ref{ruled}.

\begin{proof} [Proof of Theorem \ref{main}]
We have a symplectic divisor $\pi:D^{2n}\to \Sigma$ of $ (M^{2n+2}, \omega)$ arising from  symplectically    blowing up a symplectic surface $\Sigma$ 
in a symplectic manifold.
Observe that from the symplectic cut description of symplectic blowing up, $(M, \omega)=(\overline X^-, \omega^-)$ and $D=Z^-$.
On the $\overline X^+$ side,  we have a projective bundle triple by Lemma \ref{triple},  the $\mathbb P^{k}-$bundle $\overline X^+=\mathbb P_s(N_{\Sigma}\oplus {\mathbb C})$, 
 the $\mathbb P^{k-1}-$bundle $Z^+=\mathbb P_s(N_{\Sigma}\oplus 0)$ and  a copy of  $\Sigma$ which is the infinity  section ${\mathbb P}_s(0\oplus {\mathbb C})$.
Moreover, $c_1(N_D)=-c_1(N_{Z^+})$. 
By Lemma \ref{Xi}(3) snd Lemma \ref{cohomology ring}, $ \int_D c_1(N_{Z^+})^n=-\deg(N_\Sigma)$.
Therefore 
$c_1(N_D)$ satisfies
$$ \int_D (-1)^n c_1(N_D)^n=-\deg(N_\Sigma).$$

It  remains to prove the ratio inequality
\begin{equation}
\label {constraint'}  
    \rho_{\pi} ([\omega|_D]) >
     \left\{ \begin{array}{ll} -\deg(N_{\Sigma}), & \hbox{if $g(\Sigma)>0$}, \\
\max\{t_n(-\deg N_{\Sigma}),  -\deg(N_{\Sigma})\},       &\hbox{if $g(\Sigma)=0$}.\\
\end{array}
\right.
\end{equation} 
Again we  can resort to the $\overline X^+$ side since 
\begin{equation}\label{D and Z+}
 \rho_{\pi}([\omega|_D])=\rho_{\pi^-}([\omega^-|_{Z^-}])=\rho_{\pi^+}([\omega^+|_{Z^+}]),
\end{equation}
where $\pi^{\pm}:Z^{\pm}\to Y$ is the projection. Note  that $[\omega^+]$ pairs positively with the $\eta$  class of the $\mathbb P^{k}-$bundle $X^+$, which is represented by the symplectic section ${\mathbb P}_s(0\oplus {\mathbb C})$. 
Since the $\eta$ class of the $\mathbb P^k$-bundle $X^+$ is also the $\eta$ class of the $\mathbb P^{k-1}-$subbundle $Z^+$, $[\omega^+|_D]$ pairs positively with the $\eta$ class of the $\mathbb P^{k-1}-$bundle $Z^+$.

Note that the $\mathbb P^{k-1}-$bundles   $Z^+$ is  modeled on $N_{\Sigma}$. By \eqref{D and Z+}, the ratio inequality \eqref{constraint'} 
 follows from Lemma \ref{cubic} when $g(\Sigma)>0$ and 
 Lemmas \ref{cubic} and \ref{symplectic GW} when $g(\Sigma)=0$. 
\end{proof}

\begin{proof}[Proof of Theorem \ref{coh matching}]
Here we have 
 a topological exceptional divisor  $\pi: (D, \omega|_D)\to \Sigma$ of $(M, \omega)$
 satisfying the ratio bound  
\begin{equation} \label {ratio constraint} 
    \rho_{\pi} ([\omega|_D]) >
     \left\{ \begin{array}{ll}\alpha_{D, M},  & \hbox{if $g(\Sigma)>0$}, \\
\max\{\alpha_{D, M}, \quad t_n(\alpha_{D, M})\},      &\hbox{if $g(\Sigma)=0$}.\\
\end{array}
\right.
\end{equation} 
where $\alpha_{D, M}=\int_D (-1)^n c_1(N_D)^n$.

 We model the linear $\mathbb P^{k-1}-$bundle $D$  by a complex rank $k$ vector bundle $V$ over $\Sigma$ with $\deg V=-\alpha_{D, M}$. 
 Let  $$K=\mathbb P_s(V\oplus \mathbb C), \quad D'=\mathbb P_s(V), \quad S=\mathbb P_s(\mathbb C)$$ as in Proposition \ref{relative ratio}.
  Since $[\omega|_D]$ satisfies the ratio inequality
 \eqref{ratio constraint}, 
there exists an $S^1-$equivariant symplectic form $\Omega$ on $K$ such that $[\Omega|_{D'}]=[\omega|_D]$
by Proposition \ref{relative ratio}. 
Clearly, the triple $(K, D', S; \Omega)$ is a desired weak matching triple. 
\end{proof}



Since  Theorem \ref{ruled} has several statements we restate   it here and deduce it from Theorem \ref{main} and Proposition \ref{dim 6}. 
 
\begin{theorem}  Let $(M, \omega)$ be a $6-$dimensional symplectic manifold and $D$ a codimension $2$ symplectic submanifold.
 Suppose $D$ admits a  $\mathbb P^1-$bundle structure $\pi:D\to \Sigma$   over a surface $\Sigma$ with $\langle c_1(N_D), l\rangle =-1$. 
Let $$\alpha_{D, M}=c_1(N_D)\cdot c_1(N_D)\quad \hbox{ and} \quad  \rho=\rho_{\pi}([\omega|_D]).$$
Suppose  $(D, \omega|_D)$ arises from   blowing up a surface.  Then $\rho \ne 2$ if   $D= S^2\times S^2$ with $\alpha_{D, M}=2$,  
and $\rho>\alpha_{D, M}$  otherwise.

Conversely,  if  $\rho>\alpha_{D, M}$,  
$(M, \omega)$ can be 
blown down  along $D$ up to deformation.   In particular, this is  the case if $\alpha_{D, M}\leq  0$.
Moreover, when  $D=S^2\times S^2$ with $\alpha_{D, M}=2$, $(M, \omega)$ can be  
blown down  along $D$ up to deformation as long as $\rho\ne 2$.

 \end{theorem}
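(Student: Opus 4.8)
The plan is to derive both directions from results already established, using Theorem \ref{main} for necessity and Proposition \ref{dim 6} (which in turn rests on Theorem \ref{coh matching}) for sufficiency, working throughout in the $6$-dimensional case $n=2$. First I would record the basic dictionary. Since $D\to\Sigma$ is a $\mathbb{P}^1$-bundle it is modeled on a rank $2$ bundle $V$, its topological type is $t_2(D)=t_2(-\deg V)$, and Theorem \ref{main}(i) identifies $\alpha_{D,M}=\int_D c_1(N_D)^2=-\deg(N_\Sigma)$. Writing $c_1(N_D)=-\tau+bF$ (forced by $\langle c_1(N_D),l\rangle=-1$) and applying Lemma \ref{cubic}, one checks $\alpha_{D,M}=-\deg V-2b\equiv -\deg V\pmod 2$, so that $t_2(\alpha_{D,M})=t_2(D)$. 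This small computation is what lets the abstract ratio bounds of Theorem \ref{main} and Definition \ref{con blowup} be read off directly in terms of $\alpha_{D,M}$.

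For the necessity direction, suppose $(D,\omega|_D)$ arises from blowing up a surface. The exceptional ruling is the one along which $N_D$ has fiber degree $-1$. For $g(\Sigma)>0$, and for $g(\Sigma)=0$ whenever $D$ is not $S^2\times S^2$ with $\alpha_{D,M}=2$, this ruling is unique and must be the given $\pi$, so Theorem \ref{main}(ii) applies verbatim and gives $\rho>\max\{\alpha_{D,M},t_2(\alpha_{D,M})\}\ge\alpha_{D,M}$, hence $\rho>\alpha_{D,M}$. The genuinely exceptional configuration is $D=S^2\times S^2$ with $c_1(N_D)=(-1,-1)$, i.e. $\alpha_{D,M}=2$, where $N_D$ is tautological along both rulings, so the blow-up may have been performed along the other ruling $\pi'$. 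I would handle this by the elementary area computation: if the two $S^2$ factors have $\omega$-areas $a_1,a_2$, then the ratios for the two rulings are $2a_1/a_2$ and $2a_2/a_1$, whence $\rho_\pi\,\rho_{\pi'}=4$. Theorem \ref{main}(ii) applied to the actual exceptional ruling forces its ratio to exceed $2$, hence $a_1\ne a_2$, and therefore $\rho\ne 2$ for either choice of ruling, which is the asserted dichotomy.

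For the sufficiency direction, the goal is to verify that $\pi:D\to\Sigma$ is \emph{admissible} in the sense of Definition \ref{con blowup} and then quote Proposition \ref{dim 6}. When $g(\Sigma)>0$, admissibility is literally the hypothesis $\rho>\alpha_{D,M}$. When $g(\Sigma)=0$, admissibility demands $\rho>\max\{\alpha_{D,M},t_2(\alpha_{D,M})\}$; the extra term is supplied for free, because $\omega|_D$ is a fibred, hence almost standard, form on a $\mathbb{P}^1$-bundle over $\mathbb{P}^1$, so Lemma \ref{symplectic GW} gives $\rho>t_2(D)=t_2(\alpha_{D,M})$ automatically. Combining this with $\rho>\alpha_{D,M}$ yields admissibility in every case, and Proposition \ref{dim 6} (which produces a weak matching triple via Theorem \ref{coh matching} and upgrades it to a matching triple in dimension $6$) then lets us blow down up to deformation. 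The clause $\alpha_{D,M}\le 0$ is immediate since $\rho\in\mathbb{R}^+$ forces $\rho>0\ge\alpha_{D,M}$. For $S^2\times S^2$ with $\alpha_{D,M}=2$, the condition $\rho\ne 2$ means $a_1\ne a_2$, so by the relation $\rho_\pi\rho_{\pi'}=4$ one of the two rulings has ratio exceeding $2=\alpha_{D,M}$; blowing down along that ruling is then permitted, which is precisely the flop picture described after the statement.

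I expect the only real subtlety, rather than a deep obstacle, to be the bookkeeping around $\Sigma=S^2$: one must match the topological type $t_2(D)$ with $t_2(\alpha_{D,M})$ so that the bounds align, and treat the symmetric divisor $S^2\times S^2$ with $c_1(N_D)=(-1,-1)$ separately, where the ambiguity of the ruling is exactly what replaces ``$\rho>\alpha_{D,M}$'' by ``$\rho\ne 2$''. All the analytic weight is carried by Theorem \ref{main} and Proposition \ref{dim 6}; the work here is to assemble them correctly and to supply the automatic Gromov--Witten lower bound $\rho>t_2(D)$ from Lemma \ref{symplectic GW}, which is what closes the gap between the hypothesis and admissibility when $g(\Sigma)=0$.
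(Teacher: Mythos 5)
Your proposal is correct and follows essentially the same route as the paper: necessity via Theorem \ref{main} (with the two-ruling area computation, equivalently your relation $\rho_\pi\rho_{\pi'}=4$, handling $S^2\times S^2$ with $\alpha_{D,M}=2$), and sufficiency by verifying admissibility — using Lemma \ref{symplectic GW} to supply the $t_2$ bound when $g(\Sigma)=0$ — and then invoking Proposition \ref{dim 6}. The only cosmetic difference is that you organize the argument by direction while the paper organizes it by genus and the parity of $\alpha_{D,M}$, and you make explicit the parity identity $t_2(\alpha_{D,M})=t_2(D)$ that the paper uses implicitly.
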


\begin{proof} 
First of all, $(D, \omega|_D)$ is a topological exceptional divisor since any $\mathbb P^1-$bundle over $\Sigma$  is linear and any symplectic form is fibred and almost standard.  

We first assume $g(\Sigma)>0$. 
If $(D, \pi, \omega|_D)$ is a (symplectic) exceptional divisor, then  $\rho>\alpha_{D, M}$ by Theorem \ref{main}. 
Conversely, 
 $(D, \omega)$ is admissible if $\rho>\alpha_{D, M}$. 
By Proposition \ref{dim 6}, up to integral deformation,  $D$ can be symplectically blow down.

Suppose $g(\Sigma)=0$. There are two cases, $\alpha_{D,M}$ is even or $\alpha_{D, M}$ is odd. 
When $\alpha_{D, M}$ is odd,  
$D$ is the unique non-trivial $\mathbb P^1-$bundle over $\mathbb P^1$, which 
is also the 1 point blowup of $\mathbb P^2$. 
In this case, $t_n(\alpha_{D, M})=1$ and 
$\rho>1$ for any symplectic form on $D$. 
So, as in the case $g(\Sigma)>0$,  
$\rho>\alpha_{D, M}$ if $(D, \pi, \omega|_D)$ is a (symplectic) exceptional divisor, 
and conversely, 
 $(D, \omega)$ is admissible  and hence can be symplectically blow down up to integral deformation if $\rho>\alpha_{D, M}$.

When $\alpha_{D, M}$ is even,  $D=S^2\times S^2$ and $t_n(\alpha_{D, M})=0$. 
There are two $\mathbb P^1$ bundle structures on $D$. 
Accordingly we further divide into  two cases, $\alpha_{D, M}\ne 2$ and $\alpha_{D, M}=2$. 
When $\alpha_{D, M}\ne 2$, there is only one fibration with the appropriate normal bundle condition and
the admissible condition is again  $\rho>\alpha_{D, M}$. So the conclusion is exactly  the same as the $g(\Sigma)>0$ case.

When   $\alpha_{D, M}=2$,  the evaluation of $c_1(N_D)$  is $-1$ on  both  fiber classes. 
Suppose the symplectic areas of the two rulings are $x$ and $y$. 
Then the symplectic volume is $2xy$. 
The ratio of $\omega|_D$ with respect to the first ruling is $2xy/x^2=2y/x$, which is bigger than $\alpha_{D, M}=2$ if and only if  $y>x$. Similarly, 
the ratio of $\omega|_D$ with respect to the second  ruling  is bigger than $\alpha_{D, M}=2$ if and only if  $x>y$.
Therefore $\rho\ne 2$ if $D$ is a symplectic exceptional divisor. Conversely, if $\rho\ne 2$, up to integral deformation
we could blow down $(M, \omega)$ along $D$ with respect to the fibers with smaller area. 
\end{proof}

\end{document}